\documentclass[11pt]{article}

\usepackage{etoolbox}

\newtoggle{SIOPT}


\newtoggle{easyexamples}


\usepackage{jcd}

\usepackage{mathtools}
\usepackage{color}
\usepackage{comment}
\usepackage{xspace}
\usepackage{tikz}

\usepackage{overpic}

\usepackage[title]{appendix}

\definecolor{darkblue}{rgb}{0,0,.75}
\usepackage{graphicx}
\usepackage[colorlinks=true,linkcolor=darkblue,%
  citecolor=darkblue,urlcolor=darkblue]{hyperref}
\usepackage[numbers]{natbib}

\usepackage{enumitem}
\usepackage[top=2.54cm,right=3cm,left=3cm,bottom=2.54cm]{geometry}

\newcommand{\statrv}{S}
\newcommand{\statval}{s}
\newcommand{\statdomain}{\mc{S}}
\newcommand{\stepsize}{\alpha}
\newcommand{\opt}{^\star}

\newcommand{\lambdamin}{\lambda_{\min}}
\newcommand{\lambdamax}{\lambda_{\max}}

\newcommand{\gapfunc}{\Gamma}

\newcommand{\growfunc}{\mathsf{G}_{\rm big}}

\providecommand{\steppow}{\beta}

\newcommand{\lipobj}{M}
\newcommand{\lipgrad}{L}
\newcommand{\poissondist}{\mathsf{Poisson}}

\newcommand{\aProx}{\textsc{aProx}\xspace}

\newcommand{\stabilityfunc}{C}
\newcommand{\sphere}{\mathbb{S}}
\newcommand{\ball}{\mathbb{B}}

\begin{document}

\begin{center}
  {\Large Stochastic (Approximate) Proximal Point Methods: \\ Convergence,
    Optimality, and Adaptivity\footnote{Research partially supported by
      NSF-CAREER Award 1553086, ONR-YIP N00014-19-1-2288, and the Sloan
      Foundation.}}
  
  \vspace{.3cm}
  
  \begin{tabular}{cc}
    Hilal Asi & John C.\ Duchi \\
    \texttt{asi@stanford.edu} &
    \texttt{jduchi@stanford.edu} \\
    \multicolumn{2}{c}{
      Stanford University
    } \\
  \end{tabular}
\end{center}

\begin{abstract}
  We develop model-based methods for solving stochastic convex optimization
  problems, introducing the approximate-proximal point, or \aProx, family,
  which includes stochastic subgradient, proximal point, and bundle
  methods. When the modeling approaches we propose are appropriately
  accurate, the methods enjoy stronger convergence and robustness guarantees
  than classical approaches, even though the model-based methods typically
  add little to no computational overhead over stochastic subgradient
  methods.  For example, we show that improved models converge with
  probability 1 and enjoy optimal asymptotic normality results under weak
  assumptions; these methods are also adaptive to a natural class of what we
  term easy optimization problems, achieving linear convergence under
  appropriate strong growth conditions on the objective. Our substantial
  experimental investigation shows the advantages of more accurate modeling
  over standard subgradient methods across many smooth and non-smooth
  optimization problems.
\end{abstract}



\section{Introduction}
\label{sec:intro}

In this paper, we develop and analyze a family of model-based methods,
moving beyond naive stochastic gradient methods, for solving the stochastic
convex optimization problem
\begin{equation}
  \label{eqn:objective}
  \begin{split}
    \minimize ~ & F(x) = \E_P[f(x;\statrv)]
    = \int_\statdomain f(x; \statval) dP(\statval) \\
    \subjectto ~ & x \in \mc{X}.
  \end{split}
\end{equation}
In problem~\eqref{eqn:objective}, the set $\statdomain$ is a sample space,
and for each $\statval \in \statdomain$, the function $f(\cdot; \statval) :
\R^n \to \R$ is a closed convex function, subdifferentiable
on the closed convex set $\mc{X} \subset \R^n$.

Stochastic minimization problems, in which an optimizer has
access to samples $\statrv_i$ drawn independently and identically
distributed from $P$ and uses these samples to minimize $F$, have
applications in numerous fields, including machine learning, statistical
estimation, and simulation-based optimization~\cite{Zhang04,
  HastieTiFr09, ShapiroDeRu09}. The current accepted
methodology for such problems is the stochastic (sub)gradient
method~\cite{Zinkevich03, Zhang04, NemirovskiJuLaSh09, BottouBo07,
  ShalevSiSrCo11}, which \citet{RobbinsMo51} originally developed for smooth
stochastic approximation problems, which iterates as follows: beginning
at
an initial point $x_1$, iteratively draw $\statrv_k \simiid P$
and update
\begin{equation}
  \label{eqn:sgm}
  x_{k + 1} \defeq x_k - \stepsize_k g_k ~~ \mbox{for~some}~
  g_k \in \partial f(x_k; \statrv_k).
\end{equation}
The stochastic gradient method enjoys convergence 
guarantees~\cite{Zinkevich03, NemirovskiJuLaSh09} and widespread empirical
success in large-scale convex and non-convex stochastic
optimization~\cite{Zhang04, BottouBo07, ShalevSiSrCo11,
  DeanCoMoChDeMaRaSeTuYaNg12, LeCunBeHi15}.  In spite of this success, there
are notable difficulties with the stochastic subgradient
method~\eqref{eqn:sgm}: it is sensitive to stepsize selection; it can
diverge on objectives, such as $F(x) = x^4$, that do not obey its
convergence criteria; and it is rarely adaptive to nuanced aspects
of problem difficulty. Engineers thus waste time and computation dealing
with these issues and finding appropriate stepsizes, which cascades into
additional practical challenges.

An alternative to treating the stochastic gradient method~\eqref{eqn:sgm}
(SGM) as a noisy approximation to gradient descent is to view it as
minimizing a sequence of random \emph{models} of the functions $F$ and $f$,
and we leverage this view here. In this context, SGM
makes a linear approximation to the instantaneous function $f$
around the point $x_k$, setting
\begin{equation*}
  f_{x_k}(x; \statrv_k) \defeq
  f(x_k; \statrv_k) + \<g_k, x - x_k\>
\end{equation*}
and choosing $x_{k+1}$ to minimize the regularized model $f_{x_k}(x;
\statrv_k) + \frac{1}{2 \stepsize_k} \ltwos{x - x_k}^2$. More sophisticated
models are plausible. Most familiar is the stochastic proximal point
method~\cite{Rockafellar76, KulisBa10, Bertsekas11, KarampatziakisLa11,
  Bianchi16} (the least-mean-squares algorithm~\cite{WidrowHo60} for
quadratic $f$), which makes no approximation, using $f_{x_k}(x; \statval) =
f(x; \statval)$ and iterating
\begin{equation}
  \label{eqn:sppm}
  x_{k+1} = \argmin_{x \in \mc{X}}
  \left\{ f(x ; \statrv_k) + \frac{1}{2 \stepsize_k}
  \ltwo{x - x_k}^2 \right\}.
\end{equation}
This modeling perspective is important in non-stochastic optimization, where
(for example) Newton, Gauss-Newton, bundle, and trust-region
methods~\cite[e.g.][]{HiriartUrrutyLe93ab, BoydVa04, Nesterov04,
  NocedalWr06} explicitly build sequences of easier-to-minimize models while
minimizing the global function $F$.  A substantial body of work investigates
this modeling perspective~\cite{Burke85,
  DrusvyatskiyLe18}, and recent work by \citet{DuchiRu18c} and
\citet{DavisDr19} demonstrates
convergence for appropriate models in weakly convex stochastic
optimization, motivating our approach.

We show how to extend this modeling perspective to stochastic convex
optimization problems, leveraging it to build a new family of algorithms for
solving problem~\eqref{eqn:objective}, which, in homage to the stochastic
proximal point iteration~\eqref{eqn:sppm}, we call the \aProx (approximate
proximal point) algorithms, with substantially better theoretical guarantees
and empirical performance than naive stochastic subgradient methods. The
\aProx algorithms iterate as follows: for $k = 1, 2, \ldots$, we draw a
random $\statrv_k \simiid P$, then update the iterate $x_k$ by minimizing a
regularized approximation to $f(\cdot; \statrv_k)$, setting
\begin{equation}
  \label{eqn:model-iteration}
  x_{k+1} \defeq \argmin_{x \in \mc{X}}
  \left\{ f_{x_k}(x ; \statrv_k) + \frac{1}{2 \stepsize_k}
  \ltwo{x - x_k}^2 \right\}.
\end{equation}
The function $f_x(\cdot ; \statval)$ is a \emph{model} of $f(\cdot;
\statval)$ at the point $x$, meaning that $f_x$ satisfies the
following conditions on its structure and local approximation
properties for $f$:
\begin{enumerate}[label=(C.\roman*),leftmargin=*]
\item \label{cond:convex-model}
  The function $y \mapsto f_x(y; \statval)$ is convex
  and subdifferentiable on $\mc{X}$.
\item \label{cond:lower-model}
  The model $f_x$ satisfies the equality
  $f_x(x; \statval) = f(x; \statval)$ and
  \begin{equation*}
    f_x(y; \statval) \le f(y; \statval) ~~ \mbox{for~all~} y.
  \end{equation*}
\end{enumerate}
\noindent
By the first-order conditions for convexity, for any $g \in \partial_y
f_x(y; \statval)|_{y=x}$ Conditions~\ref{cond:convex-model}
and~\ref{cond:lower-model} imply $f(y; \statval) \ge f_x(y; \statval) \ge
f_x(x; \statval) + \<g, y - x\> = f(x; \statval) + \<g, y - x\>$, yielding
the containment
\begin{equation}
  \label{eqn:subgrad-containment}
  \left.\partial_y f_x(y; \statval)\right|_{y = x}
  \subset \partial_x f(x; \statval).
\end{equation}
\citet{DavisDr19} and
\citet{DuchiRu18c} consider similar modeling conditions, and they inspire
our treatment here. See Section~\ref{sec:aProx} and
Figure~\ref{fig:model-illustrations} for examples.

The \aProx methodology~\eqref{eqn:model-iteration} is flexible in that it
allows many possible modeling choices.  As we shall see, though stochastic
gradient~\eqref{eqn:sgm} and proximal point~\eqref{eqn:sppm} methods are
both special cases, they possess quite different behavior.  Thus, it is
interesting to provide conditions on the accuracy of the models $f_x$, in
addition to \ref{cond:convex-model}--\ref{cond:lower-model}, that imply
stronger convergence guarantees than those available for stochastic gradient
and other simple methods. To describe our contributions at a high level, we
list two assumptions that we frequently make. As $f(\cdot; \statval)$ is
subdifferentiable on $\mc{X}$, there exist measurable selections $f'(x;
\statval) \in \partial f(x; \statval)$, and $\E[\partial f(x; \statrv)] =
\partial F(x)$ for $x \in \mc{X}$ (cf.~\cite[Sec.~2]{Bertsekas73}).

\begin{assumption}
  \label{assumption:convex}
  The set $\mc{X}\opt \defeq \argmin_{x \in \mc{X}} \{F(x)\}$ is non-empty,
  and there exists $\sigma^2 < \infty$
  such that $\E[\ltwo{f'(x\opt;\statrv)}^2] \leq \sigma^2$
  for $x\opt \in \mc{X}\opt$ 
  and all measurable selections $f'(x\opt; \statval) \in \partial
  f(x\opt; \statval)$.
\end{assumption}

\begin{assumption}
  \label{assumption:very-weak-moment}
  There exists a non-decreasing function $\growfunc : \R_+ \to
  \openright{0}{\infty}$ such that for all $x \in
  \mc{X}$ and measurable selections $f'(x; \statval) \in \partial
  f(x; \statval)$,
  $\E[\norm{f'(x; \statrv)}^2] \le \growfunc(\dist(x, \mc{X}\opt))$.
\end{assumption}
\noindent
Assumption~\ref{assumption:very-weak-moment} makes no restrictions on the
growth of the function $\growfunc$, so the second moment of the subgradient
$f'(x; \statrv)$ may grow arbitrarily.  This contrasts with typical
assumptions for stochastic subgradient methods~\cite[e.g.][]{Zinkevich03,
  NemirovskiJuLaSh09} which assume uniform boundedness or second-moment
conditions on subgradients.  Within this context, we take three
thrusts.
\begin{enumerate}[label=\arabic*.,leftmargin=*]
\item First, in Section~\ref{sec:stability}, we develop conditions for the
  \emph{stability} of iterates~\eqref{eqn:model-iteration} under
  Assumption~\ref{assumption:convex}, meaning that they remain in a
  bounded neighborhood of the optimal solution set $\mc{X}\opt$ of
  problem~\eqref{eqn:objective}.  We leverage this stability to prove
  convergence for the \aProx
  iteration~\eqref{eqn:model-iteration} even for functions with substantial
  variation in their gradient estimates (e.g.\ the gradient may grow
  super-exponentially in $\norm{x}$) to which standard
  results do not apply.  As a consequence, we
  extend \citeauthor{PolyakJu92}'s analysis of
  averaged stochastic gradient methods to all \aProx
  models~\eqref{eqn:model-iteration}---showing asymptotic
  normality with optimal covariance under weaker conditions than
  those necessary for classical situations---so long as the
  iterates are bounded, highlighting the importance of this stability.
\item In our second thrust, in Section~\ref{sec:aProx-adv-fast-conv}, we
  study the performance of \aProx methods~\eqref{eqn:model-iteration} for
  what we term \emph{easy} problems. In these problems there exists a shared
  minimizer $x\opt$ common to all the sampled functions. This assumption is
  strong, yet many problems are easy: in statistical learning,
  \citet{BelkinRaTs18, BelkinHsMi18} show that functions that
  \emph{perfectly interpolate} the observed data (suffering zero loss on the
  observations) can achieve optimal statistical convergence;
  Kaczmarz algorithms solve consistent over-parameterized linear
  systems~\cite{StrohmerVe09,NeedellWaSr14, NeedellTr14};
  the problem of finding a point in the intersection of convex sets assumes
  there exists a point in each of them~\cite{LeventhalLe10,
    BauschkeBo96}.  By incorporating a simple lower bound
  condition---basically, that if $f$ is non-negative, any model $f_x$ should
  also be non-negative---in addition to
  conditions~\ref{cond:convex-model}--\ref{cond:lower-model}, we show how
  \aProx adapts to these easy problems and achieves (near) linear
  convergence, even in stochastic settings,
  using methods with no additional computational complexity
  beyond stochastic gradient methods.
\item Finally, in Section~\ref{sec:everything-is-fine}, we present
  representative non-asymptotic convergence guarantees.
  Any \aProx method~\eqref{eqn:model-iteration} recovers standard
  convergence guarantees of stochastic gradient
  and proximal point methods~\cite{NemirovskiJuLaSh09, Bertsekas11}
  (see \cite{DavisDr19} for the basic convergence guarantee).
  We show how stochastic proximal point methods enjoy fast convergence
  under (restricted) strong convexity with only weak moment conditions on
  $\partial f(x;\statval)$, further emphasizing the advantages of accurate
  modeling.
\end{enumerate}

In addition to our theoretical results, we perform substantial simulations.
Our experiments consider a wide range
of smooth, non-smooth, and super-polynomially-growing convex problems:
regression with squared and absolute losses, logistic and poisson
regression, and projection problems onto intersections of halfspaces
(relating these to classification problems). The common refrain in each
of these is that even slightly improved \aProx models are much
more robust to stepsize choice than stochastic
gradient methods, and more careful modeling allows fast convergence in
a much broader range of problems, including those with moderately poor
conditioning where stochastic gradient methods fail.

\subsection{Related work}

We situate our paper in relation to classical and modern
work on stochastic optimization problems.  Stochastic gradient methods are
classical, beginning with the development by \citet{RobbinsMo51}
in the 1950s~\cite{Polyak87, PolyakJu92, Zinkevich03,
  NemirovskiJuLaSh09, Zhang04, KushnerYi03}. A number of authors recognize
the challenges associated with stepsize selection and instability of
stochastic gradient methods: in the case of smooth strongly convex
minimization, \citet{NemirovskiJuLaSh09} show how a slightly mis-specified
stepsize can cause arbitrarily slow convergence guarantees. More
recent work (e.g.~\cite{BachMo11}) shows
that even when assumptions sufficient for
convergence hold, stochastic gradient methods can exhibit transient
divergent behavior.

In effort to alleviate some of these issues, there is recent
work on more careful approaches to stochastic optimization problems.  Of
most relevance to our work are \emph{stochastic proximal point
  methods}~\eqref{eqn:sppm}, which use the true function $f_x(y; \statval) =
f(y; \statval)$ in the iteration~\eqref{eqn:model-iteration}.
\citet{Bertsekas11} analyzes stochastic proximal point algorithms in an
incremental framework (when $\statdomain = \{1, \ldots, m\}$ is a
finite set), showing convergence results similar to subgradient methods,
while \citet{KulisBa10} and \citet{KarampatziakisLa11} give theoretical and
empirical results in online convex optimization settings, demonstrating
regret bounds similar to classical results~\cite{Zinkevich03}. Toulis
\emph{et al.}~\cite{ToulisAi17,ToulisTrAi16} study stochastic proximal point
algorithms and convergence guarantees for their final iterates, a different
approach than we take. Their results, however, assume that the functions
under consideration are both globally Lipschitz and globally strongly
convex, a contradiction, severely limiting the applicability of
their results. (Their analysis explicitly and
frequently uses both assumptions;
under weaker assumptions, their convergence
guarantees exhibit the same potential for exponential divergence
of stochastic gradient methods.)
\citet{PatrascuNe17} also analyze stochastic proximal point
algorithms, providing non-asymptotic convergence results under the
assumption that each function $f(\cdot; \statval)$ is Lipschitz or
strongly convex~\cite[Assumptions 1 \& 9]{PatrascuNe17}; these assumptions
fail for many problems (including linear regression with
$f(x; (a, b)) = \half (\<a, x\> - b)^2$), though their
results also apply to interesections of sets
$\mc{X} = \mc{X}_1 \cap \cdots \cap \mc{X}_m$.

\citet{RyuBo14} also investigate the stochastic proximal point method,
making arguments on its stability stronger than classical results for
stochastic gradient methods.  Under Assumption~\ref{assumption:convex},
\citeauthor{RyuBo14} show that the stochastic proximal point method
guarantees $\E[\ltwo{x_{k+1} - x\opt}] \le \E[\ltwo{x_1 - x\opt}] + \sigma
\sum_{i = 1}^k \stepsize_i$, so that the iterates do not diverge
exponentially. Yet this result is not enough to explain or
provide empirical or theoretical stability, boundedness, or convergence
guarantees.


\paragraph{Notation}
For a convex function $f$, $\partial f(x)$ denotes its subgradient set at
$x$, and $f'(x) \in \partial f(x)$ denotes an arbitrary element of
the subdifferential.  Throughout, $x\opt$ denotes a minimizer of
problem~\eqref{eqn:objective} and $\mc{X}\opt = \argmin_{x \in \mc{X}} F(x)$
its optimal set.  We let $\mc{F}_k \defeq \sigma(\statrv_1, \ldots,
\statrv_k)$ be the $\sigma$-field generated by the first $k$ random
variables $\statrv_i$, so $x_k \in
\mc{F}_{k-1}$ for all $k$ under iteration~\eqref{eqn:model-iteration}.


\section{Methods}
\label{sec:aProx}

\begin{figure}[t]
  \begin{center}
    \begin{tabular}{cc}
      \hspace{-.5cm}
      \begin{overpic}[width=.5\columnwidth]{
          plots/lower-models}
        \put(18,65.5){\footnotesize{Linear}}
        \put(18,58.5){\footnotesize{Truncated}}
        \put(49.4,37){\footnotesize{$x_0$}}
        \put(73,55.5){\footnotesize{$x_1$}}
      \end{overpic} &
      \includegraphics[width=.5\columnwidth]{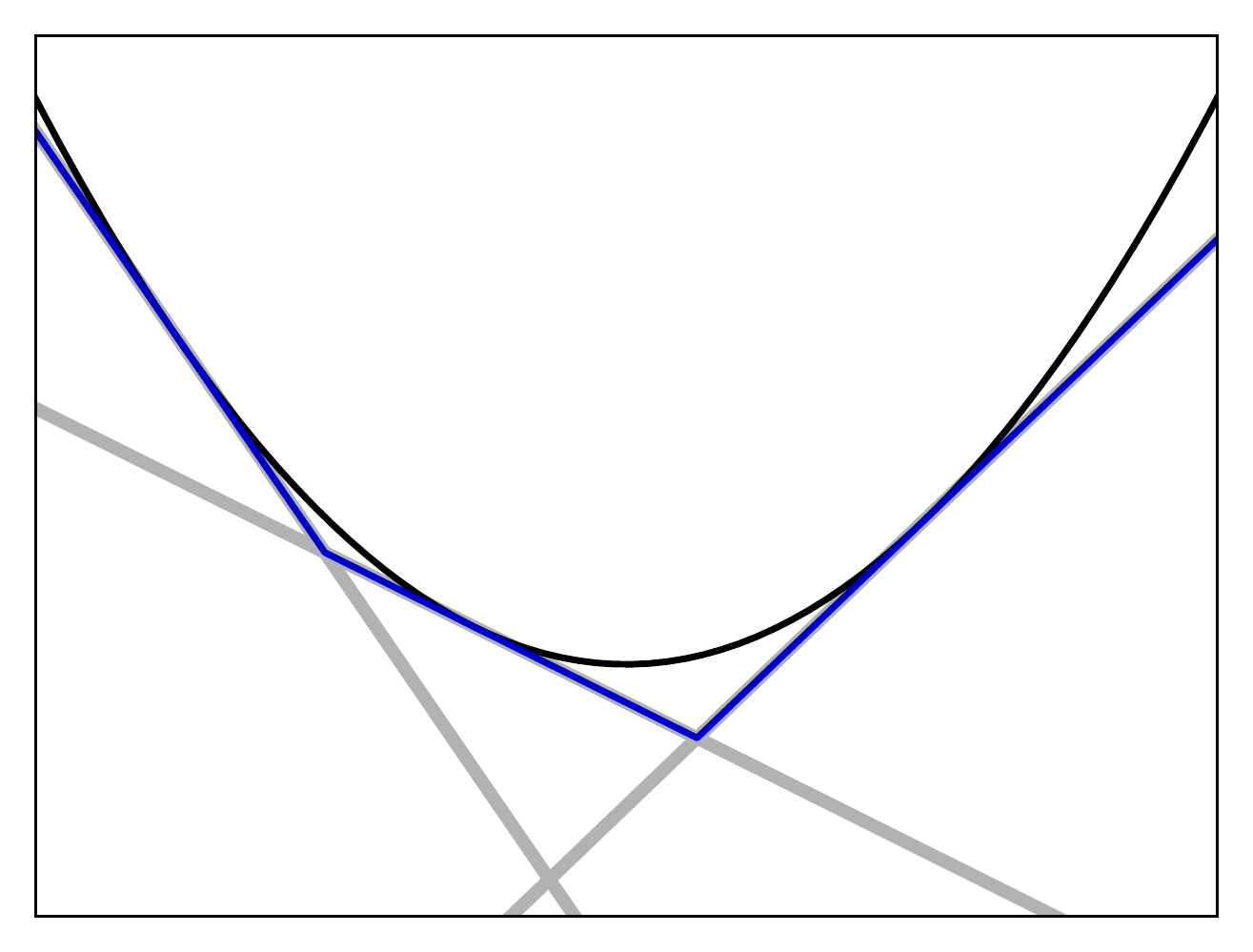}
      \\
      (a) & (b)
    \end{tabular}
    \caption{\label{fig:model-illustrations} (a) Models of the function
      $f(x) = \log(1 + e^{-x})$: a linear
      model~\eqref{eqn:dumb-linear-model} built around the point $x_0$ and
      truncated model~\eqref{eqn:trunc-model} built around the point
      $x_1$. (b) The bundle model, maximum of linear
      functions, as in the iteration~\eqref{eqn:bundling}. The
      lighter lines represent individual linear approximations, the
      darker line their maximum.}
  \end{center}
\end{figure}

We begin our contributions by introducing different natural models for
stochastic convex optimization problems, as well as a few conditions in
addition to \ref{cond:convex-model}--\ref{cond:lower-model} that we can
use to demonstrate new aspects of convergence for the \aProx family. While
the stochastic proximal point method~\eqref{eqn:sppm} satisfies all the
conditions in the paper, in some situations it may be expensive or
challenging to implement exactly.  With that in mind, we provide a catalogue
of a few models to serve as a reference for the remainder of the paper.

\paragraph{Stochastic subgradient methods:}
The starting point for any model-based methods are the simple first-order
models. As we discuss in the introduction, the stochastic subgradient method
uses the model
\begin{equation}
  \label{eqn:dumb-linear-model}
  f_{x}(y; \statval) \defeq f(x; \statval) + \<f'(x; \statval), y - x\>,
\end{equation}
where $f'(x; \statval) \in \partial f(x; \statval)$ is an arbitrary
element of the subdifferential.
The model~\eqref{eqn:dumb-linear-model}
satisfies conditions~\ref{cond:convex-model}--\ref{cond:lower-model}
by convexity.

\paragraph{Proximal point methods:}
The stochastic proximal point method uses the ``model''
\begin{equation}
  \label{eqn:prox-model}
  f_x(y; \statval) \defeq f(y; \statval),
\end{equation}
that is, the true function.
The model~\eqref{eqn:prox-model} satisfies all the conditions
we provide.

\paragraph{Truncated models:}


The first condition beyond~\ref{cond:convex-model}--\ref{cond:lower-model}
builds out of the simple observation that, if
one is minimizing a nonnegative function (for example, in most
machine learning and statistical applications with a loss function), then
\emph{a priori} a model of the function that takes
negative values cannot be accurate.  If $f(x; \statval) \ge 0$
for all $x$, a better approximation to $f$ than the linear
model~\eqref{eqn:dumb-linear-model} is to
take $f'(x;\statval) \in \partial f(x; \statval)$ and define
\begin{equation*}
  f_x(y; \statval) \defeq \hinge{f(x; \statval) + \<f'(x; \statval),
    y - x\>},
\end{equation*}
where $f'(x; \statval) \in \partial f(x; \statval)$. More generally, we may
consider models that provide a lower guarantee:
\begin{enumerate}[label=(C.\roman*),leftmargin=*]
  \setcounter{enumi}{2}
\item \label{cond:lower-by-optimal}
  For all $\statval \in \statdomain$, the models $f_x(\cdot; \statval)$ satisfy
  \begin{equation*}
    f_x(y; \statval) \ge \inf_{z \in \mc{X}}
    f(z; \statval).
  \end{equation*}
\end{enumerate}

\noindent
Thus, if we are given an oracle that, for each fixed $\statval \in \statdomain$,
can compute the minimal value $\inf_{z \in \mc{X}} f(z; \statval)$,
we may consider the truncated models
\begin{equation}
  \label{eqn:trunc-model}
  f_x(y; \statval) \defeq \max\left\{
  f(x; \statval) + \<f'(x;\statval), y - x\>,
  \inf_{z \in \mc{X}} f(z; \statval)\right\}.
\end{equation}
See Figure~\ref{fig:model-illustrations}(a) for an illustration
of this model.

Many statistical, machine learning, and signal-processing examples support
this model, because for any individual sample $\statval$ we have
$\inf_{z \in \mc{X}} f(z; \statval) = 0$.  For example, in linear
regression, $\statval = (a, b) \in \R^n \times \R$, and $\inf_z (\<a,
z\> - b)^2 = 0$ when $a \neq 0$. In logistic regression~\cite{HastieTiFr09},
we have $\statval = (a, b) \in \R^n \times \{-1, 1\}$, and $f(x; (a,
b)) = \log(1 + \exp(-b \<a, x\>))$ satisfies $\inf_z f(z; (a, b)) = 0$
unless $a = 0$. Support vector machines use $f(x; (a, b)) = \hinge{1 - b
  \<a, x\>}$, which again has infimal value 0.  Even in more complex
scenarios, these infimal values may be easy to compute; see, for example,
our discussion of poisson regression in
Section~\ref{sec:asymptotic-normality},
Example~\ref{example:poisson-regression}.

\paragraph{Relatively accurate models:} 
Now we consider an additional condition on accuracy, which allows less
accurate models than the exact model~\eqref{eqn:sppm}; for example, this
allows the bundle model, which approximates $f(\cdot; \statval)$ by the
maximum of affine lower bounds, to come. As we see in the sequel
(Theorem~\ref{theorem:prox-point-bounded}), this condition is sufficient for
strong stability and convergence guarantees for any \aProx method using the
model-based updates~\eqref{eqn:model-iteration}.
We require a bit more
notation. Let $f_{x_0}(\cdot; \statval)$ be a model centered at $x_0$
satisfying Conditions~\ref{cond:convex-model}--\ref{cond:lower-model}.
For $\stepsize > 0$ define
\begin{equation*}
  x_\stepsize \defeq \argmin_{x \in \mc{X}}
  \left\{ f_{x_0}(x; \statval) + \frac{1}{2 \stepsize}
  \ltwo{x - x_0}^2 \right\},
\end{equation*}
which is the result of a single update (leaving dependence
on $\statval$ implicit).  Then we consider
\begin{enumerate}[label=(C.\roman*),leftmargin=*]
  \setcounter{enumi}{3}
\item \label{cond:accurate-model}
  For some $\epsilon > 0$, there exists a function $\stabilityfunc :
  \statdomain \to \R_+$ with $\E[\stabilityfunc(\statrv)] < \infty$
  such that for all $x_0 \in \mc{X}$, the updated point
  $x_\stepsize$ and model $f_{x_0}(\cdot; \statval)$ satisfy
  \begin{equation*}
    f(x_\stepsize; \statval)
    \le f_{x_0}(x_\stepsize; \statval)
    + \frac{1 - \epsilon}{2 \stepsize} \ltwo{x_\stepsize - x_0}^2
    + \stabilityfunc(\statval) \stepsize.
  \end{equation*}
\end{enumerate}
The lower bound condition~\ref{cond:lower-model} guarantees that $f(x;
\statval) \ge f_{x_0}(x; \statval)$ for all $x, x_0$, so
\ref{cond:accurate-model} provides a complementary upper bound.
It is clear that the full (proximal point) model~\eqref{eqn:prox-model}
satisfies Condition~\ref{cond:accurate-model} with
$\epsilon = 1$ and $\stabilityfunc(\statval) = 0$, as $f_x = f$.  We term
Condition~\ref{cond:accurate-model} a ``relative'' accuracy because the
necessary approximation scales with $\stepsize$ so that higher accuracy is
necessary as $\stepsize \downarrow 0$, though $\norm{x_\stepsize - x_0} =
O(\stepsize)$ by standard results~\cite{DrusvyatskiyLe18}.

While aside from proximal-point models, it is not clear \emph{a priori}
how to guarantee that Condition~\ref{cond:accurate-model} holds, one
approach is to use bundle methods~\cite{HiriartUrrutyLe93ab,TeoViSmLe10},
identical to Kelley's cutting plane method~\cite{Kelley60}.  In this
situation, we begin from the linear model $f^0_x(y; \statval) = f(x;
\statval) + \<f'(x; \statval), y - x\>$, iteratively construct the lower
piecewise-linear models
\begin{equation}
  \label{eqn:bundling}
  \begin{split}
    x_\stepsize^i & \defeq \argmin_{y \in \mc{X}}
    \left\{f_x^{i-1}(y) + \frac{1}{2 \stepsize} \ltwo{y - x}^2 \right\} \\
    \mbox{and} ~~
    f_x^i(y) & \defeq \max\left\{f_x^{i-1}(y; \statval),
    f(x_\stepsize^i; \statval) + \<f'(x_\stepsize^i; \statval), y - x_\stepsize^i\>\right\}.
  \end{split}
\end{equation}
Whenever the iterate $x_\stepsize^i$ satisfies
Condition~\ref{cond:accurate-model}, we may terminate the iteration, as
$f_x^{i-1}(y)$ satisfies
Conditions~\ref{cond:convex-model}--\ref{cond:lower-model} by
construction.  While we do not address this in this paper,
the number of iterations to achieve a
solution satisfying Condition~\ref{cond:accurate-model} is
at most $O(\norm{f'(x_0; \statval)}^2)$, as each
step solves a strongly convex optimization problem
(see~\cite[Sec.~2.4]{TeoViSmLe10}).

\section{Stability and its consequences}
\label{sec:stability}

The first of our main thrusts, upon which we focus
in this section, is the stability and boundedness of the iterates for
stochastic proximal point methods and their relatives in the \aProx family
of methods. These stability guarantees are in strong contrast to standard
stochastic subgradient methods, which may diverge for problems on which
\aProx methods converge.

We begin with our definition of stability. In this definition, we let
$\mc{A}$ denote the set of positive stepsize sequences $\{\stepsize_k\}$
with $\sum_k \stepsize_k^2 < \infty$. We call a pair $(\mc{F}, \mc{P})$ a
\emph{collection of problems} if $\mc{P}$ is a collection of probability
measures on a sample space $\statdomain$, and $\mc{F}$ is a collection of
functions $f : \mc{X} \times \statdomain \to \R$, where $f(\cdot; \statval)$
is convex. We make the following definition.
\begin{definition}
  \label{definition:stability}
  An algorithm generating
  iterates $x_k$ according to
  the model-based update~\eqref{eqn:model-iteration}
  is \emph{stable in probability} for the collection of
  problems $(\mc{F}, \mc{P})$ if for all $f \in \mc{F}$ and $P \in \mc{P}$
  defining $F(x) = \E_P[f(x; \statrv)]$
  and $\mc{X}\opt = \argmin_{x \in \mc{X}} F(x)$, and for all 
  stepsize sequences $\{\stepsize_k\} \in \mc{A}$
  \begin{equation}
    \sup_k \dist(x_k, \mc{X}\opt) < \infty
    ~~ \mbox{with~probability~1}.
    \label{eqn:bounded-distance}
  \end{equation}
\end{definition}

Classical results coupled with the Robbins-Siegmund
supermartingale convergence theorem~\cite{RobbinsSi71} guarantee the
stochastic subgradient method satisfies $\sup_k \dist(x_k, \mc{X}\opt) <
\infty$ whenever
\begin{equation*}
  \E\left[\ltwo{f'(x; \statrv)}^2\right] \le C_0 + C_1 \dist(x, \mc{X}\opt)^2
  ~~~ \mbox{for~all~} x \in \mc{X},
\end{equation*}
which typical smoothness or boundedness conditions imply
(cf.~\cite{RobbinsSi71, PolyakJu92, Bertsekas99}). Stochastic (approximate)
proximal point approaches allow us to move beyond these quadratic growth
assumptions; in contrast, for objectives for which the gradients grow more
than quadratically, condition~\eqref{eqn:bounded-distance} typically
fails for gradient methods:

\begin{example}[Divergence for non-quadratics]
  \label{example:non-quadratic-divergence}
  Let $F(x) = \frac{1}{4} x^4$, and consider any sequence of stepsizes
  $\stepsize_k > 0$ satisfying $\stepsize_{k+1} \ge \frac{1}{4}
  \stepsize_k$ for all $k$, and let
  $x_{k+1} = x_k - \stepsize_k F'(x_k)$ be generated by the gradient
  method.
  Then whenever the initial iterate $x_1$ satisfies
  $|x_1| \ge \sqrt{3 / \stepsize_1}$,
  we have $|x_k| \ge 2 |x_{k-1}|$ for all $k$,
  so
  $|x_k| \ge 2^k |x_1|$ for all $k \in \N$.
\end{example}

When the objective grows faster than polynomially, even worse behavior is
possible; for example, for the objective $F(x) = (e^x + e^{-x})$, for any
polynomially decreasing stepsize sequence, if $x_1$ is large enough we have
the super-exponential divergence $|x_k| \ge 2^{2^k} |x_1|$.  Standard
stochastic gradient methods may also exhibit undesirable behavior for easier
problems; even in situations for which the objective is smooth and in which
there is no noise, (sub)gradient methods may suffer transient exponential
growth, as the following example demonstrates.

\begin{example}[Instability for quadratics]
  \label{example:quadratic-instability}
  Let $F(x) = \half x^2$. Then the gradient method iterates $x_{k + 1} = (1
  - \stepsize_k) x_k$. Let us assume that $\stepsize_k = \stepsize_0
  k^{-\steppow}$, and let $\stepsize_0 \ge 3 K^\steppow$ for some $K \in
  \N$. Then assuming $x_1 \neq 0$, for all $k \le K$ we have $|x_{k+1}| = |1
  - \stepsize_k| |x_k| \ge 2 |x_k|$, so that $|x_{k+1}| \ge 2^k$ for $k \le
  K$. Classical guarantees for the (stochastic) gradient method show that
  $x_k$ will converge eventually, but even for smooth quadratics, gradient
  descent suffers from exponential growth behavior if the stepsize is
  mis-specified.
\end{example}

While stylized,
these examples highlight the difficulty of naive application of gradient
methods; in the sequel, we
show how accurate models alleviate the issues in the
examples.

\subsection{Stability of (approximate) proximal methods}

The starting point of almost all of what follows are sufficient conditions
on the models we use to guarantee stability as in
Definition~\ref{definition:stability}. For this first result, in addition to
the two conditions~\ref{cond:convex-model}--\ref{cond:lower-model}, we
assume the models are accurate at their updated points, that is,
Condition~\ref{cond:accurate-model}.  The full model (stochastic proximal
point method) satisfies these conditions, and
so too do bundle models~\eqref{eqn:bundling}.  In the
theorem, recall the $\sigma$-field
$\mc{F}_k \defeq \sigma(\statrv_1, \ldots, \statrv_k)$.

\begin{theorem}
  \label{theorem:prox-point-bounded}
  Let Assumption~\ref{assumption:convex} hold and $x_k$ be generated by the
  iteration~\eqref{eqn:model-iteration} with any model satisfying
  Conditions~\ref{cond:convex-model}--\ref{cond:lower-model}
  and~\ref{cond:accurate-model}.  Then for
  all $x\opt \in \mc{X}\opt$,
  \begin{equation*}
    \E\left[\ltwo{x_{k + 1} - x\opt}^2 \mid \mc{F}_{k-1}\right]
    \le
    \ltwo{x_k - x\opt}^2
    + \stepsize_k^2 \left(\frac{\sigma^2}{\epsilon} +
    \E[\stabilityfunc(\statrv)]\right).
  \end{equation*}
\end{theorem}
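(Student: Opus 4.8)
The plan is to bound the one-step progress $\ltwos{x_{k+1}-x\opt}^2$ by expanding the squared distance and exploiting the optimality of $x_{k+1}$ for the regularized model. Write $x_{k+1}=x_\stepsize$ with model centered at $x_k$ (dropping the $\statval=\statrv_k$ dependence). The standard decomposition is
\begin{equation*}
  \ltwo{x_{k+1}-x\opt}^2 = \ltwo{x_k - x\opt}^2 - \ltwo{x_{k+1}-x_k}^2 + 2\<x_{k+1}-x_k, x_{k+1}-x\opt\>.
\end{equation*}
First I would use the first-order optimality condition for the update~\eqref{eqn:model-iteration}: there is a subgradient $g \in \partial_y f_{x_k}(y;\statrv_k)|_{y=x_{k+1}}$ with $x_{k+1}-x_k = -\stepsize_k g + (\text{normal cone term})$, and the variational inequality gives $\<x_{k+1}-x_k + \stepsize_k g, \, x\opt - x_{k+1}\> \ge 0$ for $x\opt \in \mc{X}$. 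This converts the cross term into $\<x_{k+1}-x_k, x_{k+1}-x\opt\> \le \stepsize_k \<g, x\opt - x_{k+1}\>$, and by convexity of the model (Condition~\ref{cond:convex-model}) $\<g, x\opt - x_{k+1}\> \le f_{x_k}(x\opt;\statrv_k) - f_{x_k}(x_{k+1};\statrv_k)$.

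The central maneuver is to bound $f_{x_k}(x\opt;\statrv_k) - f_{x_k}(x_{k+1};\statrv_k)$ using the two approximation conditions. By the lower-model property~\ref{cond:lower-model}, $f_{x_k}(x\opt;\statrv_k) \le f(x\opt;\statrv_k)$. For the lower-order term I would invoke the relative accuracy Condition~\ref{cond:accurate-model} at the updated point: $f(x_{k+1};\statrv_k) \le f_{x_k}(x_{k+1};\statrv_k) + \frac{1-\epsilon}{2\stepsize_k}\ltwos{x_{k+1}-x_k}^2 + \stabilityfunc(\statrv_k)\stepsize_k$. Rearranging yields $-f_{x_k}(x_{k+1};\statrv_k) \le -f(x_{k+1};\statrv_k) + \frac{1-\epsilon}{2\stepsize_k}\ltwos{x_{k+1}-x_k}^2 + \stabilityfunc(\statrv_k)\stepsize_k$, and since $f(x_{k+1};\statrv_k)\ge \inf_x f(x;\statrv_k)$... actually I would more directly combine: the cross term is at most $\stepsize_k[f(x\opt;\statrv_k)-f(x_{k+1};\statrv_k)] + \frac{1-\epsilon}{2}\ltwos{x_{k+1}-x_k}^2 + \stabilityfunc(\statrv_k)\stepsize_k^2$. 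Substituting back and absorbing $\frac{1-\epsilon}{2}\ltwos{x_{k+1}-x_k}^2 - \tfrac12\ltwos{x_{k+1}-x_k}^2 = -\frac{\epsilon}{2}\ltwos{x_{k+1}-x_k}^2$ gives
\begin{equation*}
  \ltwo{x_{k+1}-x\opt}^2 \le \ltwo{x_k-x\opt}^2 - \epsilon\ltwo{x_{k+1}-x_k}^2 + 2\stepsize_k[f(x\opt;\statrv_k)-f(x_{k+1};\statrv_k)] + 2\stabilityfunc(\statrv_k)\stepsize_k^2.
\end{equation*}

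Next I would handle the term $f(x\opt;\statrv_k)-f(x_{k+1};\statrv_k)$. Using convexity of $f(\cdot;\statrv_k)$ and a subgradient $f'(x\opt;\statrv_k)$, we have $f(x\opt;\statrv_k)-f(x_{k+1};\statrv_k) \le \<f'(x\opt;\statrv_k), x\opt - x_{k+1}\> = \<f'(x\opt;\statrv_k), x\opt-x_k\> + \<f'(x\opt;\statrv_k), x_k - x_{k+1}\>$. The first inner product vanishes in conditional expectation since $\E[f'(x\opt;\statrv_k)\mid\mc{F}_{k-1}]\in\partial F(x\opt)\ni 0$ and $x_k$ is $\mc{F}_{k-1}$-measurable. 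The second I would control by Young's inequality: $2\stepsize_k\<f'(x\opt;\statrv_k), x_k-x_{k+1}\> \le \frac{\stepsize_k^2}{\epsilon}\ltwos{f'(x\opt;\statrv_k)}^2 + \epsilon\ltwos{x_{k+1}-x_k}^2$, which exactly cancels the $-\epsilon\ltwos{x_{k+1}-x_k}^2$ term. Taking conditional expectation and using Assumption~\ref{assumption:convex} to bound $\E[\ltwos{f'(x\opt;\statrv)}^2]\le\sigma^2$ delivers the claimed inequality. \textbf{The main obstacle} I anticipate is tracking the constraint/normal-cone contribution cleanly in the optimality condition (ensuring the projection onto $\mc{X}$ only helps), and calibrating the Young's inequality split so the $\ltwos{x_{k+1}-x_k}^2$ terms cancel precisely against the $\epsilon$ surplus from Condition~\ref{cond:accurate-model}, yielding the clean constant $\sigma^2/\epsilon$ rather than a looser one.
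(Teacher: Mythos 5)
Your proposal is correct and follows essentially the same route as the paper: the one-step variational inequality plus model convexity (the paper's Lemma~\ref{lemma:single-step-progress}), Condition~\ref{cond:accurate-model} to trade $f_{x_k}(x_{k+1})$ for $f(x_{k+1})$ at the cost of $\frac{1-\epsilon}{2\stepsize_k}\ltwo{x_{k+1}-x_k}^2$, Condition~\ref{cond:lower-model} at $x\opt$, and then convexity of $f$ at $x\opt$ with Young's inequality calibrated by $\epsilon$ to cancel the residual $\frac{\epsilon}{2}\ltwo{x_{k+1}-x_k}^2$ (the paper's Lemma~\ref{lemma:conv-bound} with $\beta = \stepsize_k/\epsilon$). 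One tiny imprecision: in the constrained case the term $\<F'(x\opt), x\opt - x_k\>$ does not vanish but is merely nonpositive (choosing the measurable selection so that $\E[f'(x\opt;\statrv_k)] = F'(x\opt)$ with $\<F'(x\opt), y - x\opt\> \ge 0$ on $\mc{X}$), which is all the bound requires.
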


\noindent
Before providing the proof of the theorem (see
Section~\ref{sec:proof-prox-point-bounded}), we present a few of its
consequences for stability. By taking $x\opt$ to be the projection of $x_k$ onto
$\mc{X}\opt$, Theorem~\ref{theorem:prox-point-bounded} implies
\begin{equation*}
  \E[\dist(x_{k+1}, \mc{X}\opt)^2 \mid \mc{F}_{k-1}]
  \le \dist(x_k, \mc{X}\opt)^2
    + \stepsize_k^2 \left(\frac{\sigma^2}{\epsilon} +
    \E[\stabilityfunc(\statrv)]\right).
\end{equation*}
A few somewhat more consequential corollaries, at least from the perspective
of our stability Definition~\ref{definition:stability}, follow.
We first have that the expected distance is non-divergent.
\begin{corollary}
  \label{corollary:prox-point-expectation-bounded}
  Let the conditions of Theorem~\ref{theorem:prox-point-bounded} hold.
  For each $k \in \N$,
  \begin{equation*}
    \E\left[\dist(x_{k+1}, \mc{X}\opt)^2 \right]
    \le \E\left[\dist(x_1, \mc{X}\opt)^2\right]
    + \left(\frac{\sigma^2}{\epsilon}
    + \E[\stabilityfunc(\statrv)]\right)
    \sum_{i=1}^k \stepsize_i^2.
  \end{equation*}
\end{corollary}

A second corollary establishes that the iterates
of appropriately accurate \aProx methods are stable.
We require the Robbins-Siegmund
almost supermartingale convergence lemma.
\begin{lemma}[\cite{RobbinsSi71}]
  \label{lemma:robbins-siegmund}
  Let $A_k, B_k, C_k, D_k \ge 0$ be non-negative random variables adapted to
  the filtration $\mc{F}_k$ and satisfying $\E[A_{k + 1} \mid \mc{F}_k] \le
  (1 + B_k) A_k + C_k - D_k$. Then on the event $\{\sum_k B_k < \infty,
  \sum_k C_k < \infty\}$, there is a random $A_\infty < \infty$
  such that $A_k \cas A_\infty$ and $\sum_k D_k < \infty$.
\end{lemma}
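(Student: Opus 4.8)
The plan is to run the classical supermartingale-transformation argument: convert the almost-supermartingale recursion into a genuine nonnegative supermartingale by multiplying through by a predictable discount factor, invoke martingale convergence, and handle the restriction to the event $\{\sum_k B_k < \infty, \sum_k C_k < \infty\}$ by localizing with a stopping time. The algebra is bookkeeping; the conceptual content is choosing the right multiplier and compensator.

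First I would introduce the predictable multiplier $\gamma_k \defeq \prod_{i=1}^{k-1}(1+B_i)^{-1}$, which is $\mc{F}_{k-1}$-measurable and lies in $(0,1]$ since $B_i \ge 0$. Because $\gamma_{k+1} = \gamma_k (1+B_k)^{-1}$ is nonnegative and $\mc{F}_k$-measurable, multiplying the hypothesized recursion by it gives $\E[\gamma_{k+1} A_{k+1} \mid \mc{F}_k] \le \gamma_k A_k + \gamma_{k+1} C_k - \gamma_{k+1} D_k$. This motivates the process $W_k \defeq \gamma_k A_k + \sum_{i=1}^{k-1} \gamma_{i+1} D_i - \sum_{i=1}^{k-1} \gamma_{i+1} C_i$. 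Using that $C_i, D_i, \gamma_{i+1}$ are $\mc{F}_k$-measurable for $i \le k$, a short telescoping computation shows $\E[W_{k+1} \mid \mc{F}_k] \le W_k$, so $W_k$ is a supermartingale.

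The process $W_k$ is not yet nonnegative because of the $-\sum \gamma_{i+1} C_i$ term, so the second step is localization. For $M \in \N$ define the stopping time $\tau_M \defeq \inf\{k : \sum_{i=1}^k (B_i + C_i) > M\}$. The stopped process $W_{k \wedge \tau_M}$ is again a supermartingale, and the definition of $\tau_M$ forces $\sum_{i=1}^{(k \wedge \tau_M)-1} \gamma_{i+1} C_i \le \sum_{i=1}^{(k\wedge\tau_M)-1} C_i \le M$ on every sample path, so $W_{k \wedge \tau_M} + M \ge 0$ is a nonnegative supermartingale and converges almost surely to a finite limit. On $\{\tau_M = \infty\}$ this yields convergence of $W_k$ itself; since there $\sum_i \gamma_{i+1} C_i$ is a convergent series of nonnegative terms bounded by $M$, it follows that $\gamma_k A_k + \sum_{i=1}^{k-1} \gamma_{i+1} D_i$ converges. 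As both summands are nonnegative and the second is nondecreasing, each converges separately, giving $\sum_i \gamma_{i+1} D_i < \infty$. Finally, $\sum_i B_i \le M$ on $\{\tau_M=\infty\}$ forces $\gamma_\infty = \prod_i (1+B_i)^{-1} \ge e^{-M} > 0$, so $\gamma_{i+1}$ is bounded below, whence $\sum_i D_i < \infty$, and $A_k = \gamma_k^{-1}(\gamma_k A_k) \to A_\infty$ with $A_\infty < \infty$.

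To remove the localization, note $\{\sum_k B_k < \infty, \sum_k C_k < \infty\} = \bigcup_{M \in \N} \{\tau_M = \infty\}$, so the union over $M$ delivers the conclusions $A_k \cas A_\infty$ and $\sum_k D_k < \infty$ on the full event. The main obstacle is the first two steps: identifying the predictable multiplier $\gamma_k$ and the compensator sums so that the telescoping produces a supermartingale, combined with the stopping-time bookkeeping that keeps the stopped process bounded below on every path. Once the nonnegative supermartingale is in hand, its almost-sure convergence and the separation of the limit into the $A$- and $D$-parts are routine.
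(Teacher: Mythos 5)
The paper offers no proof of this lemma; it is imported verbatim from Robbins and Siegmund \cite{RobbinsSi71}, and your argument is precisely the classical proof from that reference (discount by the predictable factor $\gamma_k = \prod_{i=1}^{k-1}(1+B_i)^{-1}$, compensate to form the supermartingale $W_k$, localize with the stopping time $\tau_M$), so it is correct and there is nothing in the paper to compare it against. The only caveat is that the lemma assumes no integrability, so the appeal to almost-sure convergence of the nonnegative supermartingale $W_{k \wedge \tau_M} + M$ strictly requires either also localizing on the event $\{A_1 \le N\} \in \mc{F}_1$ (which makes the stopped, truncated process a genuine supermartingale with expectation at most $N + M$) or citing the convergence theorem for generalized positive supermartingales; with that routine patch the argument is complete.
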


\noindent
By applying Theorem~\ref{theorem:prox-point-bounded} with $A_k =
\dist(x_{k+1}, \mc{X}\opt)^2$, $C_k =
\stepsize_{k+1}^2 (\sigma^2 / \epsilon + \E[\stabilityfunc(\statrv)])$,
and $B_k = D_k = 0$ in
Lemma~\ref{lemma:robbins-siegmund}, we have
\begin{corollary}
  \label{corollary:prox-point-probability-bounded}
  Let the conditions of Theorem~\ref{theorem:prox-point-bounded} hold
  and assume $\sum_k \stepsize_k^2 < \infty$. Then
  \begin{equation*}
    \sup_{k \in \N} \dist(x_k, \mc{X}\opt) < \infty
  \end{equation*}
  and $\dist(x_k, \mc{X}\opt)$ converges to some
  finite value with probability 1.
\end{corollary}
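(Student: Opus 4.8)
The plan is to observe that this corollary is an immediate consequence of Theorem \ref{theorem:prox-point-bounded} once its one-step recursion is fed into the Robbins--Siegmund almost-supermartingale lemma (Lemma \ref{lemma:robbins-siegmund}). Nothing new needs to be estimated; the only genuine work is to shift indices so that the theorem's conditioning on $\mc{F}_{k-1}$ lines up with the lemma's conditioning on $\mc{F}_k$, and to verify the summability hypothesis $\sum_k C_k < \infty$ that activates the lemma's conclusion.

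First I would record the distance form of the recursion. Taking $x\opt$ to be the projection of $x_k$ onto $\mc{X}\opt$ --- which is $\mc{F}_{k-1}$-measurable, since $x_k \in \mc{F}_{k-1}$, so the fixed-minimizer bound of Theorem \ref{theorem:prox-point-bounded} applies on each realization of $\mc{F}_{k-1}$ --- and using that this projection only decreases the distance to $\mc{X}\opt$, I obtain (exactly the inequality already displayed after the theorem)
\[
\E[\dist(x_{k+1}, \mc{X}\opt)^2 \mid \mc{F}_{k-1}]
\le \dist(x_k, \mc{X}\opt)^2
+ \stepsize_k^2 \left(\frac{\sigma^2}{\epsilon} + \E[\stabilityfunc(\statrv)]\right).
\]
I would then set $A_k \defeq \dist(x_{k+1}, \mc{X}\opt)^2$, $C_k \defeq \stepsize_{k+1}^2(\sigma^2/\epsilon + \E[\stabilityfunc(\statrv)])$, and $B_k = D_k = 0$. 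Because $x_{k+1} \in \mc{F}_k$, each $A_k$ is nonnegative and $\mc{F}_k$-measurable and $C_k$ is deterministic, so replacing $k$ by $k+1$ in the display gives precisely $\E[A_{k+1} \mid \mc{F}_k] \le (1 + B_k) A_k + C_k - D_k$, the hypothesis of Lemma \ref{lemma:robbins-siegmund}. This choice $A_k = \dist(x_{k+1}, \mc{X}\opt)^2$ is exactly what converts the $\mc{F}_{k-1}$-conditioning of the theorem into the $\mc{F}_k$-conditioning of the lemma.

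To finish I would check the lemma's event. Here $\sum_k B_k = 0$, and the constant $\sigma^2/\epsilon + \E[\stabilityfunc(\statrv)]$ is finite because Assumption \ref{assumption:convex} gives $\sigma^2 < \infty$ and Condition \ref{cond:accurate-model} gives $\E[\stabilityfunc(\statrv)] < \infty$; hence $\sum_k C_k = (\sigma^2/\epsilon + \E[\stabilityfunc(\statrv)]) \sum_k \stepsize_{k+1}^2 < \infty$ by the standing assumption $\sum_k \stepsize_k^2 < \infty$. The event in Lemma \ref{lemma:robbins-siegmund} thus holds with probability $1$, yielding a finite random $A_\infty$ with $A_k \cas A_\infty$. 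Consequently $\dist(x_k, \mc{X}\opt)^2 \to A_\infty < \infty$, so $\dist(x_k, \mc{X}\opt) \to \sqrt{A_\infty}$ almost surely --- a finite limit --- and since every convergent real sequence is bounded, $\sup_{k \in \N} \dist(x_k, \mc{X}\opt) < \infty$ with probability $1$. The argument is essentially bookkeeping: the only points demanding care are the measurability/index alignment just described (legitimizing the random, $\mc{F}_{k-1}$-measurable minimizer inside the conditional expectation and matching filtrations) and the finiteness of the constant multiplying $\stepsize_k^2$, which is what makes $\sum_k C_k < \infty$.
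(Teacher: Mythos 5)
Your proposal is correct and follows exactly the paper's own argument: project $x_k$ onto $\mc{X}\opt$ to get the distance recursion from Theorem~\ref{theorem:prox-point-bounded}, then apply Lemma~\ref{lemma:robbins-siegmund} with $A_k = \dist(x_{k+1}, \mc{X}\opt)^2$, $C_k = \stepsize_{k+1}^2(\sigma^2/\epsilon + \E[\stabilityfunc(\statrv)])$, and $B_k = D_k = 0$. The only difference is that you spell out the measurability and index-shifting details that the paper leaves implicit, which is fine.
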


Combining Corollaries~\ref{corollary:prox-point-expectation-bounded}
and~\ref{corollary:prox-point-probability-bounded}, we see that the
stochastic proximal point method and its \aProx variants---as long as they
satisfy the accuracy condition~\ref{cond:accurate-model}---are stable
according to Definition~\ref{definition:stability}. This is in strong
contrast to
stochastic gradient methods and their relatives, which can be unstable even
for relatively simple problems.

\subsubsection{Proof of Theorem~\ref{theorem:prox-point-bounded}}
\label{sec:proof-prox-point-bounded}

We now return to the promised proof of
Theorem~\ref{theorem:prox-point-bounded}.  In giving the proof, we present
lemmas on the progress of individual iterates for subsequent use.
These results are
typical of Lyapunov-type arguments for convergence of stochastic gradient
methods~\cite{Zinkevich03, NemirovskiJuLaSh09}.

\begin{lemma}
  \label{lemma:conv-bound}
  Let $h$ be convex and subdifferentiable on a closed convex set $\mc{X}$
  and let $\beta > 0$. Then for all $x_0, x_1, y \in \mc{X}$,
  and $h'(y) \in \partial h(y)$,
  \begin{equation*}
    h(y) - h(x_1)
    \le \<h'(y), y - x_0\>
    + \frac{1}{2 \beta} \norm{x_1 - x_0}^2
    + \frac{\beta}{2} \norm{h'(y)}^2
  \end{equation*}
\end{lemma}
\begin{proof}
  By the first-order conditions for convexity, we have
  \begin{align*}
    h(y) - h(x_1)
    & \le \<h'(y), y - x_1\>
    = \<h'(y), y - x_0\> + \<h'(y), x_0 - x_1\> \\
    & \le \<h'(y), y - x_0\> + \frac{1}{2 \beta} \norm{x_1 - x_0}^2
    + \frac{\beta}{2} \norm{h'(y)}^2,
  \end{align*}
  where the second line uses Young's inequality.
\end{proof}

We also have the following lemma, which gives a one-step progress guarantee
for any algorithm using models satisfying
Conditions~\ref{cond:convex-model}--\ref{cond:lower-model}.
\begin{lemma}
  \label{lemma:single-step-progress}
  Let Condition~\ref{cond:convex-model} hold.
  In each step of the method~\eqref{eqn:model-iteration},
  for any $x \in \mc{X}$,
  \begin{equation*}
    \half \ltwo{x_{k + 1} - x}^2
    \le \half \ltwo{x_k - x}^2
    - \stepsize_k\left[f_{x_k}(x_{k + 1}; \statrv_k)
      - f_{x_k}(x; \statrv_k)\right]
    - \half \ltwo{x_k - x_{k+1}}^2.
  \end{equation*}
\end{lemma}
\begin{proof}
  By the first-order conditions for convex optimization,
  for some $g_k \in \partial f_{x_k}(x_{k+1}; \statrv_k)$ we have that
  $\<\stepsize_k g_k + (x_{k+1} - x_k), y - x_{k+1}\> \ge 0$ for
  all $y \in \mc{X}$.
  Setting $y = x$, we obtain
  \begin{equation*}
    \stepsize_k \<g_k, x_{k+1} - x\>
    \le \<x_{k + 1} - x_k, x - x_{k + 1}\>
    = \half \left[\ltwo{x_k - x}^2
      - \ltwo{x_{k + 1} - x}^2
      - \ltwo{x_{k + 1} - x_k}^2 \right].
  \end{equation*}
  As $f_{x_k}(x; \statrv_k) \ge f_{x_k}(x_{k+1}; \statrv_k) + \<g_k,
  x - x_{k+1}\>$ by Condition~\ref{cond:convex-model}, this gives the
  result.
\end{proof}

With Lemmas~\ref{lemma:conv-bound} and~\ref{lemma:single-step-progress} in
place, we can prove the theorem. Let $x\opt \in \mc{X}\opt$ be an otherwise
arbitrary optimal point.  Applying Lemma~\ref{lemma:single-step-progress}
with $x = x\opt$, we have
\begin{align*}
  \half \ltwo{x_{k + 1} - x\opt}^2
  & \le \half \ltwo{x_k - x\opt}^2
  - \stepsize_k\left[f_{x_k}(x_{k + 1}; \statrv_k)
    - f_{x_k}(x\opt; \statrv_k)\right]
  - \half \ltwo{x_k - x_{k+1}}^2 \\
  & \stackrel{(i)}{\le}
  \half \ltwo{x_k - x\opt}^2 - \stepsize_k \left[f(x_{k+1};
    \statrv_k) - f_{x_k}(x\opt; \statrv_k)\right]
  - \frac{\epsilon}{2} \ltwo{x_k - x_{k+1}}^2
  + \stabilityfunc(\statrv_k) \stepsize_k^2 \\
  & \stackrel{(ii)}{\le}
  \half \ltwo{x_k - x\opt}^2 - \stepsize_k \left[f(x_{k+1};
    \statrv_k) - f(x\opt; \statrv_k)\right]
  - \frac{\epsilon}{2} \ltwo{x_k - x_{k+1}}^2
  + \stabilityfunc(\statrv_k) \stepsize_k^2,
\end{align*}
where inequality~$(i)$ is a consequence of
the accurate model condition~\ref{cond:accurate-model}
and $(ii)$ because $f_{x}(x\opt; \statval) \le f(x\opt; \statval)$
by the lower model condition~\ref{cond:lower-model}.
Now, we apply Lemma~\ref{lemma:conv-bound} with
$x_1 = x_{k+1}$, $x_0 = x_k$, $y = x\opt$, and $\beta = \frac{\stepsize_k}{
  \epsilon}$ to find
\begin{align*}
  \half \ltwo{x_{k + 1} - x\opt}^2
  & \le \half \ltwo{x_k - x\opt}^2
  + \stepsize_k \<f'(x\opt; \statrv_k), x\opt - x_k\>
  + \frac{\stepsize_k^2}{2 \epsilon}
  \ltwo{f'(x\opt; \statrv_k)}^2
  + \stabilityfunc(\statrv_k) \stepsize_k^2
\end{align*}
for all $f'(x\opt; \statrv_k) \in \partial f(x\opt; \statrv_k)$.

For some $F'(x\opt) \in \partial F(x\opt)$, we have $\<F'(x\opt), y -
x\opt\> \ge 0$ for all $y \in \mc{X}$.  As our choice of $f'(x\opt;
\statval) \in \partial f(x\opt; \statval)$ above was arbitrary, we may take
$f'(x\opt; \statrv_k)$ so that $\E[f'(x\opt; \statrv_k)] = F'(x\opt)$ for
any desired $F'(x\opt) \in \partial F(x\opt)$ (cf.~\cite{Bertsekas73}).
Thus, taking expectations with respect to $\mc{F}_{k-1}$,
\begin{equation*}
  \half \E[\ltwo{x_{k + 1} - x\opt}^2 \mid \mc{F}_{k-1}]
  \le \half \ltwo{x_k - x\opt}^2
  + \frac{\stepsize_k^2}{2 \epsilon} \E\left[\ltwo{f'(x\opt; \statrv)}^2\right]
  + \E[\stabilityfunc(\statrv)] \stepsize_k^2
  + \stepsize_k \<F'(x\opt), x\opt - x_k\>.
\end{equation*}
As $\<F'(x\opt), x\opt - x_k\> \le 0$, we obtain the theorem.

\subsection{Convergence of \aProx methods}

The key consequence of Theorem~\ref{theorem:prox-point-bounded} is that the
iterates $x_k$ are stable in probability~\eqref{eqn:bounded-distance},
remaining bounded with probability 1. Iterate boundedness of algorithms does
not guarantee convergence in general; however, in this section, we show that
as a consequence of this boundedness, any algorithm satisfying
Conditions~\ref{cond:convex-model}--\ref{cond:lower-model} is convergent.
Assumptions~\ref{assumption:convex}
and~\ref{assumption:very-weak-moment} are insufficient to guarantee
convergence of subgradient methods, which---as our examples show---may
diverge without uniform boundedness conditions on the subgradients.  In
contrast, any \aProx method that is stable in
probability~\eqref{eqn:bounded-distance} guarantees convergence with
probability 1.  Throughout this section and
Section~\ref{sec:asymptotic-normality}, we without comment make the
assumption that the stepsizes $\stepsize_k$ satisfy the standard summability
conditions
\begin{equation*}
  \stepsize_k > 0 ~ \mbox{for~all~} k,
  ~~~
  \sum_{k=1}^\infty \stepsize_k = \infty,
  ~~~ \mbox{and} ~~~
  \sum_{k=1}^\infty \stepsize_k^2 < \infty.
\end{equation*}
\begin{proposition}
  \label{proposition:convergence-from-boundedness}
  Let Assumptions~\ref{assumption:convex} and
  \ref{assumption:very-weak-moment} hold. Let the iterates $x_k$ be
  generated by any method satisfying
  Conditions~\ref{cond:convex-model}--\ref{cond:lower-model}
  and $F\opt = \inf_{x \in \mc{X}} F(x)$.
  On the event that
  $\sup_k \dist(x_k, \mc{X}\opt) < \infty$, with probability
  one both $\sum_k \stepsize_k (F(x_k) - F\opt) < \infty$,
  and the iterates $x_k$ are convergent: there exists
  $x\opt \in \mc{X}\opt$ such that
  $\norm{x_k - x\opt} \cas 0$ and
  $F(x_k) \cas F(x\opt)$.
\end{proposition}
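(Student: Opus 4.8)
The plan is to turn the one-step progress bound of Lemma~\ref{lemma:single-step-progress} into a supermartingale recursion for $\half \ltwo{x_k - x\opt}^2$, feed it into the Robbins--Siegmund Lemma~\ref{lemma:robbins-siegmund}, and then extract iterate convergence from the resulting almost-sure convergence of distances together with the summability of the optimality gaps. The one subtlety that requires care is that the noise term in the recursion is only controlled by $\growfunc(\dist(x_k,\mc{X}\opt))$, which is random and need not be summable unless the trajectory is bounded; this is exactly what forces a stopping-time localization, and is the main obstacle.

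First I would fix an arbitrary $x\opt \in \mc{X}\opt$ and apply Lemma~\ref{lemma:single-step-progress} with $x = x\opt$. Splitting $f_{x_k}(x_{k+1};\statrv_k) - f_{x_k}(x\opt;\statrv_k)$ as $[f_{x_k}(x_{k+1};\statrv_k) - f_{x_k}(x_k;\statrv_k)] + [f_{x_k}(x_k;\statrv_k) - f_{x_k}(x\opt;\statrv_k)]$, I would use Condition~\ref{cond:lower-model} (so $f_{x_k}(x_k;\statrv_k) = f(x_k;\statrv_k)$ and $f_{x_k}(x\opt;\statrv_k) \le f(x\opt;\statrv_k)$) to bound the second bracket below by $f(x_k;\statrv_k) - f(x\opt;\statrv_k)$, and convexity of the model (Condition~\ref{cond:convex-model}) together with the subgradient containment~\eqref{eqn:subgrad-containment} to bound the first bracket below by $\<g_k, x_{k+1} - x_k\>$ for some $g_k \in \partial f(x_k;\statrv_k)$. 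Completing the square against the $-\half\ltwo{x_k - x_{k+1}}^2$ term (via $\stepsize_k\ltwo{g_k}\ltwo{x_{k+1}-x_k} - \half\ltwo{x_{k+1}-x_k}^2 \le \half\stepsize_k^2\ltwo{g_k}^2$) gives
\begin{equation*}
  \half\ltwo{x_{k+1} - x\opt}^2 \le \half\ltwo{x_k - x\opt}^2 - \stepsize_k\left[f(x_k;\statrv_k) - f(x\opt;\statrv_k)\right] + \half\stepsize_k^2\ltwo{g_k}^2.
\end{equation*}
Taking $\E[\cdot \mid \mc{F}_{k-1}]$, using that $x_k \in \mc{F}_{k-1}$ while $\statrv_k$ is independent, converts the linear term into the drift $-\stepsize_k(F(x_k) - F\opt)$, and Assumption~\ref{assumption:very-weak-moment} bounds the noise term by $\half\stepsize_k^2\growfunc(\dist(x_k,\mc{X}\opt))$.

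To handle the non-summable remainder I would localize: define the stopping time $\tau_m \defeq \inf\{k : \dist(x_k,\mc{X}\opt) > m\}$, which is adapted since $x_k \in \mc{F}_{k-1}$, and apply Lemma~\ref{lemma:robbins-siegmund} to the stopped sequence $A_k \defeq \half\ltwo{x_{k\wedge\tau_m} - x\opt}^2$ (with the same index shift used to derive Corollary~\ref{corollary:prox-point-probability-bounded}), taking $D_k \defeq \stepsize_k(F(x_k) - F\opt)\mathbf{1}\{\tau_m > k\} \ge 0$ and $C_k \defeq \half\stepsize_k^2\growfunc(m)$. On $\{\tau_m > k\}$ we have $\dist(x_k,\mc{X}\opt) \le m$, so monotonicity of $\growfunc$ makes $C_k$ summable because $\sum_k\stepsize_k^2 < \infty$, while on $\{\tau_m \le k\}$ the process is frozen and the bound holds trivially. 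The lemma then yields, on $\{\tau_m = \infty\} = \{\sup_k\dist(x_k,\mc{X}\opt) \le m\}$, that $\ltwo{x_k - x\opt}$ converges almost surely and $\sum_k\stepsize_k(F(x_k) - F\opt) < \infty$. Taking the union over $m \in \N$ gives both conclusions on the full event $\{\sup_k\dist(x_k,\mc{X}\opt) < \infty\}$, which already proves the summability claim.

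Finally, to obtain convergence of the iterates I would first upgrade ``$\ltwo{x_k - x\opt}$ converges'' from each fixed $x\opt$ to all $x\opt \in \mc{X}\opt$ simultaneously: run the above along a countable dense subset of $\mc{X}\opt$ and pass to arbitrary $x\opt$ using $|\ltwo{x_k - x\opt} - \ltwo{x_k - y}| \le \ltwo{x\opt - y}$. Since $\sum_k\stepsize_k = \infty$ while $\sum_k\stepsize_k(F(x_k) - F\opt) < \infty$ with $F(x_k) \ge F\opt$, we get $\liminf_k F(x_k) = F\opt$; boundedness of $\{x_k\}$ then produces a subsequence $x_{k_j} \to \bar x$, and lower semicontinuity of the convex $F$ forces $F(\bar x) \le F\opt$, i.e.\ $\bar x \in \mc{X}\opt$. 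As $\ltwo{x_k - \bar x}$ converges and vanishes along $x_{k_j}$, the whole sequence obeys $x_k \cas \bar x =: x\opt$. For the value convergence I would observe that on the bounded event the iterates remain in the convex region $\{x \in \mc{X} : \dist(x,\mc{X}\opt) \le R\}$, where Assumption~\ref{assumption:very-weak-moment} and Jensen's inequality give a subgradient of $F$ of norm at most $\sqrt{\growfunc(R)}$ at every point, so $F$ is $\sqrt{\growfunc(R)}$-Lipschitz there; hence $|F(x_k) - F(x\opt)| \le \sqrt{\growfunc(R)}\,\ltwo{x_k - x\opt} \to 0$, giving $F(x_k) \cas F(x\opt)$.
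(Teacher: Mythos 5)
Your proof is correct, and while its first half tracks the paper closely, the endgame is a genuinely different argument. The recursion you derive by splitting $f_{x_k}(x_{k+1};\statrv_k) - f_{x_k}(x\opt;\statrv_k)$ and completing the square is exactly the paper's Lemma~\ref{lemma:single-recentered-progress}, and feeding it to Robbins--Siegmund is the same step the paper takes; the only difference there is that you localize with stopping times $\tau_m$, whereas the paper invokes Lemma~\ref{lemma:robbins-siegmund} directly in its ``on the event $\{\sum_k C_k < \infty\}$'' form with the random $C_k = \stepsize_k^2\growfunc(\dist(x_k,\mc{X}\opt))$ --- your localization re-derives that conditional form and is the safer move if one only trusts the deterministic-$C_k$ version. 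Where you genuinely diverge is in extracting iterate convergence. The paper first proves $\dist(x_k,\mc{X}\opt)\cas 0$ via a compactness/gap-function argument (the function $\gapfunc$), then shows $V(x\opt)=\lim_k\norm{x_k-x\opt}$ is $1$-Lipschitz and uses an $\epsilon$-net of $\mc{X}\opt\cap C$ to locate an $x\opt$ with $V(x\opt)=0$. You instead run the classical Fej\'er-monotonicity argument: $\sum_k\stepsize_k=\infty$ and $\sum_k\stepsize_k(F(x_k)-F\opt)<\infty$ force $\liminf_k F(x_k)=F\opt$, boundedness yields a subsequential limit $\bar x$, lower semicontinuity of $F$ puts $\bar x\in\mc{X}\opt$, and the simultaneous convergence of $\norm{x_k-y}$ over a countable dense subset of $\mc{X}\opt$ upgrades subsequential to full convergence. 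This bypasses both the gap function and the $\epsilon$-net, at the cost of leaning explicitly on lower semicontinuity of $F$ --- a regularity the paper's gap-function argument also implicitly needs (its infimum over a compact set must be attained), so nothing is lost. Your closing Lipschitz bound $\norm{F'(x)}\le\sqrt{\growfunc(R)}$ on the bounded convex region correctly delivers $F(x_k)\cas F(x\opt)$.
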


Proposition~\ref{proposition:convergence-from-boundedness} implies an
asymptotic convergence rate on (weighted averages of) the iterates $x_k$.
Indeed, let $\{\gamma_k\}_{k = 1}^\infty \subset \R_+$ be a non-decreasing
sequence with $\gamma_k > 0$, and $\gamma_k \to \infty$. Defining
the weighted averages $\wb{x}_k
= \sum_{i = 1}^k \gamma_i \stepsize_i x_i / (\sum_{i = 1}^k \gamma_i
\stepsize_i)$, we have
\begin{corollary}
  \label{corollary:kind-of-convergence-rate}
  Let the conditions of
  Proposition~\ref{proposition:convergence-from-boundedness} hold. Then
  with probability 1,
  \begin{equation*}
    \lim_{k \to \infty}
    \frac{1}{\gamma_k}
    \bigg(\sum_{i = 1}^k \gamma_i \stepsize_i\bigg)
    \left[F(\wb{x}_k) - F\opt\right] = 0.
  \end{equation*}
\end{corollary}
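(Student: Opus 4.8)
The plan is to reduce the statement to Kronecker's lemma via convexity. Since each $f(\cdot; \statval)$ is convex, the objective $F = \E_P[f(\cdot; \statrv)]$ is convex, and since $\mc{X}$ is convex the weighted average $\wb{x}_k$ lies in $\mc{X}$. Viewing $\wb{x}_k$ as a convex combination of $x_1, \ldots, x_k$ with weights $\gamma_i \stepsize_i / (\sum_{j=1}^k \gamma_j \stepsize_j)$, Jensen's inequality gives
\begin{equation*}
  F(\wb{x}_k) - F\opt
  \le \frac{\sum_{i=1}^k \gamma_i \stepsize_i (F(x_i) - F\opt)}{\sum_{i=1}^k \gamma_i \stepsize_i}.
\end{equation*}
Multiplying through by the positive quantity $\frac{1}{\gamma_k}\big(\sum_{i=1}^k \gamma_i \stepsize_i\big)$ cancels the denominator, and using that $F(\wb{x}_k) - F\opt \ge 0$ (because $F\opt = \inf_{x \in \mc{X}} F(x)$ and $\wb{x}_k \in \mc{X}$), I obtain the sandwich
\begin{equation*}
  0 \le \frac{1}{\gamma_k}\bigg(\sum_{i=1}^k \gamma_i \stepsize_i\bigg)\left[F(\wb{x}_k) - F\opt\right]
  \le \frac{1}{\gamma_k}\sum_{i=1}^k \gamma_i \stepsize_i (F(x_i) - F\opt).
\end{equation*}
The entire argument is carried out pathwise on the almost-sure event $\{\sup_k \dist(x_k, \mc{X}\opt) < \infty\}$, where Proposition~\ref{proposition:convergence-from-boundedness} is in force.

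It then suffices to show the right-hand upper bound tends to zero, and here I would invoke Kronecker's lemma: if $b_k$ is a nondecreasing sequence of positive reals with $b_k \to \infty$ and $\sum_i c_i$ converges, then $\frac{1}{b_k}\sum_{i=1}^k b_i c_i \to 0$. I apply this with $b_k = \gamma_k$, which is nondecreasing, positive, and divergent by hypothesis, and with $c_i = \stepsize_i (F(x_i) - F\opt) \ge 0$. The series $\sum_i c_i = \sum_i \stepsize_i (F(x_i) - F\opt)$ converges with probability one—this is exactly the first conclusion of Proposition~\ref{proposition:convergence-from-boundedness}—so Kronecker's hypotheses are met and the upper bound vanishes. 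Squeezing between $0$ and this vanishing quantity yields the claim.

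The main thing to verify is that the hypotheses align cleanly rather than that any deep analytic obstacle is overcome. One checks that the weights $\gamma_i \stepsize_i$ are nonnegative with positive partial sums (so Jensen and the division are legitimate), that $\gamma_k$ satisfies the monotonicity and divergence required by Kronecker's lemma, and that the crucial summability input $\sum_i \stepsize_i(F(x_i) - F\opt) < \infty$ is precisely what Proposition~\ref{proposition:convergence-from-boundedness} already supplies. The one subtlety worth flagging is bookkeeping: because the summability and the conclusion are almost-sure statements holding on the stability event, I keep each step—Jensen, the summability, and Kronecker's lemma—applied along individual sample paths on that event, so that the resulting limit holds with probability one.
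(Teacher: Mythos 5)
Your proof is correct and follows essentially the same route as the paper's: Jensen's inequality to bound $(\sum_{i=1}^k \gamma_i \stepsize_i)(F(\wb{x}_k) - F\opt)$ by $\sum_{i=1}^k \gamma_i \stepsize_i (F(x_i) - F\opt)$, then Kronecker's lemma applied to the almost-surely convergent series $\sum_i \stepsize_i(F(x_i) - F\opt)$ furnished by Proposition~\ref{proposition:convergence-from-boundedness}. Your extra care about carrying the argument pathwise on the stability event is a reasonable (and slightly more explicit) piece of bookkeeping than the paper's two-line version.
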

\begin{proof}
  We have $(\sum_{i = 1}^k \gamma_i \stepsize_i) (F(\wb{x}_k) - F\opt) \le
  \sum_{i = 1}^k \gamma_i \stepsize_i (F(x_i) - F\opt)$ by Jensen's
  inequality. Kronecker's lemma gives the result.
\end{proof}
\noindent
For example, taking $\gamma_k = \stepsize_k^{-1}$, we obtain
that the average $\wb{x}_k = \frac{1}{k} \sum_{i=1}^k x_i$ satisfies
\begin{equation*}
  k \stepsize_k \left(F(\wb{x}_k) - F\opt\right) \cas 0.
\end{equation*}

To prove Proposition~\ref{proposition:convergence-from-boundedness}, we
present a lemma giving a one-step progress guarantee for any method
satisfying Conditions~\ref{cond:convex-model}--\ref{cond:lower-model}.
\begin{lemma}
  \label{lemma:single-recentered-progress}
  Let Conditions~\ref{cond:convex-model}--\ref{cond:lower-model} hold and
  let $x_k$ be generated by the updates~\eqref{eqn:model-iteration}.
  Then for any $x \in \mc{X}$,
  \begin{equation*}
    \half \ltwo{x_{k + 1} - x}^2
    \le \half \ltwo{x_k - x}^2
    - \stepsize_k [f(x_k; \statrv_k) - f(x; \statrv_k)]
    + \frac{\stepsize_k^2}{2} \ltwo{f'(x_k; \statrv_k)}^2.
  \end{equation*}
\end{lemma}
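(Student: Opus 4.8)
The plan is to combine the generic one-step progress estimate of Lemma~\ref{lemma:single-step-progress} with the two modeling conditions \ref{cond:convex-model}--\ref{cond:lower-model}, which together let me replace the model values $f_{x_k}$ by the true function values $f$ in the right places. First I would apply Lemma~\ref{lemma:single-step-progress} at the arbitrary point $x \in \mc{X}$, giving
\begin{equation*}
  \half \ltwo{x_{k+1} - x}^2
  \le \half \ltwo{x_k - x}^2
  - \stepsize_k\left[f_{x_k}(x_{k+1}; \statrv_k) - f_{x_k}(x; \statrv_k)\right]
  - \half \ltwo{x_k - x_{k+1}}^2.
\end{equation*}
The goal is to turn the model difference $f_{x_k}(x_{k+1}) - f_{x_k}(x)$ into the true difference $f(x_k; \statrv_k) - f(x; \statrv_k)$, at the cost of the gradient-norm term.

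The key step is to bound $-[f_{x_k}(x_{k+1}; \statrv_k) - f_{x_k}(x; \statrv_k)]$ from above. I would handle the two halves separately. For the $f_{x_k}(x; \statrv_k)$ term, condition~\ref{cond:lower-model} gives $f_{x_k}(x; \statrv_k) \le f(x; \statrv_k)$, which pushes in the favorable direction. For the $-f_{x_k}(x_{k+1}; \statrv_k)$ term I would lower-bound $f_{x_k}(x_{k+1}; \statrv_k)$ using convexity of the model around its center $x_k$: taking $g_k \in \partial_y f_{x_k}(y; \statrv_k)|_{y = x_k}$, condition~\ref{cond:convex-model} together with the equality $f_{x_k}(x_k; \statrv_k) = f(x_k; \statrv_k)$ from~\ref{cond:lower-model} yields
\begin{equation*}
  f_{x_k}(x_{k+1}; \statrv_k) \ge f(x_k; \statrv_k) + \<g_k, x_{k+1} - x_k\>.
\end{equation*}
By the subgradient containment~\eqref{eqn:subgrad-containment}, $g_k \in \partial f(x_k; \statrv_k)$, so I may simply write $g_k = f'(x_k; \statrv_k)$. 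Substituting both bounds, the model difference is controlled by
\begin{equation*}
  f_{x_k}(x_{k+1}; \statrv_k) - f_{x_k}(x; \statrv_k)
  \ge f(x_k; \statrv_k) - f(x; \statrv_k) + \<f'(x_k; \statrv_k), x_{k+1} - x_k\>.
\end{equation*}

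Plugging this into the progress estimate, the desired true-function gap $-\stepsize_k[f(x_k; \statrv_k) - f(x; \statrv_k)]$ appears, and what remains is the cross term $-\stepsize_k \<f'(x_k; \statrv_k), x_{k+1} - x_k\> - \half \ltwo{x_k - x_{k+1}}^2$. The final step is a routine completion-of-squares / Young's inequality bound, using $-\stepsize_k \<f', x_{k+1} - x_k\> \le \half \ltwo{x_{k+1} - x_k}^2 + \frac{\stepsize_k^2}{2} \ltwo{f'}^2$, so that the negative $-\half\ltwo{x_k - x_{k+1}}^2$ term cancels the positive square and only $\frac{\stepsize_k^2}{2}\ltwo{f'(x_k; \statrv_k)}^2$ survives, giving exactly the stated inequality. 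I do not expect a genuine obstacle here: the only subtlety is being careful that the subgradient $g_k$ extracted from the \emph{model} is legitimately a subgradient of the \emph{true} function $f(\cdot; \statrv_k)$ at $x_k$, which is precisely guaranteed by the containment~\eqref{eqn:subgrad-containment}; everything else is convexity bookkeeping and Young's inequality.
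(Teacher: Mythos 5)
Your proposal is correct and follows essentially the same route as the paper's proof: both start from Lemma~\ref{lemma:single-step-progress}, use the model's subgradient inequality at the center $x_k$ together with $f_{x_k}(x_k;\statrv_k) = f(x_k;\statrv_k)$ and the lower-bound property $f_{x_k}(x;\statrv_k) \le f(x;\statrv_k)$ to pass from model values to true function values, invoke the containment~\eqref{eqn:subgrad-containment} to identify the model subgradient with $f'(x_k;\statrv_k)$, and finish with Young's inequality to absorb the cross term against $-\half\ltwo{x_k - x_{k+1}}^2$. No gaps.
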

\begin{proof}
  Using Lemma~\ref{lemma:single-step-progress}, it suffices to
  show that for any $\stepsize > 0$ and $x_0, x_1, x \in \mc{X}$
  \begin{equation*}
    -\stepsize [f_{x_0}(x_1; \statval) - f_{x_0}(x; \statval)]
    - \half \ltwo{x_1 - x_0}^2
    \le -\stepsize [f(x_0; \statval) - f(x; \statval)]
    + \frac{\stepsize^2}{2} \ltwo{f'(x_0; \statval)}^2.
  \end{equation*}
  To see this, recall that
  Conditions~\ref{cond:convex-model}--\ref{cond:lower-model} imply the
  containment~\eqref{eqn:subgrad-containment}, which in turn implies
  \begin{align*}
    -f_{x_0}(x_1; \statval) + f_{x_0}(x; \statval)
    & = -[f_{x_0}(x_0; \statval) - f_{x_0}(x; \statval)]
    + f_{x_0}(x_0; \statval) - f_{x_0}(x_1; \statval) \\
    & \le
    -[f_{x_0}(x_0; \statval) - f_{x_0}(x; \statval)]
    + \<f'(x_0;\statval), x_0 - x_1\> \\
    & \stackrel{\ref{cond:lower-model}}{\le}
    -[f(x_0;\statval) - f(x; \statval)]
    + \<f'(x_0;\statval), x_0 - x_1\>.
  \end{align*}
  Then we use that for any vector $v$,
  $\stepsize \<v, \Delta\> - \half \ltwo{\Delta}^2 \le
  \frac{\stepsize^2}{2} \ltwo{v}^2$, which
  gives the result.
\end{proof}

\begin{proof-of-proposition}[\ref{proposition:convergence-from-boundedness}]
  By Assumption~\ref{assumption:very-weak-moment}, $\E[\norm{f'(x;
      \statrv)}^2] \le \growfunc(r)$ for all $x$ such that $\dist(x,
  \mc{X}\opt) \le r$.  Lemma~\ref{lemma:single-recentered-progress} implies
  that for any $x\opt \in\mc{X}\opt$,
  \begin{align}
    \label{eqn:single-dingle}
    \E[\norms{x_{k+1} - x\opt}^2 \mid \mc{F}_{k-1}]
    & \le \norms{x_k - x\opt}^2 - 2 \stepsize_k (F(x_k) - F\opt)
    + \stepsize_k^2
    \growfunc(\dist(x_k, \mc{X}\opt)).
  \end{align}
  On the event that $\sup_k \dist(x_k, \mc{X}\opt) < \infty$, we have
  $\sum_k \stepsize_k^2 \growfunc(\dist(x_k, \mc{X}\opt)) < \infty$, and so
  the Robbins-Siegmund Lemma~\ref{lemma:robbins-siegmund} implies
  for any $x\opt \in \mc{X}\opt$,
  there is some (random) $V(x\opt) < \infty$ such that
  $\norms{x_k - x\opt} \cas V(x\opt)$, and
  $\sum_k \stepsize_k (F(x_k) - F(x\opt)) < \infty$.
  
  We now show that $\dist(x_k, \mc{X}\opt) \cas 0$.
  Letting $x\opt$ be the
  projection of $x_k$ onto $\mc{X}\opt$,
  inequality~\eqref{eqn:single-dingle} gives $\E[\dist(x_{k+1},
    \mc{X}\opt)^2 \mid \mc{F}_{k-1}] \le \dist(x_k, \mc{X}\opt)^2 - 2
  \stepsize_k(F(x_k) - F\opt) + \stepsize_k^2 \growfunc(\dist(x_k,
  \mc{X}\opt))$. Thus using Lemma~\ref{lemma:robbins-siegmund}, there exists
  a random $D_\infty < \infty$ such that $\dist(x_k, \mc{X}\opt) \cas
  D_\infty$.
  For $\epsilon_i \in \R_+$, define the gap function
  \begin{equation*}
    \gapfunc(x\opt, \epsilon_1, \epsilon_2)
    \defeq \inf_{x \in \mc{X}} \left\{F(x) - F\opt
    \mid \epsilon_1 \le \norm{x - x\opt} \le 4 \epsilon_1,
    \dist(x, \mc{X}\opt) \ge \epsilon_2
    \right\}.
  \end{equation*}
  The set $\{x \in \mc{X} \mid \norm{x - x\opt} \in
  [\epsilon_1, 4\epsilon_1], \dist(x, \mc{X}\opt) \ge \epsilon_2\}$ is
  compact, so the infimum in the definition of the gap $\gapfunc$ is
  attained and $\gapfunc(x\opt, \epsilon_1, \epsilon_2) > 0$ for
  $\epsilon_i > 0$.  When the a.s.\ limits
  $V(x\opt)$ and $D_\infty$ are positive,
  there exists a (random) $K <\infty$ such that
  $\norm{x_k - x\opt} \in [V(x\opt)/2, 2 V(x\opt)]$ and
  $\dist(x_k, \mc{X}\opt) \ge D_\infty / 2$ for all
  $k \ge K$. Thus with probability 1,
  \begin{equation*}
    \infty > \sum_k \stepsize_k (F(x_k) - F\opt)
    \ge \sum_{k \ge K} \stepsize_k \gapfunc(x\opt, V(x\opt) / 2, D_\infty / 2).
  \end{equation*}
  As $\sum_k \stepsize_k = \infty$ and
  $\gapfunc(x\opt, \epsilon_1, \epsilon_2) > 0$ for
  $\epsilon_i > 0$, we have $D_\infty = 0$
  with probability 1.

  Finally, we show the sequence $x_k$ converges.
  We begin by noting
  that $V(x\opt) = \lim_k \norm{x_k - x\opt}$ is $1$-Lipschitz.
  Indeed, let $x_1\opt, x_2\opt \in \mc{X}\opt$. By
  the a.s.\ limit definition of $V$,
  \begin{equation*}
    \left|V(x_1\opt) - V(x_2\opt)\right|
    \le
    \limsup_k \left|\norm{x_k - x_1\opt} - \norm{x_k - x_2\opt}\right|
    \le \norm{x_1\opt - x_2\opt}.
  \end{equation*}
  Let us now show that for some $x\opt \in \mc{X}\opt$, we have $V(x\opt) =
  0$.  Let $\ball = \{x \in \R^n \mid \norm{x} \le 1\}$ be the ball.  As
  $V(x\opt) < \infty$, there exists a (random) $r < \infty$ such that the
  $x_k \in r \ball$ for all $k$. As $\dist(x_k, \mc{X}\opt) \cas 0$, the
  compactness of $\ball$ implies that $r \ball \cap \mc{X}\opt \neq
  \emptyset$.  Let $x\opt \in r\ball \cap \mc{X}\opt$.  For any $x \in
  r\ball$, the projection $\pi(x)$ of $x$ onto $\mc{X}\opt$ satisfies
  $\norm{\pi(x) - x} \le \norm{x\opt - x} \le 2r$, and so $\norm{\pi(x)} \le
  3r$, and defining the set $C = 3r \ball$,
  we have $\dist(x_k, \mc{X}\opt) = \dist(x_k, \mc{X}\opt \cap C)$ for all $k$.
  Now, fix $\epsilon > 0$, and let $\{x_i\opt\}_{i = 1}^N$ be
  an $\epsilon$-net of $C \cap \mc{X}\opt$, where $N < \infty$.
  As $x_k \in C$ for all $k$,
  \begin{equation*}
    \min_{i \in [N]}
    \norm{x_k - x_i\opt} - \epsilon
    \le \dist(x_k, \mc{X}\opt \cap C)
    = \dist(x_k, \mc{X}\opt) \cas 0,
  \end{equation*}
  and $\min_{i \in [N]} \norm{x_k - x_i\opt} \cas \min_{i \in [N]}
  V(x_i\opt)$. Thus for any $\epsilon > 0$,
  there exists $x_\epsilon \in \mc{X}\opt \cap C$ such that
  $V(x_\epsilon) \le \epsilon$, so that
  $\inf_{x \in C \cap \mc{X}\opt} V(x) = 0$. By the continuity of $V$,
  the infimum is attained and there is
  $x\opt$ such that $V(x\opt) = 0$. The sequence $x_k$ thus converges to
  this $x\opt \in \mc{X}\opt$,
  and $F(x_k) \cas F(x\opt) = F\opt$.
\end{proof-of-proposition}

\subsection{Asymptotic normality}
\label{sec:asymptotic-normality}

Without additional conditions, it is challenging to provide more precise
convergence rate guarantees than those of
Corollary~\ref{corollary:kind-of-convergence-rate}, which (at best) provides
an asymptotic rate scaling as $1 / (\stepsize_k k)$ for $\stepsize_k \asymp
k^{-\steppow}$. With this in mind, we introduce an additional assumption of
smoothness and strong convexity in a neighborhood of the optimal point, and
we consider distributional convergence to establish asymptotic normality of
the averaged iterates, extending results of \citet{PolyakJu92}.  In most
analyses of stochastic convex optimization problems yielding asymptotic
normality, typical assumptions are that the random functions $f(\cdot;
\statval)$ have globally Lipschitz gradients
(e.g.~\cite[Sec.~5]{PolyakJu92}, \cite{GhadimiLa13, Lan12, RyuBo14}), which
is reasonable when $\mc{X}$ is compact, but may fail for non-compact
$\mc{X}$.
In contrast, we consider
\begin{assumption}
  \label{assumption:weak-lipschitz-gradient}
  The functions $F$ and $f$ satisfy the following.
  \begin{enumerate}[label=(\roman*),leftmargin=*]
  \item \label{item:local-strong-convexity}
    The function $F$ is $\mc{C}^2$ in a neighborhood of $x\opt =
    \argmin_{x \in \mc{X}} F(x)$, and $\nabla^2 F(x\opt) \succ 0$.
  \item \label{item:neighbor-smooth}
    There exists $\epsilon > 0$ such that $f(\cdot; \statval)$ is
    $\lipgrad(\statval)$-smooth on the set $\mc{X}\opt_\epsilon \defeq \{x
    \mid \norm{x - x\opt} \le \epsilon\}$, meaning that $x \mapsto \nabla
    f(x; \statval)$ is $\lipgrad(\statval)$ Lipschitz on
    $\mc{X}\opt_\epsilon$, and $\E[\lipgrad(\statrv)^2] = \lipgrad^2 <
    \infty$.
  \end{enumerate}
\end{assumption}

Assumption~\ref{assumption:weak-lipschitz-gradient} says that in
a neighborhood of $x\opt$, the random functions $f$ have Lipschitz gradients
with probability 1. We will apply
Assumption~\ref{assumption:weak-lipschitz-gradient} in conjunction with
Assumption~\ref{assumption:very-weak-moment}, which enforces a type of local
Lipschitz continuity of $f$ and $F$.  Typical Lipschitz conditions on
$\nabla f$ imply
Assumption~\ref{assumption:very-weak-moment}: if
$\nabla f(\cdot; \statval)$ is $\lipgrad_r(\statval)$ Lipschitz on
$\mc{X}_r\opt = \{x \in \mc{X} \mid \norm{x - x\opt} \le r\}$, where the
smoothness constant $\lipgrad_r$ is square integrable for finite $r$,
Assumption~\ref{assumption:very-weak-moment} holds whenever
$\E[\norm{\nabla f(x\opt; \statrv)}^2] < \infty$. Indeed,
\begin{equation*}
  \norm{\nabla f(x; \statval)}
  \le \norm{\nabla f(x\opt; \statval)}
  + \norm{\nabla f(x\opt; \statval) - \nabla f(x;\statval)}
  \le \norm{\nabla f(x\opt; \statval)}
  + \lipgrad_r(\statval) \norm{x - x\opt},
\end{equation*}
so we have the moment bound $\growfunc(r) \le 2
\E[\norm{\nabla f(x\opt; \statrv)}^2] + 2 \E[\lipgrad_r(\statrv)^2] r^2$.
To further motivate Assumption~\ref{assumption:weak-lipschitz-gradient},
we provide a brief example.

\begin{example}[Poisson regression]
  \label{example:poisson-regression}
  In problems with count data $b_1, b_2, \ldots, b_m \in \N$, we may wish to
  predict counts based on a covariate vector $a_i \in \R^n$.  A standard
  model is poisson regression, a generalized linear
  model~\cite{McCullaghNe89, HastieTiFr09}, where we model $b \in \N$
  conditional on $a \in \R^n$ as coming from a poisson distribution
  with parameter $\lambda = e^{\<a, x\>}$ so $p(b \mid a, x) = e^{-\lambda}
  \lambda^b / b!$. The negative log likelihood is $f(x; (a, b)) = -\log
  p(b \mid a, x) = \log (b!) + \exp(\<a, x\>) - b \<a, x\>$, and
  it is easy to compute the truncated model~\eqref{eqn:trunc-model}, as $f$
  satisfies
  \begin{equation*}
    \inf_z f(x; (a, b)) = \log(b!) + \inf_t \{e^t - bt\}
    = \log(b!) + b - b \log b.
  \end{equation*}
  %
  Because $f'(x; (a, b)) = a e^{\<a, x\>} - b a$,
  using $|e^t - e^s| \le e^{\max\{s,t\}} |s - t|$
  shows that $f$ satisfies Assumption~\ref{assumption:weak-lipschitz-gradient}
  as soon as we have the covariance condition $\cov(a) \succ 0$
  and $\E[e^{r \ltwo{a}}] < \infty$ for $r < \infty$. Such moment
  conditions are not sufficient for SGM
  to converge.
\end{example}

We have the following asymptotic normality
theorem; we present the proof in Appendix~\ref{sec:proof-asymptotic-normality}.
\begin{theorem}
  \label{theorem:always-asymptotic-normality}
  Let
  Assumptions~\ref{assumption:convex}--\ref{assumption:weak-lipschitz-gradient}
  hold.  Let the iterates $x_k$ be generated by any method satisfying
  Conditions~\ref{cond:convex-model}--\ref{cond:lower-model} with
  stepsizes $\stepsize_k = \stepsize_0 k^{-\steppow}$ for some $\steppow \in
  (\half, 1)$ and $\stepsize_0 > 0$.  Assume additionally that the iterates
  are bounded: with probability 1, $\sup_k \norm{x_k} < \infty$.  Then
  \begin{equation*}
    \frac{1}{\sqrt{k}}
    \sum_{i = 1}^k (x_i - x\opt) \cd \normal\big(0,
    \nabla^2 F(x\opt)^{-1} \cov(\nabla f(x\opt; \statrv))
    \nabla^2 F(x\opt)^{-1} \big).
  \end{equation*}
\end{theorem}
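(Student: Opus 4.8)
The plan is to reduce the \aProx recursion near $x\opt$ to the canonical linearized stochastic-approximation form and then invoke the Polyak--Juditsky averaging argument, so that the sandwich covariance $\nabla^2 F(x\opt)^{-1} \cov(\nabla f(x\opt;\statrv)) \nabla^2 F(x\opt)^{-1}$ emerges. First I would record that, since $\nabla^2 F(x\opt) \succ 0$ (Assumption~\ref{assumption:weak-lipschitz-gradient}\ref{item:local-strong-convexity}), the minimizer is unique, so $\mc{X}\opt = \{x\opt\}$; combined with the hypothesis $\sup_k \norm{x_k} < \infty$, Proposition~\ref{proposition:convergence-from-boundedness} gives $x_k \cas x\opt$. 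Hence, with probability one there is a (random) index past which every $x_k$ lies in the neighborhood $\mc{X}\opt_\epsilon$, where $f(\cdot;\statval)$ is $\lipgrad(\statval)$-smooth and $x\opt$ is interior. For such $k$ the constraint in~\eqref{eqn:model-iteration} is inactive and the update is the unconstrained step $x_{k+1} = x_k - \stepsize_k g_k$ with $g_k \in \partial f_{x_k}(x_{k+1};\statrv_k)$, as in the proof of Lemma~\ref{lemma:single-step-progress}.

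The crucial step, which I expect to be the main obstacle, is to control the model subgradient evaluated at the \emph{updated} point in terms of the true gradient at the \emph{center}: $g_k = \nabla f(x_k;\statrv_k) + e_k$ with a higher-order error $e_k$. By the containment~\eqref{eqn:subgrad-containment} and smoothness, the model subgradient at its own center is exactly $\nabla f(x_k;\statrv_k)$, and Condition~\ref{cond:lower-model} gives the equality $f_{x_k}(x_k;\statrv_k) = f(x_k;\statrv_k)$ together with $f_{x_k} \le f$. Thus the convex function $\phi(y) \defeq f_{x_k}(y;\statrv_k) - f(x_k;\statrv_k) - \<\nabla f(x_k;\statrv_k), y - x_k\>$ is squeezed: $0 \le \phi(y) \le \frac{\lipgrad(\statrv_k)}{2}\ltwo{y - x_k}^2$, with $\phi(x_k)=0$. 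For a nonnegative convex function pinned at zero under such a quadratic majorant, comparing $\phi(y + t u)$ with the majorant and optimizing over $t>0$ shows every subgradient at $y$ has norm at most $2\lipgrad(\statrv_k)\ltwo{y - x_k}$. Evaluating at $y = x_{k+1}$ yields $\ltwo{e_k} \le 2\lipgrad(\statrv_k)\ltwo{x_{k+1} - x_k} = 2\stepsize_k\lipgrad(\statrv_k)\ltwo{g_k}$, so that $e_k$ is of smaller order than the step itself once $\stepsize_k$ is small. This is precisely the estimate that lets inexact models (bundle, truncated) inherit the optimal covariance.

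With this in hand I would linearize. Writing $\Delta_k \defeq x_k - x\opt$ and $H \defeq \nabla^2 F(x\opt)$, decompose $\nabla f(x_k;\statrv_k) = \nabla F(x_k) + \zeta_k$, where $\zeta_k \defeq \nabla f(x_k;\statrv_k) - \nabla F(x_k)$ is a martingale difference, since $x_k$ is $\mc{F}_{k-1}$-measurable and $\E[\nabla f(x;\statrv)] = \nabla F(x)$ on $\mc{X}\opt_\epsilon$. Taylor-expanding $\nabla F(x_k) = H\Delta_k + \rho_k$ with $\ltwo{\rho_k} = o(\ltwo{\Delta_k})$ (using $F \in \mc{C}^2$ near $x\opt$ and $\nabla F(x\opt) = 0$), the recursion takes the canonical form
\[
  \Delta_{k+1} = (I - \stepsize_k H)\Delta_k - \stepsize_k \zeta_k - \stepsize_k(\rho_k + e_k),
\]
with contraction $I - \stepsize_k H$ (valid as $H \succ 0$), martingale noise $\zeta_k$, and remainders $\rho_k + e_k$.

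Finally I would verify the Polyak--Juditsky hypotheses and invoke the averaging theorem. A preliminary mean-square rate $\E[\ltwo{\Delta_k}^2] = O(\stepsize_k)$ follows by iterating the recursion against the contraction, using that $\E[\ltwo{\zeta_k}^2]$ is bounded once $x_k \in \mc{X}\opt_\epsilon$ (Assumptions~\ref{assumption:convex} and~\ref{assumption:very-weak-moment} restricted to the neighborhood). This rate makes the scaled remainder $\frac{1}{\sqrt{k}}\sum_{i \le k}(\rho_i + e_i)$ vanish in probability: $\ltwo{\rho_i} = o(\ltwo{\Delta_i})$ is genuinely smaller order, while $\ltwo{e_i} \le 2\stepsize_i \lipgrad(\statrv_i)\ltwo{g_i}$ is controlled through $\E[\lipgrad(\statrv)^2] < \infty$ and $\Delta_i \to 0$; the restriction $\steppow \in (\half, 1)$ is what annihilates these bias terms after averaging. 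The conditional covariance converges, $\E[\zeta_k \zeta_k^\top \mid \mc{F}_{k-1}] \to \cov(\nabla f(x\opt;\statrv))$, by continuity of $x \mapsto \cov(\nabla f(x;\statrv))$ at $x\opt$ (Assumption~\ref{assumption:weak-lipschitz-gradient}\ref{item:neighbor-smooth} with dominated convergence) and $x_k \cas x\opt$, and the accompanying Lindeberg condition follows from the square-integrability of the local smoothness constant. The averaging theorem then delivers $\frac{1}{\sqrt{k}}\sum_{i=1}^k \Delta_i \cd \normal\big(0, H^{-1}\cov(\nabla f(x\opt;\statrv)) H^{-1}\big)$, the claim. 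The delicate points throughout are that the noise moments are only controlled locally---so one genuinely needs the almost-sure entry into $\mc{X}\opt_\epsilon$ before any linear analysis begins---and the model-gradient bound of the second paragraph.
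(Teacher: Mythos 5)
Your proposal follows the same architecture as the paper's proof: linearize the implicit update into a Polyak--Juditsky recursion, bound the model-subgradient error at the updated point by $2\lipgrad(\statrv_k)\norm{x_{k+1}-x_k} = 2\stepsize_k\lipgrad(\statrv_k)\norm{g_k}$, establish a preliminary mean-square rate, kill the remainders under $\sqrt{k}$-averaging, and invoke the averaging theorem. Two variations are worth recording. First, where the paper cites the gradient-comparison theorem of Davis et al.\ (Lemma~\ref{lemma:gradient-comparison}), you derive the same bound directly from the squeeze $0 \le f_{x_k}(y;\statval) - f(x_k;\statval) - \<\nabla f(x_k;\statval), y - x_k\> \le \frac{\lipgrad(\statval)}{2}\norm{y-x_k}^2$; this is correct (optimizing over $t$ yields exactly the factor $2$) and more self-contained, provided you note that the point at which you evaluate the quadratic majorant stays inside the smoothness neighborhood, which it does since the optimizing $t$ is at most $\norm{y - x_k}$. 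Second, you keep the full noise $\zeta_k = \nabla f(x_k;\statrv_k) - \nabla F(x_k)$ and appeal to a martingale CLT with converging conditional covariances, whereas the paper splits off the exact i.i.d.\ term $\nabla f(x\opt;\statrv_k)$ and shows the recentered difference is negligible; both routes work. Two points need tightening before the argument closes. The Taylor remainder must be controlled at quadratic order $O(\norm{\Delta_k}^2)$, as the paper asserts for $R(x)$, and not merely $o(\norm{\Delta_k})$ as you write: with $\E[\norm{\Delta_i}^2] \lesssim \stepsize_i \log i$ one gets $\sum_{i\le k}\E[\norm{\Delta_i}] \asymp k^{1-\steppow/2}$ up to logarithms, which is \emph{not} $o(\sqrt{k})$ for $\steppow < 1$, so a remainder that is only sublinear in $\norm{\Delta_k}$ does not vanish after averaging; the quadratic bound does, since $\sum_{i\le k}\norm{\Delta_i}^2 \lesssim k^{1-\steppow}\log k = o(\sqrt{k})$ once $\steppow > \half$. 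And the preliminary rate cannot be obtained unconditionally, because the moment bounds of Assumptions~\ref{assumption:very-weak-moment} and~\ref{assumption:weak-lipschitz-gradient} are only local; the paper makes this rigorous with the stopping times~\eqref{eqn:stopping-time} and the truncated recursion~\eqref{eqn:recurse-delta-stops}, a device your sketch flags as the delicate point but does not actually carry out.
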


To prove Theorem~\ref{theorem:always-asymptotic-normality}, we
use two main insights. The first is
that, if the iterates remain bounded, then
Proposition~\ref{proposition:convergence-from-boundedness} guarantees
convergence. The second is a gradient approximation result that shows that
even if the models in the iterations~\eqref{eqn:model-iteration} are
non-smooth, they locally behave as first-order Taylor approximations to the
functions $f$, and thus eventually the iterates approximate the stochastic
gradient method on quadratics. From this, we can apply the
techniques of \citet{PolyakJu92} to guarantee asymptotic normality.

The asymptotic convergence guarantee in
Theorem~\ref{theorem:always-asymptotic-normality} is unimprovable. It
achieves the local asymptotic minimax bound for stochastic
optimization~\cite{DuchiRu19} (the analogue of the standard Fisher
information in classical statistical problems~\cite[Sec.~8.7]{LeCamYa00,
  VanDerVaart98}).  In contrast to stochastic gradient
schemes, however, Theorem~\ref{theorem:always-asymptotic-normality} requires
essentially \emph{only} that the iterates remain bounded.
In the end, then, the important consequence of the \aProx family is that,
with appropriately accurate models, we can guarantee stability
(Definition~\ref{definition:stability}). By leveraging these stability
guarantees, we can then show that model-based iteration
schemes~\eqref{eqn:model-iteration} are (asymptotically) optimal.


\section{Fast convergence for easy problems}
\label{sec:aProx-adv-fast-conv}

The stability and asymptotic results in Section~\ref{sec:stability} provide
some evidence for the benefits of using better models within stochastic
optimization problems: if the models are accurate enough that the iterates
remain bounded, then we obtain asymptotic optimality results
even when standard gradient methods diverge.
In this section, we study a different collection of problems, which
we term \emph{easy} optimization problems.
More precisely, we say that a stochastic minimization problem is easy if
there are \emph{shared} global minimizers:
\begin{definition}
  \label{definition:easy-problems}
  Let $F(x) \defeq \E_P[f(x; \statrv)]$. Then $F$ is \emph{easy
    to optimize} if for each $x\opt \in \mc{X}\opt
  \defeq \argmin_{x \in \mc{X}} F(x)$
  and $P$-almost all $\statval \in \statdomain$ we have
  \begin{equation*}
    \inf_{x \in \mc{X}} f(x; \statval) = f(x\opt; \statval).
  \end{equation*}
\end{definition}

Definition~\ref{definition:easy-problems} places strong restrictions on the
class of functions we consider, but a number of examples satisfy its
conditions.
Other researchers have considered similar
conditions to Definition~\ref{definition:easy-problems}; for example,
\citet{SchmidtLe13} study stochastic optimization problems of the form $F(x)
= \frac{1}{m} \sum_{i = 1}^m f_i(x)$ where $\nabla f_i(x\opt) = 0$ for all
$i$.
\iftoggle{easyexamples}{%
  We briefly enumerate a few examples to motivate what follows,
  detailing them more carefully in the long version~\cite[Sec.~4.3]{AsiDu18}.    
}{%
  We briefly enumerate a few examples to motivate what follows,
  returning in Section~\ref{sec:easy-examples} to flesh them out fully.
} 

\begin{example}[Overdetermined linear systems]
  \label{example:overdetermined-linear}
  In an overdetermined linear system, we have a matrix $A \in \R^{m \times n}$,
  with $m \ge n$, and we wish to solve $Ax = b$, where we assume the system
  of equalities is feasible. Letting $a_i \in \R^n$
  denote the rows of $A$, both objectives $F(x) = \frac{1}{2m} \ltwo{Ax -
    b}^2$ and $F(x) = \frac{1}{m} \lone{Ax - b}$ satisfy
  Definition~\ref{definition:easy-problems}, where we take samples $\statval
  = (a_i, b_i) \in \R^n \times \R$, and any solution
  $x\opt$ to $Ax = b$ satisfies $f(x\opt; (a_i, b_i)) = 0$ for all $i$. 
  \iftoggle{easyexamples}{%
  }{%
    See Section~\ref{sec:kaczmarz}.
  } 
\end{example}

\begin{example}[Finding a point in the intersection of convex sets]
  \label{example:convex-intersection}
  Let $C_1, C_2, \ldots, C_m$ be closed convex sets with
  non-empty intersection
  $\mc{X}\opt \defeq \cap_{i = 1}^m C_i$.  Then the objective
  $F(x) = \frac{1}{m} \sum_{i = 1}^m \dist(x, C_i)$ is convex, and treating
  the sample space $\statdomain = \{1, \ldots, m\}$, we have $F(x) = 0$ and
  $f(x; i) \defeq \dist(x, C_i) = 0$ for all $i$ if and only if $x \in
  \mc{X}\opt$. 
  \iftoggle{easyexamples}{%
  }{%
    See Section~\ref{sec:random-projections}.
  } 
\end{example}

\begin{example}[Data interpolation]
  \label{example:interpolation}
  A more involved example arises out of recent results in statistical
  machine learning. In this area, substantial recent success in deep
  learning~\cite{LeCunBeHi15, ZhangBeHaReVi17} arises out of models that fit
  a training sample of data \emph{perfectly}. In settings more amenable to
  analysis, \citet{BelkinHsMi18, BelkinRaTs18} study statistical algorithms
  that minimize convex losses and perfectly interpolate the data, that is,
  given a sample $\{\statrv_1, \ldots, \statrv_m\}$ drawn i.i.d.\ from an
  underlying population, they find points $x\opt$ satisfying $f(x\opt;
  \statrv_i) = \inf_x f(x; \statrv_i)$ for each $i = 1, \ldots, m$.
  \iftoggle{easyexamples}{%
  }{%
    See Section~\ref{sec:data-interpolation}.
  } 
\end{example}

Given Examples~\ref{example:overdetermined-linear},
\ref{example:convex-intersection}, and \ref{example:interpolation}, it is of
interest to investigate the \aProx family
for problems satisfying
Definition~\ref{definition:easy-problems}.  In this case, we show that any
model satisfying the local approximation
conditions~\ref{cond:convex-model}--\ref{cond:lower-model} and the
additional lower bound condition~\ref{cond:lower-by-optimal} possesses
strong adaptivity and convergence properties. This is in contrast to
subgradient methods, which (given a precise stepsize choice) can
exhibit fast convergence, but are typically non-adaptive.

To highlight the types of models we consider, without loss of generality, we
may assume that $\inf_{x \in \mc{X}} f(x; \statval) = 0$, as given an oracle
that provides the value $\inf f(\cdot; \statval)$ we can replace $f$
with $f(\cdot; \statval) - \inf f(\cdot; \statval)$. As we
discuss in Section~\ref{sec:aProx} and Example~\ref{example:poisson-regression},
it is frequently easy to compute the infimum $\inf_z f(z; \statval)$ for
an individual sample $\statval$.
The
results in this section thus apply to the lower-truncated
model~\eqref{eqn:trunc-model},
\begin{equation*}
  f_x(y; \statval) \defeq \hinge{f(x; \statval) + \<f'(x; \statval),
    y - x\>}.
\end{equation*}
The updates for this model are easy to compute when $\mc{X} = \R^n$;
indeed, in this case, the guarded model~\eqref{eqn:trunc-model}
yields the update
\begin{equation*}
  x_{k + 1} = x_k - \min\left\{\stepsize_k, \frac{f(x_k;\statrv_k)}{
    \ltwo{f'(x_k; \statrv_k)}^2}\right\} f'(x_k ; \statrv_k).
\end{equation*}
This update is reminiscent of the classical Polyak subgradient
method~\cite{Polyak87}, which chooses ``optimal'' stepsizes in the
subgradient method when the value $F(x\opt)$ is known.

In the remainder of this section, we analyze the performance of the \aProx
family of models on easy problems, showing that in a number of settings,
these methods even enjoy linear convergence.
The starting point of each of our results is the following lemma,
whose proof we defer to Section~\ref{sec:proof-shared-min-progress},
that shows that if a problem has shared minimizers as
in Definition~\ref{definition:easy-problems}, iterates of
any method satisfying the lower bound condition~\ref{cond:lower-by-optimal}
are guaranteed to make progress toward the optimal set.
\begin{lemma}
  \label{lemma:shared-min-progress}
  Let $F$ be easy to optimize
  (Definition~\ref{definition:easy-problems}). Let $x_k$ be generated by the
  updates~\eqref{eqn:model-iteration} using a model satisfying
  Conditions~\ref{cond:convex-model}--\ref{cond:lower-by-optimal}. Then for
  any $x\opt \in \mc{X}\opt$,
  \begin{equation*}
    \half \ltwo{x_{k+1} - x\opt}^2
    \le \half \ltwo{x_k - x\opt}^2
    - \half [f(x_k; \statrv_k) - f(x\opt; \statrv_k)]\min\left\{
    \stepsize_k,
    \frac{f(x_k; \statrv_k) - f(x\opt; \statrv_k)}{
      \ltwo{f'(x_k; \statrv_k)}^2}\right\}.
  \end{equation*}
\end{lemma}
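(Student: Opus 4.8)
The plan is to reduce the statement to a one-dimensional calculation on the optimal value of the proximal subproblem. Write $\statval = \statrv_k$, $\stepsize = \stepsize_k$, and $\phi(x) \defeq f_{x_k}(x;\statval) + \frac{1}{2\stepsize}\ltwo{x - x_k}^2$, so that $x_{k+1}$ minimizes $\phi$ over $\mc{X}$. First I would invoke Lemma~\ref{lemma:single-step-progress} with $x = x\opt$ and rewrite its right-hand side in terms of $\phi(x_{k+1})$. Since the lower model condition~\ref{cond:lower-model} gives $f_{x_k}(x\opt;\statval) \le f(x\opt;\statval)$, and since $\stepsize f_{x_k}(x_{k+1};\statval) + \half\ltwo{x_{k+1}-x_k}^2 = \stepsize\,\phi(x_{k+1})$, this yields
\[
\half\ltwo{x_{k+1}-x\opt}^2 \le \half\ltwo{x_k - x\opt}^2 - \stepsize\left[\phi(x_{k+1}) - f(x\opt;\statval)\right].
\]
This isolates the entire task into lower bounding the optimal proximal value $\phi(x_{k+1})$ relative to $f(x\opt;\statval)$.

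The key step is to minorize the model $f_{x_k}(\cdot;\statval)$ by a truncated affine function and minimize explicitly. Let $g \in \partial f_{x_k}(x_k;\statval)$; by the subgradient containment~\eqref{eqn:subgrad-containment} we have $g \in \partial f(x_k;\statval)$, and we set $f'(x_k;\statval) = g$. Convexity (Condition~\ref{cond:convex-model}) together with $f_{x_k}(x_k;\statval) = f(x_k;\statval)$ gives the affine minorant $f_{x_k}(y;\statval) \ge f(x_k;\statval) + \<g, y - x_k\>$, while Condition~\ref{cond:lower-by-optimal} and the easy-problem identity $\inf_{z\in\mc{X}} f(z;\statval) = f(x\opt;\statval)$ (Definition~\ref{definition:easy-problems}) give $f_{x_k}(y;\statval) \ge f(x\opt;\statval)$. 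Taking the pointwise maximum of these two minorants and relaxing the constrained minimization over $\mc{X}$ to all of $\R^n$ (which only decreases the value) gives, with $\Delta \defeq f(x_k;\statval) - f(x\opt;\statval) \ge 0$,
\[
\phi(x_{k+1}) - f(x\opt;\statval) \ge \min_{y\in\R^n}\left\{\max\{\Delta + \<g,y-x_k\>,\, 0\} + \frac{1}{2\stepsize}\ltwo{y-x_k}^2\right\}.
\]

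The remaining computation is routine: the minimizer lies along $-g$, so setting $y = x_k - \tau g$ reduces the problem to minimizing $\max\{\Delta - \tau\ltwo{g}^2, 0\} + \frac{\tau^2}{2\stepsize}\ltwo{g}^2$ over $\tau \ge 0$. Splitting into the two regimes according to whether $\stepsize$ is at most or exceeds $\Delta/\ltwo{g}^2$ (i.e.\ whether the affine piece or the constant piece is active at the optimal $\tau$) evaluates this minimum to exactly $\frac{\Delta}{2}\min\{\stepsize,\, \Delta/\ltwo{g}^2\}$, and substituting back yields the claimed inequality. I expect the main obstacle to be the bookkeeping around the subgradient: one must take $g$ to be a subgradient of the \emph{model} $f_{x_k}$ at $x_k$ so that the affine minorant is valid, and then appeal to the containment~\eqref{eqn:subgrad-containment} to identify it with the element $f'(x_k;\statval) \in \partial f(x_k;\statval)$ appearing in the statement. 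The two-regime minimization is then elementary, with the only care being that $\Delta \ge 0$ forces $\Delta - \frac{\stepsize}{2}\ltwo{g}^2 \ge \frac{\Delta}{2}$ whenever $\stepsize\ltwo{g}^2 \le \Delta$.
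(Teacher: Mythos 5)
Your proof is correct and follows essentially the same route as the paper's: both invoke Lemma~\ref{lemma:single-step-progress} at $x = x\opt$, minorize the model by the truncated affine function $\hinge{\Delta + \<g, y - x_k\>}$ (shifted by $f(x\opt;\statval)$) using Conditions~\ref{cond:convex-model}--\ref{cond:lower-by-optimal}, easiness, and the containment~\eqref{eqn:subgrad-containment}, and then minimize the resulting unconstrained prox problem in closed form---the paper's ``guarded stepsize'' $\lambda_k = \min\{\stepsize_k, \Delta/\ltwo{g}^2\}$---with the identical two-regime case analysis. The lone imprecision is your word ``exactly'': in the regime $\stepsize_k \ltwo{g}^2 \le \Delta$ the one-dimensional minimum equals $\Delta - \frac{\stepsize_k}{2}\ltwo{g}^2$, which is only bounded \emph{below} by $\Delta/2$, but your closing sentence supplies precisely this inequality, so nothing is missing.
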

\noindent
In the next two sections, we use this lemma to derive conditions on the
growth of $F$ that we can leverage for fast convergence of \aProx models.
\iftoggle{easyexamples}{%
}{%
  We return to our examples in Section~\ref{sec:easy-examples}, demonstrating
  that common problems satisfy the assumptions in
  Sections~\ref{sec:sharp-growth} and~\ref{sec:strong-growth}.
}

\subsection{Sharp growth with shared minimizers}
\label{sec:sharp-growth}

For our first set of problems, we consider objectives that exhibit
\emph{sharp growth} away from the optimal set $\mc{X}\opt$; classical and
recent optimization literature highlights the importance of such conditions
for the convergence of deterministic optimization
methods~\cite{BurkeFe95,DrusvyatskiyLe18}.
As we shall see, these conditions are sufficient to guarantee
linear convergence of \aProx models
in stochastic settings.
We begin with the following assumption, which we use for
its ease of applicability following the structure
of the progress guarantee in Lemma~\ref{lemma:shared-min-progress}.



\begin{assumption}[Expected sharp growth]
  \label{assumption:expected-sharp-growth}
  There exist constants $\lambda_0, \lambda_1 > 0$ such that
  for all $\stepsize \in \R_+$ and $x \in \mc{X}$
  and $x\opt \in \mc{X}\opt$,
  \begin{equation*}
    \E\left[\min\left\{\stepsize [f(x;\statrv) - f(x\opt; \statrv)],
      \frac{(f(x;\statrv) - f(x\opt; \statrv))^2}{\ltwo{f'(x; \statrv)}^2}
      \right\}\right]
    \ge \dist(x, \mc{X}\opt) \min\left\{\lambda_0 \stepsize,
    \lambda_1 \dist(x, \mc{X}\opt)\right\}.
  \end{equation*}
\end{assumption}
\noindent
While Assumption~\ref{assumption:expected-sharp-growth} is somewhat
complex, a quick calculation shows that
a for it to hold, it is sufficient that
there exist constants $\lambda > 0$ and $p > 0$ such that
\begin{equation*}
  \P\big(f(x;\statrv) - f(x\opt; \statrv) \ge \lambda
  \dist(x, \mc{X}\opt)\big) \ge p
\end{equation*}
and $\E[\ltwo{f'(x;\statrv)}^2] \le \lipobj^2$ for $x \in \mc{X}$. That
is, $f(x; \statrv) \ge f(x\opt; \statrv) + \lambda \dist(x, \mc{X}\opt)$ with
non-zero probability, which is
reasonably easy to check (for example, using the Paley-Zygmund inequality
and Mendelson's small-ball conditions~\cite{Mendelson14}).


Under these conditions, we have the following fast convergence guarantee for
the \aProx family of methods; we provide the proof in
Section~\ref{sec:proof-sharp-growth-convex-convergence}.
\begin{proposition}
  \label{proposition:sharp-growth-convex-convergence}
  Let $F$ be easy to optimize,
  Assumption~\ref{assumption:expected-sharp-growth} hold, and $x_k$ be
  generated by the stochastic iteration~\eqref{eqn:model-iteration} using
  any model satisfying
  Conditions~\ref{cond:convex-model}--\ref{cond:lower-by-optimal}, where the
  stepsizes $\stepsize_k = \stepsize_0 k^{-\beta}$ for
  some $\beta \in (-\infty, 1)$. Define
  $K_0 \defeq
  \floor{(\lambda_0 \stepsize_0 / (\lambda_1 \dist(x_1,
      \mc{X}\opt)))^{1/\beta}}$.
  Then
  \begin{equation*}
    \E[\dist(x_{k+1}, \mc{X}\opt)^2]
    \le 
    \begin{cases}
      \exp\left(-\lambda_1 \min\{k, K_0\}
      - \frac{\lambda_0}{\dist(x_1 \mc{X}\opt)} \sum_{i = K_0 + 1}^k \stepsize_i
      \right) \dist(x_1, \mc{X}\opt)^2 & \mbox{if~} \beta \ge 0 \\
      \exp\left(-\lambda_1 \hinge{k - K_0}
      - \frac{\lambda_0}{\dist(x_1, \mc{X}\opt)}
      \sum_{i = 1}^{k \wedge K_0} \stepsize_i \right)
      \dist(x_1, \mc{X}\opt)^2 & \mbox{if}~ \beta < 0 \end{cases}
  \end{equation*}
  and with probability
  $1$, we have the linear convergence
  \begin{equation*}
    \limsup_{k \to \infty}
    \frac{\dist(x_k, \mc{X}\opt)^2}{(1 - \lambda_1)^k}
    < \infty.
  \end{equation*}
\end{proposition}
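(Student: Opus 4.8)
The plan is to turn the one-step inequality of Lemma~\ref{lemma:shared-min-progress} into a \emph{deterministic} contraction by combining two ingredients: a pathwise monotonicity of the distance to $\mc{X}\opt$, and the expected sharp growth of Assumption~\ref{assumption:expected-sharp-growth}. First I would fix $k$ and take $x\opt = \pi(x_k)$, the projection of $x_k$ onto $\mc{X}\opt$, which is $\mc{F}_{k-1}$-measurable. Since $F$ is easy to optimize, $f(x_k; \statrv_k) - f(x\opt; \statrv_k) \ge 0$ and the $\min$ term is nonnegative, so Lemma~\ref{lemma:shared-min-progress} gives the pathwise inequality $\dist(x_{k+1}, \mc{X}\opt)^2 \le \ltwo{x_{k+1} - x\opt}^2 \le \ltwo{x_k - x\opt}^2 = \dist(x_k, \mc{X}\opt)^2$. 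Writing $D_k \defeq \dist(x_k, \mc{X}\opt)$, the sequence $D_k$ is almost surely non-increasing, so $D_k \le D_1$ for all $k$, where $D_1$ is deterministic as $x_1$ is fixed.

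Next I would take the conditional expectation $\E[\cdot \mid \mc{F}_{k-1}]$ in Lemma~\ref{lemma:shared-min-progress}, using that $x_k, x\opt \in \mc{F}_{k-1}$ while $\statrv_k$ is independent of $\mc{F}_{k-1}$, and apply Assumption~\ref{assumption:expected-sharp-growth} at $x = x_k$, $\stepsize = \stepsize_k$. This yields $\E[D_{k+1}^2 \mid \mc{F}_{k-1}] \le D_k^2 - D_k \min\{\lambda_0 \stepsize_k, \lambda_1 D_k\} = D_k^2 (1 - \min\{\lambda_0 \stepsize_k / D_k, \lambda_1\})$. The contraction factor is random because it depends on $D_k$, and the de-randomizing step is the monotonicity: since $D_k \le D_1$ we have $\lambda_0 \stepsize_k / D_k \ge \lambda_0 \stepsize_k / D_1$, so $1 - \min\{\lambda_0 \stepsize_k / D_k, \lambda_1\} \le 1 - \min\{\lambda_0 \stepsize_k / D_1, \lambda_1\} \le \exp(-\min\{\lambda_1, \lambda_0 \stepsize_k / D_1\})$. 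Taking full expectations and iterating via the tower property gives the deterministic product bound $\E[D_{k+1}^2] \le \exp(-\sum_{i=1}^k \min\{\lambda_1, \lambda_0 \stepsize_i / D_1\}) D_1^2$. The two displayed cases then follow by evaluating the sum: with $\stepsize_i = \stepsize_0 i^{-\beta}$, the term $\min\{\lambda_1, \lambda_0 \stepsize_i / D_1\}$ equals $\lambda_1$ exactly when $i \le K_0$ (for $\beta \ge 0$) or $i \ge K_0$ (for $\beta < 0$), by the definition of $K_0$, and equals $\lambda_0 \stepsize_i / D_1$ otherwise; splitting the sum at $K_0$ reproduces the stated bounds after $1 - \lambda_1 \le e^{-\lambda_1}$.

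Finally, for the almost-sure rate I would show that only finitely many steps lie in the ``additive'' regime $\lambda_1 D_k > \lambda_0 \stepsize_k$, and then apply the Robbins--Siegmund Lemma~\ref{lemma:robbins-siegmund} to $M_k \defeq D_k^2 / (1 - \lambda_1)^k$. Monotonicity of $D_k$ together with the expectation bound (whose exponent diverges because $\sum_i \stepsize_i = \infty$ for $\beta < 1$) forces $\E[D_\infty^2] = 0$, hence $D_k \cas 0$. For $\beta \le 0$ the stepsizes stay bounded away from $0$, so $D_k \le \lambda_0 \stepsize_k / \lambda_1$ for all large $k$; for $\beta \in (0, 1)$ the bound decays like $\exp(-c\, k^{1 - \beta})$, faster than any power of $k$, so $\sum_k \E[D_k^2 / \stepsize_k^2] < \infty$, giving $D_k / \stepsize_k \cas 0$ and again confining all large $k$ to the linear regime. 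Rewriting the one-step bound (after the index shift of Corollary~\ref{corollary:prox-point-probability-bounded}) as $\E[M_{k+1} \mid \mc{F}_{k-1}] \le (1 + B_k) M_k$, where $B_k \ge 0$ vanishes on linear-regime steps and is at most $\lambda_1 / (1 - \lambda_1)$ otherwise, the sum $\sum_k B_k$ has only finitely many nonzero terms and is a.s.\ finite; Lemma~\ref{lemma:robbins-siegmund} then shows $M_k$ converges to a finite limit a.s., which is precisely $\limsup_k D_k^2 / (1 - \lambda_1)^k < \infty$. I expect the crux to be this last regime-entry argument---quantifying, through the stretched-exponential tail bound and summability, that the iterates almost surely reach and remain in the geometric regime---whereas the pathwise monotonicity and the expectation bounds are comparatively mechanical once Lemma~\ref{lemma:shared-min-progress} and Assumption~\ref{assumption:expected-sharp-growth} are combined.
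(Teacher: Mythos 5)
Your proposal is correct and follows essentially the same route as the paper's proof: the same one-step recursion $\E[D_{k+1}^2\mid\mc{F}_{k-1}]\le\max\{1-\lambda_1,\,1-\lambda_0\stepsize_k/D_k\}D_k^2$, de-randomized via the pathwise monotonicity $D_k\le D_1$, with the sum split at $K_0$ for the expectation bounds. For the almost-sure rate the paper likewise first establishes $D_k/\stepsize_k\cas 0$ (via Markov plus Borel--Cantelli, equivalent to your summability of $\E[D_k^2/\stepsize_k^2]$) and then applies Robbins--Siegmund to $D_{k+1}^2/(1-\lambda_1)^{k+1}$, so the arguments coincide.
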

\noindent
Without careful stepsize choices, even
non-stochastic subgradient methods do not achieve such convergence
guarantees. Indeed, consider the simple objective $F(x) = |x|$, which
certainly satisfies the sharp growth conditions, and apply the subgradient
method $x_{k+1} = x_k - \stepsize_k \sign(x_k)$ (where we treat $\sign(0) =
+1$). Then the convergence can be
no faster than $O(\stepsize_k)$; this is the typical jamming behavior of
subgradient methods. In contrast,
Proposition~\ref{proposition:sharp-growth-convex-convergence} shows that
by leveraging the knowledge that $\inf_x F(x) = 0$, we achieve linear
convergence.

\subsection{Quadratic growth with shared minimizers}
\label{sec:strong-growth}

As an alternative to sharp growth conditions, we also consider optimization
problems that exhibit quadratic growth---strong convexity---away from their
optima. As is the case for the sharpness conditions in
Section~\ref{sec:sharp-growth}, strong convexity conditions play an
important role in the analysis and implementation of methods for convex
optimization~\cite{HiriartUrrutyLe93ab, Polyak87, Nesterov04,
  DrusvyatskiyLe18} as well as stochastic optimization
problems~\cite{HazanKa11, GhadimiLa12}. It is thus of interest to develop an
understanding of the behavior of the \aProx family of methods under strong
convexity conditions, so that we make the following assumption (which is
slightly weaker than strong convexity).

\begin{assumption}[Quadratic growth with shared minimizers]
  \label{assumption:shared-strong-convexity}
  There exist constants $\lambda_0, \lambda_1 > 0$
  such that for all $x \in \mc{X}$ and $\stepsize > 0$,
  \begin{equation*}
    \E\left[(f(x; \statrv) - f(x\opt;\statrv))
      \min\left\{\stepsize, \frac{f(x; \statrv) - f(x\opt; \statrv)}{
        \ltwo{f'(x; \statrv)}^2}\right\}\right]
    \ge \min\left\{\lambda_0 \stepsize, \lambda_1 \right\}
    \dist(x, \mc{X}\opt)^2.
  \end{equation*}
\end{assumption}

Let us give more intuitive conditions sufficient for
\ref{assumption:shared-strong-convexity} to hold. Suppose the standard
conditions that $\nabla F$ is $L$-Lipschitz and that $F$ has quadratic
growth: $F(x) - F(x\opt) \ge c_0 \dist(x, \mc{X}\opt)^2$.
In addition, assume there exist constants $0 < c, C < \infty$, $p > 0$
such that
\begin{equation*}
  \P\left(\ltwo{\nabla f(x;\statrv)}^2 \le C \ltwo{\nabla F(x)}^2
  ~ \mbox{and} ~
  f(x; \statrv) - f(x\opt; \statrv) \ge c(F(x) - F(x\opt))
  \right) \ge p > 0.
\end{equation*}
The Lipschitz
condition implies $F(x\opt) \le F(y) \le F(x) + \<\nabla F(x), y - x\> +
\frac{L}{2} \ltwo{y - x}^2$, and setting $y = x - \frac{1}{L} \nabla F(x)$
gives $F(x) - F(x\opt) \ge \frac{1}{2L} \ltwos{\nabla F(x)}^2$ or $\frac{2
  L}{\ltwos{\nabla F(x)}^2} \ge \frac{1}{F(x) - F(x\opt)}$. Thus,
\begin{align*}
  \lefteqn{\E\left[(f(x; \statrv) - f(x\opt;\statrv))
      \min\left\{\stepsize, \frac{f(x; \statrv) - f(x\opt; \statrv)}{
        \ltwo{f'(x; \statrv)}^2}\right\}\right]} \\
  & \qquad ~
  \ge c p \left(F(x) - F(x\opt)\right)
  \min\left\{\stepsize, \frac{F(x) - F(x\opt)}{C \ltwo{\nabla F(x)}^2}\right\}
  \ge p c c_0 \min\left\{\stepsize,
  \frac{1}{2 C L} \right\} \dist(x, \mc{X}\opt)^2 .
\end{align*}

Under the quadratic growth
Assumption~\ref{assumption:shared-strong-convexity},
whenever the problem is easy (Definition~\ref{definition:easy-problems})
Lemma~\ref{lemma:shared-min-progress} implies the following
proposition, which gives nearly linear convergence of the \aProx family
whenever the lower bound condition~\ref{cond:lower-by-optimal} holds.
\begin{proposition}
  \label{proposition:easy-strong-convex-convergence}	
  Let Assumption~\ref{assumption:shared-strong-convexity} hold and $x_k$ be
  generated by the stochastic iteration~\eqref{eqn:model-iteration} by any
  model satisfying
  Conditions~\ref{cond:convex-model}--\ref{cond:lower-by-optimal}, where
  the stepsizes $\stepsize_k= \stepsize_0 k^{-\beta}$
  for some $\beta \in (-\infty, \infty)$. Define
  $K_0 = \floor{(\lambda_0 \stepsize_0 / \lambda_1)^{1/\beta}}$.
  If $\beta \ge 0$, then
  \begin{equation*}
    \E[\dist(x_{k+1}, \mc{X}\opt)^2]
    \le \exp\left(-\lambda_1 \min\{k, K_0\}
    - \lambda_0 \sum_{i = K_0+1}^k \stepsize_i\right) \dist(x_1, \mc{X}\opt)^2,
  \end{equation*}
  while if $\beta < 0$, then
  \begin{equation*}
    \E[\dist(x_{k+1}, \mc{X}\opt)^2]
    \le \exp\left(-\lambda_1 \hinge{k - K_0}
    - \lambda_0 \sum_{i = 1}^{K_0 \wedge k}
    \stepsize_i\right) \dist(x_1, \mc{X}\opt)^2.
  \end{equation*}
\end{proposition}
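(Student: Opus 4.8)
The plan is to combine the one-step progress guarantee of Lemma~\ref{lemma:shared-min-progress} with the quadratic-growth Assumption~\ref{assumption:shared-strong-convexity} to obtain a multiplicative contraction of the squared distance, and then simply unroll the recursion. First I would fix an index $k$ and take $x\opt$ to be the projection of $x_k$ onto $\mc{X}\opt$; since $x_k$ is $\mc{F}_{k-1}$-measurable, so is this choice of $x\opt$, and $\ltwo{x_k - x\opt}^2 = \dist(x_k, \mc{X}\opt)^2$ while $\ltwo{x_{k+1} - x\opt}^2 \ge \dist(x_{k+1}, \mc{X}\opt)^2$. Applying Lemma~\ref{lemma:shared-min-progress} with this $x\opt$, multiplying through by two to clear the factors of $\half$, and taking conditional expectation given $\mc{F}_{k-1}$ (so that the randomness lives only in $\statrv_k$ and $x_k$ is frozen), the progress term becomes exactly the expectation appearing on the left-hand side of Assumption~\ref{assumption:shared-strong-convexity}, evaluated at $x = x_k$ and $\stepsize = \stepsize_k$.

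Then I would invoke Assumption~\ref{assumption:shared-strong-convexity} to lower bound that expectation by $\min\{\lambda_0 \stepsize_k, \lambda_1\}\dist(x_k, \mc{X}\opt)^2$, which yields the contraction
\begin{equation*}
  \E\left[\dist(x_{k+1}, \mc{X}\opt)^2 \mid \mc{F}_{k-1}\right]
  \le \left(1 - \min\{\lambda_0 \stepsize_k, \lambda_1\}\right) \dist(x_k, \mc{X}\opt)^2.
\end{equation*}
Taking total expectations, using $1 - t \le e^{-t}$ valid for every real $t$ (so the step remains correct even if the contraction factor happens to be negative, since $\E[\dist(x_k, \mc{X}\opt)^2] \ge 0$ forces the right-hand side to dominate), and iterating down to $k = 1$ gives
\begin{equation*}
  \E[\dist(x_{k+1}, \mc{X}\opt)^2] \le \exp\left(-\sum_{i=1}^k \min\{\lambda_0 \stepsize_i, \lambda_1\}\right) \dist(x_1, \mc{X}\opt)^2.
\end{equation*}

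The remaining work is to evaluate the exponent for $\stepsize_i = \stepsize_0 i^{-\beta}$, which is exactly where the definition $K_0 = \floor{(\lambda_0 \stepsize_0/\lambda_1)^{1/\beta}}$ enters: the inequality $\lambda_0 \stepsize_0 i^{-\beta} \ge \lambda_1$ is equivalent to $i^\beta \le \lambda_0 \stepsize_0/\lambda_1$. For $\beta \ge 0$ this holds precisely for $i \le K_0$, so the minimum equals $\lambda_1$ up to $K_0$ and $\lambda_0 \stepsize_i$ afterward, giving $\sum_{i=1}^k \min\{\cdot\} = \lambda_1 \min\{k, K_0\} + \lambda_0 \sum_{i=K_0+1}^k \stepsize_i$; for $\beta < 0$ the stepsizes increase and the inequality reverses, so the minimum equals $\lambda_0 \stepsize_i$ for $i \le K_0$ and $\lambda_1$ thereafter, giving $\lambda_0 \sum_{i=1}^{K_0 \wedge k} \stepsize_i + \lambda_1 \hinge{k - K_0}$. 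Substituting these into the displayed exponential bound produces the two stated cases. I expect the only real (and modest) obstacle to be the bookkeeping at the crossover index---checking that the floor in the definition of $K_0$ assigns the borderline term to the correct regime and that empty sums are handled when $k \le K_0$---rather than anything analytically deep, since the genuine content is entirely captured by the one-step contraction above.
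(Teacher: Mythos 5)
Your proposal is correct and follows essentially the same route as the paper: combine Lemma~\ref{lemma:shared-min-progress} with Assumption~\ref{assumption:shared-strong-convexity} to get the one-step contraction $\E[\dist(x_{k+1},\mc{X}\opt)^2 \mid \mc{F}_{k-1}] \le (1 - \min\{\lambda_0\stepsize_k, \lambda_1\})\dist(x_k,\mc{X}\opt)^2$, then unroll using $1 - t \le e^{-t}$ and split the resulting sum at the crossover index $K_0$. The paper's proof is a terser statement of exactly this argument, so no further comparison is needed.
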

\begin{proof}
  Under Assumption~\ref{assumption:shared-strong-convexity}, the distance
  recursion for $D_k \defeq \dist(x_k, \mc{X}\opt)$ in
  Lemma~\ref{lemma:shared-min-progress} then becomes $D_k \le D_{k-1}$ and
  \begin{align*}
    \E[D_{k + 1}^2 \mid \mc{F}_{k-1}]
    & \le \max\{1 - \lambda_0 \stepsize_k,
    1 - \lambda_1 \} D_k^2.
  \end{align*}
  The claim follows by algebraic manipulations and
  that $1 - t \le e^{-t}$ for all $t$.
\end{proof}

Under similar strong convexity assumptions, \citet{SchmidtLe13} and
\citet{MaBaBe18} show that stochastic gradient methods
can achieve linear or near-linear convergence for easy convex optimization
problems. As is typical in the analysis of stochastic gradient methods,
however, this
requires precise stepsize choices that reflect typically
unknown constants, such as global Lipschitz conditions and
the strong convexity parameter $\lambda_0$.  In contrast, the \aProx family
of methods is adaptive to the easiness of the problem---achieving optimal
asymptotic behavior (as in Section~\ref{sec:stability}) while providing
strong finite-sample guarantees and nearly linear convergence
(Proposition~\ref{proposition:easy-strong-convex-convergence}) when problems
satisfy strong growth conditions.

\iftoggle{easyexamples}{%
  
}{%

\subsection{Examples of easy problems with strong growth}
\label{sec:easy-examples}

We now return to the three
examples~\ref{example:overdetermined-linear}--\ref{example:interpolation}
fitting our framework of easy (Definition~\ref{definition:easy-problems})
problems with shared minimizers, exhibiting quantitative growth conditions
for each.

\subsubsection{Overdetermined linear systems and Kaczmarz algorithms}
\label{sec:kaczmarz}

Kaczmarz algorithms~\cite{StrohmerVe09, LeventhalLe10, NeedellWaSr14,
  NeedellTr14} for overdetermined linear systems, as in
Example~\ref{example:overdetermined-linear}, are effective, solving feasible
systems $A x = b$ (where $A \in \R^{m \times n}, m \ge n$) using careful
stochastic gradient steps on the objective $\ltwo{Ax - b}^2$ to achieve fast
convergence.  We consider instead the mean absolute error $F(x) \defeq
\frac{1}{m} \lone{Ax - b}$, which typically satisfies the sharp
growth condition \ref{assumption:expected-sharp-growth}, so that the \aProx
method~\eqref{eqn:model-iteration} using the truncated
model~\eqref{eqn:trunc-model} achieves linear convergence.
Specialized Kaczmarz algorithms typically achieve slightly
better convergence rates~\cite{StrohmerVe09, NeedellWaSr14, NeedellTr14},
but in this case, we have the additional benefit that the \aProx methods are
adaptive: they still converge outside of
linear systems.

We provide conditions sufficient to demonstrate
Assumption~\ref{assumption:expected-sharp-growth}.  Let the vectors $a_i
\in \R^n$ be drawn independently from a distribution with $\ltwo{a_i} \le
\lipobj$, and assume for small $c > 0$ there exists $p_c > 0$ such
that $\P(|\<a_i, v\>| \ge c \ltwo{v}) \ge p_c$ for $v \in \R^n$. Letting
$b_i = \<a_i, x\opt\>$ and $f_i(x) = |\<a_i, x\> - b_i|$, we have the
following lemma;
\iftoggle{SIOPT}{%
  see the long version~\cite{AsiDu18} for a proof.
}{%
  see
  Appendix~\ref{sec:proof-expected-kaczmarz-growth} for its proof.
}
\begin{lemma}
  \label{lemma:expected-kaczmarz-growth}
  Let the preceding conditions on the vectors $a_i$ hold.
  There exists a numerical constant $C < \infty$ such that
  for $c > 0$ and $t \ge 0$, if we define
  \begin{equation*}
    \lambda_0 \defeq c \left(p_c - C \sqrt{\frac{n + t}{m}}\right)
    ~~ \mbox{and} ~~
    \lambda_1 \defeq \frac{c^2}{\lipobj^2}
    \left(p_c - C \sqrt{\frac{n + t}{m}}\right),
  \end{equation*}
  then
  with probability at least $1 - e^{-t}$ over the randomness in the
  $a_i$, simultaneously for all $x$
  \begin{equation*}
    \frac{1}{m} \sum_{i = 1}^m
    \min\left\{\stepsize [f_i(x) - f_i(x\opt)],
    \frac{(f_i(x) - f_i(x\opt))^2}{\ltwo{f'_i(x)}^2}\right\}
    \ge \ltwo{x - x\opt} \min\left\{\lambda_0 \stepsize,
    \lambda_1 \ltwo{x - x\opt}\right\}.
  \end{equation*}
\end{lemma}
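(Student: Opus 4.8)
The plan is to reduce the claim to a uniform small-ball concentration inequality over directions on the sphere, after which the rest is a pointwise algebraic bound. The crucial simplification is that, since $b_i = \<a_i, x\opt\>$, we have $f_i(x\opt) = 0$ and $f_i(x) - f_i(x\opt) = |\<a_i, x - x\opt\>|$, while the subgradient $f_i'(x) = \sign(\<a_i, x - x\opt\>)\, a_i$ satisfies $\ltwo{f_i'(x)} = \ltwo{a_i} \le \lipobj$. Writing $v \defeq x - x\opt$, each summand is exactly $\min\{\stepsize|\<a_i, v\>|,\, \<a_i, v\>^2/\ltwo{a_i}^2\}$, a quantity depending on $v$ only through its direction and magnitude.

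First I would establish the pointwise lower bound. On the event $\{|\<a_i, v\>| \ge c\ltwo{v}\}$, using $\ltwo{a_i} \le \lipobj$ we get $\stepsize|\<a_i, v\>| \ge \stepsize c\ltwo{v}$ and $\<a_i, v\>^2/\ltwo{a_i}^2 \ge c^2\ltwo{v}^2/\lipobj^2$, so each summand is at least $c\ltwo{v}\min\{\stepsize, c\ltwo{v}/\lipobj^2\}$ times the indicator $\mathbf{1}\{|\<a_i, v\>| \ge c\ltwo{v}\}$. Summing and factoring out this deterministic factor reduces the claim to lower bounding the empirical fraction $\frac{1}{m}\sum_{i=1}^m \mathbf{1}\{|\<a_i, v\>| \ge c\ltwo{v}\}$ uniformly over all $v \neq 0$. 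By scale invariance this is a supremum over unit vectors $u \in \sphere^{n-1}$ of an empirical average of indicators whose population mean is $\P(|\<a_i, u\>| \ge c) \ge p_c$ by hypothesis.

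The main obstacle, and the only place randomness enters, is the uniform deviation bound: with probability at least $1 - e^{-t}$,
\[
  \inf_{u \in \sphere^{n-1}} \frac{1}{m}\sum_{i=1}^m \mathbf{1}\{|\<a_i, u\>| \ge c\}
  \ge p_c - C\sqrt{\tfrac{n+t}{m}}.
\]
The sets $\{a : |\<a, u\>| \ge c\}$ are complements of symmetric slabs and hence form a VC class of dimension $O(n)$. Standard uniform-convergence control for VC classes — symmetrization, a Dudley/Rademacher entropy bound giving an expected supremum of order $\sqrt{n/m}$, then a bounded-differences inequality contributing the $\sqrt{t/m}$ term — yields the displayed bound with the two contributions absorbed into a single constant $C$. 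Because the indicator is not Lipschitz, a naive net-plus-Lipschitz argument fails here, so empirical-process/VC control is the natural tool; alternatively one could lower-bound the indicator by a Lipschitz function supported on $\{|\<a, u\>| \ge c\}$ and invoke a Rademacher contraction inequality.

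Finally I would combine the two pieces. On the good event, writing $q \defeq p_c - C\sqrt{(n+t)/m}$,
\[
  \frac{1}{m}\sum_{i=1}^m \min\Big\{\stepsize|\<a_i, v\>|,\, \frac{\<a_i, v\>^2}{\ltwo{a_i}^2}\Big\}
  \ge c\ltwo{v}\, q \min\Big\{\stepsize,\, \frac{c\ltwo{v}}{\lipobj^2}\Big\}
  = \ltwo{v}\min\{\lambda_0 \stepsize,\, \lambda_1 \ltwo{v}\},
\]
with $\lambda_0 = cq$ and $\lambda_1 = c^2 q/\lipobj^2$ exactly as defined in the statement. Recalling $v = x - x\opt$ gives the claim simultaneously for all $x$, since the deviation bound is uniform over directions and every step after it is deterministic.
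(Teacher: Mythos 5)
Your proposal is correct and follows essentially the same route as the paper's proof: reduce to $f_i(x) - f_i(x\opt) = |\<a_i, x - x\opt\>|$, lower bound each summand by $c\ltwo{\Delta}\,\indic{|\<a_i,\Delta\>| \ge c\ltwo{\Delta}}\min\{\stepsize, c\ltwo{\Delta}/\lipobj^2\}$ using $\ltwo{a_i}\le \lipobj$, and then apply a uniform VC-type deviation bound (VC dimension $O(n)$ for halfspace-type classes) to control $\frac{1}{m}\sum_i \indic{|\<a_i,\Delta\>|\ge c\ltwo{\Delta}}$ simultaneously over all directions. The only difference is that you spell out the symmetrization/entropy/bounded-differences machinery that the paper compresses into a citation of a ``standard VC argument.''
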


That is, with high probability over the choice of $A \in \R^{m \times n}$,
Assumption~\ref{assumption:expected-sharp-growth} holds with parameters
$\lambda_0$ and $\lambda_1$.  As a more concrete example, suppose the
vectors $a_i$ are uniform on $\sqrt{n} \cdot \sphere^{n-1}$, the sphere of
radius $\sqrt{n}$. Then $\P(|\<a_i, v\>| \ge \half \ltwo{v}) \ge \half$, so
that $\lambda_1 \gtrsim 1 / n$ with high probability, and
Proposition~\ref{proposition:sharp-growth-convex-convergence} implies that
$\limsup_k \ltwo{x_k - x\opt}^2 / (1 - C / n)^k < \infty$.  Roughly, then,
$O(1) \cdot n \log \frac{1}{\epsilon}$ iterations of the \aProx
update~\eqref{eqn:model-iteration} are sufficient to achieve
$\epsilon$-accuracy in the solution of $Ax = b$, each of which requires time
$O(n)$, yielding a total operation count of $O(1) \cdot n^2 \log
\frac{1}{\epsilon}$. This is a less precise version of the bound
\citet[Sec.~2.1]{StrohmerVe09} attain for Kaczmarz methods
on well-conditioned problems.

\subsubsection{Finding a point in the intersection of convex sets}
\label{sec:random-projections}

We return now to Example~\ref{example:convex-intersection}, building
connections with randomized projection algorithms~\cite{BauschkeBo96,
  LeventhalLe10, LewisLuMa09}.  Let $C_1, C_2, \ldots, C_m$ be closed convex
sets, where $\mc{X}\opt \defeq \cap_{i = 1}^m C_i$ is non-empty. The
conditioning of the problem of finding a point in this intersection is
related to the ratio $\dist(x, \mc{X}\opt) / \max_i \dist(x, C_i)$
(cf.~\cite{LeventhalLe10,LewisLuMa09}).  As a simple special case, if the
sets $C_i$ are all halfspaces of the form $C_i = \{x \mid \<a_i, x\> \le
b_i\}$, then the Hoffman error bound~\cite{Hoffman52} implies that
$\linfs{\hinge{Ax - b}} \ge c \dist(x, \mc{X}\opt)$ for a constant $c > 0$;
this result also holds in infinite dimensions under a constraint
qualification~\cite{HuWa89}.  Abstracting away the particulars of the
sets $C_i$, consider $F(x) = \frac{1}{m} \sum_{i = 1}^m
\dist(x, C_i)$, and assume the analog of the Hoffman bound that for
some $\lambda > 0$,
\begin{equation*}
  \dist(x, \mc{X}\opt) \ge \max_i \dist(x, C_i)
  \ge \lambda \dist(x, \mc{X}\opt),
\end{equation*}
where the first inequality always holds and  $\lambda$
is a
condition number~\cite{LeventhalLe10}. For example, for
halfspaces $C_i =
\{x \mid \<a_i, x\> \le b_i\}$, $i = 1, 2$,
with $\norm{a_i} = 1$,
then $\lambda = \sqrt{(1 + \<a_1, a_2\>) / 2}$
suffices.

\iftoggle{SIOPT}{%
  To understand the growth properties of $F(x) = \frac{1}{m} \sum_{i = 1}^m
  \dist(x, C_i)$, recall that $\dist(\cdot, C_i)$ is $1$-Lipschitz and
  subdifferentiable~\cite[Ex.~VI.3.3]{HiriartUrrutyLe93ab}, so that
  the component functions
  satisfy $\norm{\partial \dist(x, C_i)} \le 1$.  
}{ To understand
  the growth properties of $F(x) = \frac{1}{m} \sum_{i = 1}^m \dist(x,
  C_i)$, recall~\cite[Ex.~VI.3.3]{HiriartUrrutyLe93ab} that $\nabla \dist(x,
  C_i) = (x - \pi_{C_i}(x)) / \ltwo{\pi_{C_i}(x) - x}$ for $x \not \in C_i$,
  where $\pi_{C_i}(x)$ denotes projection onto $C_i$, and $\partial \dist(x,
  C_i)$ consists of those $v$ in the normal cone to $C_i$ at $x$ with
  $\ltwo{v} \le 1$ if $x \in C_i$, so that the subgradients of the component
  functions $\dist(x, C_i)$ have norm bounded by $1$.  }
Then letting $I$ be
uniform in $\{1, \ldots, m\}$, we obtain for any $\stepsize > 0$ that
\begin{align*}
  \E[\dist(x, C_I) \min\{\stepsize, \dist(x, C_I)\}]
  & = \frac{1}{m} \sum_{i = 1}^m
  \dist(x, C_i) \min\{\stepsize, \dist(x, C_i)\} \\
  & \ge \frac{1}{m} \max_i \dist(x, C_i)
  \min\left\{\stepsize, \max_i \dist(x, C_i)\right\} \\
  & \ge \dist(x, \mc{X}\opt) \min\left\{\frac{\stepsize \lambda}{m},
  \frac{\lambda}{m} \dist(x, \mc{X}\opt) \right\}.
\end{align*}
In particular, Assumption~\ref{assumption:expected-sharp-growth}
holds with constant $\lambda_1 = \lambda/m$,
and thus Proposition~\ref{proposition:sharp-growth-convex-convergence}
implies the convergence
$\limsup_k \dist(x_k, \mc{X}\opt)^2 / (1 - \lambda/m)^k < \infty$.
Roughly $\frac{m}{\lambda} \log \frac{1}{\epsilon}$ steps
are sufficient to find a point $x_k$ that is within distance $\epsilon$
of the set $\mc{X}\opt$ using
the \aProx family.

\subsubsection{Interpolation problems}
\label{sec:data-interpolation}

\newcommand{\hnorm}[1]{\norm{#1}_{\mc{H}}}
\newcommand{\kernel}{\mathsf{K}}

Finally, we consider statistical
machine learning problems in which one interpolates the
data, Example~\ref{example:interpolation}. Belkin and
collaborators~\cite{BelkinHsMi18, BelkinRaTs18, MaBaBe18} suggest that in a
number of statistical machine learning problems, it is possible to achieve
zero error on a training sample while still achieving optimal convergence
rates for the population objective.
To give a simple example, we study
underparameterized least-squares.
Our data comes in $m$ pairs $(a_i, b_i) \in \R^n \times \R$,
$f_i(x) = \half (\<a_i, x\> - b_i)^2$,
where $n > m$. The minimum norm
interpolant
$x\opt \defeq \argmin_{x} \{\ltwo{x} \mid
Ax = b\}$,
or equivalently, the minimizer
\begin{equation*}
  x\opt = \argmin_{x \in \R^n}
  \left\{ \sum_{i = 1}^m \ell(\<a_i, x\> - b_i) \mid x
  \in \mbox{span}\{a_1, \ldots, a_m\} \right\}
\end{equation*}
for any loss $\ell : \R \to \R_+$ uniquely minimized at $0$,
possesses certain statistical
optimality properties while exhibiting strong
empirical prediction performance~\cite[Sec.~5]{BelkinHsMi18, BelkinRaTs18,
  MaBaBe18,ZhangBeHaReVi17}.
In our case, the truncated models~\eqref{eqn:trunc-model} guarantee that the
iterates of the \aProx family lie in the span of the vectors $a_i$.
With our choice $f_i$ above, if we let
$\lipobj \defeq \max_i \ltwo{a_i}$
then
\begin{align*}
  \frac{1}{m} \sum_{i = 1}^m f_i(x)
  \min\left\{\stepsize, \frac{f_i(x)}{\ltwo{\nabla f_i(x)}^2}\right\}
  & \ge \frac{1}{2m} \ltwo{A (x - x\opt)}^2 \min\left\{\stepsize,
  \frac{1}{\lipobj^2}\right\}.
\end{align*}
Let $U \Sigma V^T = A$ be the singular value decomposition of $A$,
so
\begin{equation*}
  \ltwo{A(x - x\opt)}^2
  = \sum_{i = 1}^m \sigma_i(A)^2 \<v_i, x - x\opt\>^2
  \ge \sigma_m(A)^2 \ltwo{VV^T (x - x\opt)}^2.
\end{equation*}
As $x\opt \in \mbox{span}\{a_i\}$, whenever $x \in \mbox{span}\{a_i\}$, we
have $\ltwo{VV^T (x - x\opt)} = \ltwo{x - x\opt}$, and
Assumption~\ref{assumption:shared-strong-convexity} holds for
such $x$: if $I$ is uniform on
$\{1, \ldots, m\}$ then
\begin{equation*}
  \E\left[(f_I(x) - f_I(x\opt)) \min \left\{\stepsize, \frac{f_I(x)
      - f_I(x\opt)}{\ltwos{\nabla f_I(x)}^2}\right\}\right]
  \ge \frac{\sigma_m(A)^2}{m} \min\left\{\stepsize, \lipobj^{-2}\right\}
  \ltwo{x - x\opt}^2.
\end{equation*}
Random matrices $A \in \R^{m \times n}$ with
independent rows typically satisfy an inequality
of the form
$\sigma_m(A) \gtrsim \E[\ltwo{a}^2]^{1/2} (1 - \sqrt{m/n})$
with high probability~\cite{Vershynin12}.
Assuming that $\lipobj^2 \lesssim \E[\ltwo{a}^2]$, we see that in this case,
Assumption~\ref{assumption:shared-strong-convexity} then holds
with constants $\lambda_0 \gtrsim \frac{1}{m} \E[\ltwo{a}^2]$
and
$\lambda_1 \gtrsim \frac{1}{m}$.

Summarizing, under typical scenarios,
Proposition~\ref{proposition:easy-strong-convex-convergence} guarantees
the following
\begin{corollary}
  \label{corollary:interpolation}
  Consider the underdetermined least squares problem, where the matrix
  $A$ has rows with constant norm $\ltwo{a_i} = \sqrt{n}$ and $\sigma_m(A)
  \ge \sqrt{m}$, and let $x_k$ be generated by the
  iteration~\eqref{eqn:model-iteration} with the truncated
  model~\eqref{eqn:trunc-model} and stepsizes $\stepsize_k = \stepsize_0
  k^{-\steppow}$ for some $\steppow > 0$. Then
  there exists a constant $C$ depending on $n, m, \stepsize_0$, and $\steppow$
  such that for all $k \in \N$
  \begin{equation*}
    \E[\ltwo{x_k - x\opt}^2] \le C \max\left\{\exp\left(-\frac{k}{m}\right),
    \exp\left(-\frac{\stepsize_0}{1 - \steppow} k^{1 - \steppow}\right)
    \right\} \ltwo{x_1 - x\opt}^2.
  \end{equation*}
\end{corollary}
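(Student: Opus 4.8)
The plan is to obtain Corollary~\ref{corollary:interpolation} as a direct specialization of Proposition~\ref{proposition:easy-strong-convex-convergence}, since the paragraphs preceding the corollary already carry out the substantive work of verifying Assumption~\ref{assumption:shared-strong-convexity} for the underdetermined least-squares objective under the truncated model~\eqref{eqn:trunc-model}. First I would record the explicit growth constants that fall out of that derivation: matching the displayed lower bound $\E[\cdots] \ge \frac{\sigma_m(A)^2}{m}\min\{\stepsize,\lipobj^{-2}\}\ltwo{x-x\opt}^2$ against the form $\min\{\lambda_0\stepsize,\lambda_1\}\dist(x,\mc{X}\opt)^2$ of Assumption~\ref{assumption:shared-strong-convexity} gives $\lambda_0 = \sigma_m(A)^2/m$ and $\lambda_1 = \sigma_m(A)^2/(m\lipobj^2)$. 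Specializing through $\lipobj = \max_i \ltwo{a_i} = \sqrt n$ and $\sigma_m(A)^2 \ge m$ yields $\lambda_0 \ge 1$ together with the corresponding lower bound on $\lambda_1$. Because the interpolant $x\opt$ is a shared minimizer (the system is consistent, so $f_i(x\opt) = 0$ for every $i$) and the truncated model keeps every iterate in $\mathrm{span}\{a_1,\dots,a_m\}$---the subspace on which the restricted quadratic growth was established---the hypotheses of the Proposition hold with stepsize exponent $\steppow > 0$.

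With these constants in hand I would invoke the $\beta \ge 0$ branch of Proposition~\ref{proposition:easy-strong-convex-convergence}, which, for $K_0 = \floor{(\lambda_0\stepsize_0/\lambda_1)^{1/\steppow}}$, supplies the single bound $\E[\dist(x_{k+1},\mc{X}\opt)^2] \le \exp(-\lambda_1\min\{k,K_0\} - \lambda_0\sum_{i=K_0+1}^k \stepsize_i)\,\dist(x_1,\mc{X}\opt)^2$. Here $\lambda_0/\lambda_1 = n$, so $K_0 = \floor{(\stepsize_0 n)^{1/\steppow}}$ is independent of $\sigma_m(A)$. Since the iterates lie in $\mathrm{span}\{a_i\}$ and $x\opt$ is the unique interpolant there, $\dist(x_k,\mc{X}\opt) = \ltwo{x_k - x\opt}$, so all that remains is to recast the right-hand exponential into the claimed $\max$.

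The core of the argument is then a two-regime analysis of the exponent. In the early regime $k \le K_0$ the second sum is empty and the bound is $\exp(-\lambda_1 k)$; evaluating $\lambda_1 = \sigma_m(A)^2/(m\lipobj^2)$ under the stated normalization produces the linearly decaying term, which the statement records as $\exp(-k/m)$. In the late regime $k > K_0$ I would discard the factor $\exp(-\lambda_1 K_0) \le 1$ and bound the partial sum from below by an integral, $\sum_{i=K_0+1}^k i^{-\steppow} \ge \int_{K_0+1}^{k+1} t^{-\steppow}\,dt = \frac{(k+1)^{1-\steppow} - (K_0+1)^{1-\steppow}}{1-\steppow}$; together with $\lambda_0 \ge 1$ this gives $\exp(-\frac{\stepsize_0}{1-\steppow}k^{1-\steppow})$ up to the multiplicative factor $\exp(\frac{\lambda_0 \stepsize_0}{1-\steppow}(K_0+1)^{1-\steppow})$, which depends only on $n, m, \stepsize_0, \steppow$. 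Folding the two regimes into one bound valid for every $k$ (each $k$ is governed by whichever of the two terms is larger) and absorbing every boundary and lower-order factor into a single $C = C(n,m,\stepsize_0,\steppow)$ yields the stated $\max$.

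I expect the main obstacle to be precisely this constant bookkeeping in the late regime: the integral comparison for $\sum_{i=K_0+1}^k \stepsize_i$, the control of the boundary term $(K_0+1)^{1-\steppow}$ (which is polynomial in $n$ through $K_0 = \floor{(\stepsize_0 n)^{1/\steppow}}$) and its absorption into $C$, the careful identification of the early-regime exponent with $1/m$ from the matrix normalization, and the uniform splicing of the two regimes into a single $\max$. I would also handle the cases outside $\steppow \in (0,1)$ separately: at $\steppow = 1$ the closed form $\frac{1}{1-\steppow}$ degenerates and the sum instead grows like $\stepsize_0 \log k$, while for $\steppow > 1$ the stepsizes are summable, the late-regime exponent remains bounded, and the corollary degrades to a bounded-iterate (stability) statement rather than a decaying rate.
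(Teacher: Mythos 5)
Your proposal follows exactly the paper's route: the corollary is stated as a direct consequence of the quadratic-growth verification in Section~\ref{sec:data-interpolation} (which yields $\lambda_0 = \sigma_m(A)^2/m$ and $\lambda_1 = \sigma_m(A)^2/(m\lipobj^2)$ on the span of the $a_i$, where the iterates remain) plugged into the $\beta \ge 0$ branch of Proposition~\ref{proposition:easy-strong-convex-convergence}, and your two-regime bookkeeping of the exponent is precisely the implicit ``algebraic manipulation'' the paper leaves to the reader. The constant identifications, the reduction of $\dist(x_k,\mc{X}\opt)$ to $\ltwo{x_k - x\opt}$ via the span argument, and the absorption of the boundary term at $K_0$ into $C(n,m,\stepsize_0,\steppow)$ are all correct, so the proof is complete and matches the paper's intent.
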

\noindent
Corollary~\ref{corollary:interpolation} shows that the iterates exhibit
nearly linear convergence.  With a
more careful stepsize choice, gradient methods achieve linear
convergence~\cite[Section 4]{SchmidtLe13,MaBaBe18},
but these recommended stepsizes cause non-convergence except on the
narrow set of ``easy'' problems considered. In our scenario, however,
\aProx methods achieve these convergence rates
while  enjoying
convergence in other problems.

}


\section{Non asymptotic convergence results}
\label{sec:everything-is-fine}

For our final set of theoretical results, we provide two propositions on the
non-asymptotic convergence of \aProx methods, describing their behavior on
Lipschitz objectives, and then showing that for any functions exhibiting a
type of strong convexity, the stochastic proximal point method achieves
strong non-asymptotic guarantees.

The first result, on convergence for Lipschitzian objectives,
is essentially known~\cite[Thm.~4.1]{DavisDr19}, and it generalizes
the standard results in most treatments of stochastic convex
optimization, where one makes some type of
Lipschitzian assumptions on the random functions $f$
(e.g.~\cite{Zinkevich03, NemirovskiJuLaSh09, Bertsekas11}):
\begin{assumption}
  \label{assumption:lipschitz}
  There exists $\lipobj^2 < \infty$ such that for each $x \in \mc{X}$,
  $\E[\ltwo{f'(x; \statrv)}^2] \le \lipobj^2$.
\end{assumption}
\noindent
Under this Lipschitzian assumption, the following extension of the
well-known results on convergence of the stochastic gradient
method~\cite{Zinkevich03, NemirovskiJuLaSh09} (also generalizing
\citet{Bertsekas11}) holds.
\begin{proposition}
  \label{proposition:convex-lipschitz}
  Let Assumptions~\ref{assumption:convex} and~\ref{assumption:lipschitz}
  hold, and let the iterates $x_k$ be generated by
  algorithm~\eqref{eqn:model-iteration} by any model satisfing
  Conditions~\ref{cond:convex-model}--\ref{cond:lower-model}. Define
  $\wb{x}_k = (\sum_{i = 1}^k \stepsize_i)^{-1} \sum_{i = 1}^k \stepsize_i
  x_i$. Then
  \begin{equation*}
    \E[F(\wb{x}_k)] - F(x\opt)
    \le \frac{1}{2 \sum_{i = 1}^k \stepsize_i} \ltwo{x_0 - x\opt}^2
    + \frac{\lipobj^2}{2 \sum_{i = 1}^k \stepsize_i} \sum_{i = 1}^k \stepsize_i^2.
  \end{equation*}
  If $\mc{X}$ is compact with
  $R \defeq \sup_{x \in \mc{X}} \ltwo{x - x\opt}$, then
  the average $\wb{x}_k \defeq \frac{1}{k} \sum_{i=1}^k x_i$ satisfies
  \begin{equation*}
    \E[F(\wb{x}_k)] - F(x\opt)
    \le \frac{R^2}{2 k \stepsize_k} + \frac{\lipobj^2}{2 k}
    \sum_{i = 1}^k \stepsize_i.
  \end{equation*}
\end{proposition}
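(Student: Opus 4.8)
The plan is to build everything on the one-step progress guarantee of Lemma~\ref{lemma:single-recentered-progress}, which holds for \emph{any} model satisfying Conditions~\ref{cond:convex-model}--\ref{cond:lower-model} and so in particular covers every \aProx method in the hypothesis. First I would apply that lemma with the comparison point $x = x\opt$ (which exists by Assumption~\ref{assumption:convex}), giving
\begin{equation*}
  \half \ltwo{x_{k+1} - x\opt}^2
  \le \half \ltwo{x_k - x\opt}^2
  - \stepsize_k[f(x_k; \statrv_k) - f(x\opt; \statrv_k)]
  + \frac{\stepsize_k^2}{2}\ltwo{f'(x_k; \statrv_k)}^2.
\end{equation*}
Since $x_k \in \mc{F}_{k-1}$, taking the conditional expectation $\E[\cdot \mid \mc{F}_{k-1}]$ turns $f(x_k; \statrv_k)$ and $f(x\opt; \statrv_k)$ into $F(x_k)$ and $F(x\opt)$, while Assumption~\ref{assumption:lipschitz} bounds $\E[\ltwo{f'(x_k; \statrv_k)}^2 \mid \mc{F}_{k-1}] \le \lipobj^2$. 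After taking full expectations and writing $D_i \defeq \E[\ltwo{x_i - x\opt}^2]$, this yields the basic recursion
\begin{equation*}
  \stepsize_k \E[F(x_k) - F(x\opt)]
  \le \half(D_k - D_{k+1}) + \frac{\stepsize_k^2 \lipobj^2}{2}.
\end{equation*}

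For the first bound I would sum this inequality over $i = 1, \dots, k$. The distance terms telescope to $\half(D_1 - D_{k+1}) \le \half \ltwo{x_1 - x\opt}^2$, and the remaining error collects as $\frac{\lipobj^2}{2}\sum_{i=1}^k \stepsize_i^2$, so that $\sum_{i=1}^k \stepsize_i \E[F(x_i) - F(x\opt)] \le \half \ltwo{x_1 - x\opt}^2 + \frac{\lipobj^2}{2}\sum_{i=1}^k \stepsize_i^2$. The claim then follows from Jensen's inequality: since $F$ is convex and $\wb{x}_k = (\sum_i \stepsize_i)^{-1}\sum_i \stepsize_i x_i$ is a convex combination of the $x_i$, we have $(\sum_{i=1}^k \stepsize_i)(F(\wb{x}_k) - F(x\opt)) \le \sum_{i=1}^k \stepsize_i(F(x_i) - F(x\opt))$, and dividing through by $\sum_{i=1}^k \stepsize_i$ delivers exactly the stated bound.

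The second bound, for the \emph{uniform} average $\wb{x}_k = \frac1k \sum_i x_i$ on compact $\mc{X}$, requires more care because the natural weights no longer telescope cleanly. Here I would instead divide the basic recursion by $\stepsize_i$ before summing, obtaining $\E[F(x_i) - F(x\opt)] \le \frac{1}{2\stepsize_i}(D_i - D_{i+1}) + \frac{\stepsize_i \lipobj^2}{2}$. Summing over $i$, the error term is immediately $\frac{\lipobj^2}{2}\sum_{i=1}^k \stepsize_i$, and the real work is to control $\sum_{i=1}^k \frac{1}{2\stepsize_i}(D_i - D_{i+1})$. I expect this to be the main obstacle. The plan is to use summation by parts (Abel's identity) to rewrite it as $\frac{D_1}{2\stepsize_1} + \sum_{i=2}^k \frac{D_i}{2}(\frac{1}{\stepsize_i} - \frac{1}{\stepsize_{i-1}}) - \frac{D_{k+1}}{2\stepsize_k}$. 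With the stepsizes non-increasing, the coefficients $\frac1{\stepsize_i} - \frac1{\stepsize_{i-1}}$ are nonnegative, so the compactness bound $D_i \le R^2$ lets me replace each $D_i$ by $R^2$; the resulting sum telescopes to $R^2(\frac1{\stepsize_k} - \frac1{\stepsize_1})$, and together with $D_1 \le R^2$ and dropping the nonnegative final term this bounds the whole expression by $\frac{R^2}{2\stepsize_k}$. Combining gives $\sum_{i=1}^k \E[F(x_i) - F(x\opt)] \le \frac{R^2}{2\stepsize_k} + \frac{\lipobj^2}{2}\sum_{i=1}^k \stepsize_i$, and a final application of Jensen's inequality with the uniform average (dividing by $k$) yields the second claim.
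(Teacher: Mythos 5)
Your proposal is correct and follows essentially the same route as the paper's proof: the one-step bound of Lemma~\ref{lemma:single-recentered-progress} at $x = x\opt$, conditional expectation plus Assumption~\ref{assumption:lipschitz}, telescoping with Jensen for the weighted average, and the same Abel summation-by-parts rearrangement with $\E[\ltwo{x_i - x\opt}^2] \le R^2$ for the compact case. The only difference is cosmetic: you make explicit the monotonicity of the stepsizes needed to sign the coefficients $\frac{1}{\stepsize_i} - \frac{1}{\stepsize_{i-1}}$, which the paper leaves implicit.
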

\noindent
\iftoggle{SIOPT}{%
  The proof is a more or less standard application of
  Lemma~\ref{lemma:single-recentered-progress} (See Appendix C
  in~\cite{AsiDu18}); the only contribution beyond~\citet{DavisDr19} is the
  claim under compact $\mc{X}$.
}{%
  The proof is a more or less standard application of
  Lemma~\ref{lemma:single-recentered-progress}, so we defer 
  it to Appendix~\ref{sec:proof-convex-lipschitz}; the
  only contribution beyond~\citet{DavisDr19} is the claim
  under compact $\mc{X}$.
} 

We present one final theoretical result, showing how the stochastic proximal
point method (the full model~\eqref{eqn:prox-model}) achieves reasonable
non-asymptotic convergence guarantees even when the functions $f$ may be
non-Lipschitz and non-smooth, as long as they obey a (restricted) strong
convexity condition. The convergence guarantees we present here
are impossible with standard stochastic gradient methods, which can
diverge under the assumptions we consider.

\begin{assumption}[Restricted strong convexity]
  \label{assumption:strong-convexity}
  The functions $f(\cdot;\statval)$
  are strongly convex over $\mc{X}$ with respect to the matrix
  $\Sigma(\statval) \succeq 0$, that is,
  \begin{equation*}
    f(y; \statval) \ge f(x; \statval)
    + \<f'(x; \statval), y - x\> + \half (x - y)^T \Sigma(\statval)
    (x - y)
    ~~ \mbox{for~} x, y \in \mc{X}
  \end{equation*}
  for all $f'(x; \statval) \in \partial f(x; \statval)$.
  The matrix $\Sigma$ satisfies
  $\E[\Sigma(\statrv)] \succeq \lambdamin I_{n \times n}$,
  where $\lambdamin > 0$.
\end{assumption}

We analyze the stochastic proximal point method
under Assumption~\ref{assumption:strong-convexity};
we begin with a technical lemma, which provides a guarantee
on the one-step progress of the method. 
We present the proof of Lemma~\ref{lemma:prox-point-strong-convexity} in Appendix~\ref{sec:proof-prox-point-strong-convexity}.
\begin{lemma}
  \label{lemma:prox-point-strong-convexity}
  Let Assumption~\ref{assumption:strong-convexity} hold and the iterates
  $x_k$ be generated by the stochastic proximal point
  method~\eqref{eqn:sppm}. Define
  \begin{equation*}
    \wb{\Sigma}_k
    \defeq \E\left[\frac{1}{1 + 2 \stepsize_k \lambdamax(\Sigma(\statrv))}
      \Sigma(\statrv)\right].
  \end{equation*}
  Then
  \begin{equation*}
    \half \E\left[\ltwo{x_{k + 1} - x\opt}^2 \mid \mc{F}_{k-1}\right]
    \le \half (x_k - x\opt)^T
    \left(I - \stepsize_k \wb{\Sigma}_k\right)(x_k - x\opt)
    + \stepsize_k^2 \E\left[\ltwo{f'(x\opt; \statrv)}^2\right].
  \end{equation*}
\end{lemma}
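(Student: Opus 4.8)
The plan is to begin from the first-order optimality condition for the proximal update~\eqref{eqn:sppm}, exactly as in Lemma~\ref{lemma:single-step-progress}: there is $g_{k+1} \in \partial f(x_{k+1}; \statrv_k)$ with $\<\stepsize_k g_{k+1} + (x_{k+1} - x_k),\, y - x_{k+1}\> \ge 0$ for all $y \in \mc{X}$. Taking $y = x\opt$ and expanding the inner product as a difference of squares yields the three-point inequality $\half \ltwo{x_{k+1} - x\opt}^2 \le \half \ltwo{x_k - x\opt}^2 - \stepsize_k \<g_{k+1}, x_{k+1} - x\opt\> - \half \ltwo{x_{k+1} - x_k}^2$. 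Everything after this is a matter of lower bounding $\<g_{k+1}, x_{k+1} - x\opt\>$ through the restricted strong convexity and then turning the resulting penalty into a contraction. Throughout I write $u \defeq x_{k+1} - x\opt$, $w \defeq x_k - x\opt$, $v \defeq f'(x\opt; \statrv_k)$, and $\Sigma \defeq \Sigma(\statrv_k)$.

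For the lower bound I would use Assumption~\ref{assumption:strong-convexity} in its strongly monotone form: adding the two strong-convexity inequalities between $x_{k+1}$ (with subgradient $g_{k+1}$) and $x\opt$ (with subgradient $v$) gives $\<g_{k+1} - v, u\> \ge u^T \Sigma u$. Substituting this and collecting the squared terms collapses the three-point inequality to $u^T (I + \stepsize_k \Sigma) u \le \<u, w - \stepsize_k v\>$, equivalently $\half \ltwo{u}^2 \le \half \ltwo{w}^2 - \stepsize_k \<v, u\> - \stepsize_k u^T \Sigma u - \half \ltwo{u - w}^2$. To expose the gradient-at-optimum noise I would split $\<v, u\> = \<v, w\> + \<v, u - w\>$ and apply Young's inequality to the second piece against half of the penalty, $-\stepsize_k \<v, u - w\> \le \tfrac{1}{4} \ltwo{u - w}^2 + \stepsize_k^2 \ltwo{v}^2$; this is exactly what produces the claimed noise term and leaves the residual $-\stepsize_k u^T \Sigma u - \tfrac{1}{4} \ltwo{u - w}^2$.

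The crux, and the step I expect to be the main obstacle, is converting this residual (expressed through the \emph{new} point $u$) into a contraction of $\ltwo{w}^2 = \ltwo{x_k - x\opt}^2$ carrying the factor $1/(1 + 2\stepsize_k \lambdamax(\Sigma))$. The key is the pointwise (in $\statrv_k$) matrix inequality $\stepsize_k u^T \Sigma u + \tfrac{1}{4} \ltwo{u - w}^2 \ge \frac{\stepsize_k}{2} w^T \frac{\Sigma}{1 + 2\stepsize_k \lambdamax(\Sigma)} w$, valid for \emph{every} $u$. I would prove it by minimizing the convex quadratic left-hand side over $u$: the minimizer is $(I + 4\stepsize_k \Sigma)^{-1} w$ and the minimal value equals $\stepsize_k w^T \Sigma (I + 4\stepsize_k \Sigma)^{-1} w$, so, since $\Sigma$ and its spectral bound commute, the inequality reduces to the scalar fact $\frac{1}{1 + 4\stepsize_k \mu} \ge \frac{1}{2(1 + 2\stepsize_k \lambdamax(\Sigma))}$ for each eigenvalue $\mu \le \lambdamax(\Sigma)$, which is immediate from $4\stepsize_k \mu \le 1 + 4\stepsize_k \lambdamax(\Sigma)$. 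This is precisely where the $1 + 2\stepsize_k \lambdamax$ denominator arises; applied to our specific $u$ it upgrades the residual to $-\frac{\stepsize_k}{2} w^T \frac{\Sigma}{1 + 2\stepsize_k \lambdamax(\Sigma)} w$.

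Finally I would take the conditional expectation given $\mc{F}_{k-1}$. As $w$ is $\mc{F}_{k-1}$-measurable while $\Sigma$ and $v$ depend only on the fresh sample $\statrv_k$, the quadratic term averages to $\half w^T (I - \stepsize_k \wb{\Sigma}_k) w$ and the noise term to $\stepsize_k^2 \E[\ltwos{f'(x\opt; \statrv)}^2]$. The remaining cross term $-\stepsize_k \<v, w\>$ is controlled by choosing the measurable selection with $\E[f'(x\opt; \statrv)] = F'(x\opt) \in \partial F(x\opt)$ and invoking the first-order optimality $\<F'(x\opt), x_k - x\opt\> \ge 0$ (as $x_k \in \mc{X}$), so this term is nonpositive and may be dropped. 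Assembling the three pieces yields the stated bound.
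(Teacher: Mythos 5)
Your proof is correct, and while it follows the same overall skeleton as the paper's (one-step progress from the prox optimality condition, inject strong convexity, transfer the $\Sigma$-quadratic from $x_{k+1}-x\opt$ to $x_k - x\opt$ at a $1/(1+2\stepsize_k\lambdamax(\Sigma))$ discount, extract the noise by Young's inequality, then take expectations and drop the cross term), the two pivotal steps are executed differently. The paper keeps function values around: it strengthens the last step of Lemma~\ref{lemma:single-step-progress} with the one-sided strong-convexity inequality (picking up the penalty $\tfrac{\stepsize_k}{2}(x_{k+1}-x\opt)^T\Sigma(x_{k+1}-x\opt)$ with coefficient $\stepsize_k/2$), transfers it to $x_k - x\opt$ by expanding the quadratic and applying Young's inequality with a parameter $\eta_k = \frac{2\stepsize_k\lambdamax(\Sigma)}{1+2\stepsize_k\lambdamax(\Sigma)}$ tuned so that the residual matrix on $\ltwo{x_{k+1}-x_k}^2$ is $\succeq \tfrac12 I$, and only at the very end converts $f(x\opt;\statrv_k) - f(x_{k+1};\statrv_k)$ into the gradient-at-optimum noise via Lemma~\ref{lemma:conv-bound} with $\beta = 2\stepsize_k$. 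You instead eliminate function values immediately via the strongly monotone form $\<g_{k+1}-v, u\> \ge u^T\Sigma u$ (which doubles the quadratic coefficient to $\stepsize_k$), and replace the tuned-$\eta$ argument by an exact unconstrained minimization of $\stepsize_k u^T\Sigma u + \tfrac14\ltwo{u-w}^2$, whose minimal value $\stepsize_k w^T\Sigma(I+4\stepsize_k\Sigma)^{-1}w$ you then compare eigenvalue-by-eigenvalue to $\tfrac{\stepsize_k}{2}\,w^T\frac{\Sigma}{1+2\stepsize_k\lambdamax(\Sigma)}\,w$. Your route is arguably more transparent about where the $1+2\stepsize_k\lambdamax$ denominator comes from (it is the slack in the eigenvalue comparison after an exact optimization), and it is self-contained; the paper's route reuses Lemma~\ref{lemma:conv-bound} and stays closer to the template of Theorem~\ref{theorem:prox-point-bounded}. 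Both land on identical constants, and your handling of the measurable selection $\E[f'(x\opt;\statrv)] = F'(x\opt)$ with $\<F'(x\opt), x_k - x\opt\> \ge 0$ matches the paper's.
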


Lemma~\ref{lemma:prox-point-strong-convexity} guarantees that the
proximal-point method makes progress whenever the restricted strong
convexity conditions hold, irrespective of smoothness of the objective
functions. First, we have $0 \prec \wb{\Sigma}_0 \preceq \wb{\Sigma}_k$ for
all $k \in \N$, and defining
$\lambda_k \defeq \lambda_{\min}(\wb{\Sigma}_k) > 0$, we have
$\lambda_k \uparrow \lambda_\infty \defeq \lambda_{\min}(\E[\Sigma(\statrv)])$
when $\stepsize_k \downarrow 0$.
Under the conditions of Lemma~\ref{lemma:prox-point-strong-convexity}
and Assumption~\ref{assumption:convex},
we thus have
\begin{equation*}
  \E\left[\ltwo{x_{k + 1} - x\opt}^2 \mid \mc{F}_{k-1} \right]
  \le (1 - \stepsize_k \lambda_k) \ltwo{x_k - x\opt}^2
  + \stepsize_k^2 \sigma^2
  \le (1 - \stepsize_k \lambda_0) \ltwo{x_k - x\opt}^2
  + \stepsize_k^2 \sigma^2.
\end{equation*}
Applying this inequality recursively gives
\begin{equation*}
  \E[\ltwo{x_{k + 1} - x\opt}^2]
  \le \prod_{i = 1}^k (1 - \stepsize_i \lambda_i)
  \ltwo{x_1 - x\opt}^2
  + \sum_{i = 1}^k \stepsize_i^2 \prod_{j = i+1}^k (1 - \stepsize_j \lambda_j)
  \sigma^2,
\end{equation*}
where we note that $\stepsize_j \lambda_j < 1$ for all $j$.  An
inductive argument~\cite{PolyakJu92} 
implies the next proposition.
\begin{proposition}
  \label{proposition:with-strong-convexity}
  Let Assumptions~\ref{assumption:convex}
  and~\ref{assumption:strong-convexity} hold, and let $x_k$ be
  generated by the stochastic proximal point method~\eqref{eqn:sppm}
  with stepsizes $\stepsize_k = \stepsize_0 k^{-\steppow}$ for
  some $\steppow \in (0, 1)$. Then
  for a numerical constant $C < \infty$,
  \begin{equation*}
    \E[\ltwo{x_{k+1} - x\opt}^2]
    \le \exp\left(-\lambda_0 \sum_{i=1}^k \stepsize_i\right)
    \ltwo{x_1 - x\opt}^2
    + C \cdot \frac{\sigma^2}{\lambda_0} \stepsize_k \cdot \log k.
  \end{equation*}
\end{proposition}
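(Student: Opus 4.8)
The plan is to start from the one-step contraction already assembled in the discussion preceding the proposition: combining Lemma~\ref{lemma:prox-point-strong-convexity} with Assumption~\ref{assumption:convex} and the lower bound $\wb{\Sigma}_k \succeq \lambda_0 I$ gives $\E[\ltwo{x_{k+1}-x\opt}^2 \mid \mc{F}_{k-1}] \le (1-\stepsize_k\lambda_0)\ltwo{x_k - x\opt}^2 + \stepsize_k^2 \sigma^2$. Taking total expectations and unrolling this linear recursion (legitimate since each factor $1-\stepsize_j\lambda_0 \in (0,1)$, as $\stepsize_j\lambda_0 \le \stepsize_j\lambda_j < 1$) yields the closed form
\begin{equation*}
  \E[\ltwo{x_{k+1}-x\opt}^2]
  \le \prod_{i=1}^k (1-\stepsize_i\lambda_0)\,\ltwo{x_1 - x\opt}^2
  + \sigma^2 V_k,
  \qquad V_k \defeq \sum_{i=1}^k \stepsize_i^2 \prod_{j=i+1}^k (1-\stepsize_j\lambda_0).
\end{equation*}
The bias term is immediate from $1-t \le e^{-t}$, which gives $\prod_{i=1}^k(1-\stepsize_i\lambda_0) \le \exp(-\lambda_0\sum_{i=1}^k\stepsize_i)$, exactly the first term of the claimed bound. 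Everything then reduces to showing $V_k \le C\stepsize_k/\lambda_0$ (the stated $\log k$ is harmless slack), and this is the step I expect to be the main obstacle.

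To control $V_k$, I would split the sum at $m = \floor{k/2}$. For the early indices $i \le m$, I bound the tail product by $\prod_{j=i+1}^k(1-\stepsize_j\lambda_0) \le \prod_{j=m+1}^k(1-\stepsize_j\lambda_0) \le \exp(-\lambda_0\sum_{j=m+1}^k\stepsize_j)$; since $\stepsize_j = \stepsize_0 j^{-\steppow}$ with $\steppow < 1$, the exponent is at least a constant multiple of $k^{1-\steppow}$, so this factor decays faster than any power of $k$ and dominates the at-most-polynomial growth of $\sum_{i \le m}\stepsize_i^2$. Hence the early block is $o(\stepsize_k)$ and negligible. For the late indices $i > m$, the condition $\steppow < 1$ gives the comparison $\stepsize_i \le \stepsize_{m+1} \le 2^\steppow \stepsize_k \le 2\stepsize_k$, so one stepsize can be factored out: writing $P_i \defeq \prod_{j=i+1}^k(1-\stepsize_j\lambda_0)$ with $P_k=1$, the late block is at most $2\stepsize_k \sum_{i=m+1}^k \stepsize_i P_i$.

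The final ingredient is a telescoping identity. The relation $P_{i-1} = (1-\stepsize_i\lambda_0)P_i$ rearranges to $\stepsize_i\lambda_0 P_i = P_i - P_{i-1}$, so that $\sum_{i=m+1}^k \stepsize_i\lambda_0 P_i = P_k - P_m \le 1$, i.e.\ $\sum_{i>m}\stepsize_i P_i \le 1/\lambda_0$. Thus the late block is at most $2\stepsize_k/\lambda_0$, and combining with the negligible early block gives $V_k \le C\stepsize_k/\lambda_0 \le C\stepsize_k\log k/\lambda_0$ for $k \ge 3$, which completes the bound. This direct summation recovers, with the same constants up to the logarithmic slack, the estimate that \citet{PolyakJu92} obtain by induction; the only points requiring care are the stepsize comparison $\stepsize_{\floor{k/2}} \asymp \stepsize_k$ (which is precisely where $\steppow < 1$ is used) and the verification that the early block is genuinely lower order.
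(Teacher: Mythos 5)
Your proof is correct, and its first half---deriving the one-step contraction $\E[\ltwo{x_{k+1}-x\opt}^2] \le (1-\stepsize_k\lambda_0)\,\E[\ltwo{x_k-x\opt}^2] + \stepsize_k^2\sigma^2$ from Lemma~\ref{lemma:prox-point-strong-convexity} and unrolling it---is exactly what the paper does in the displays preceding the proposition. Where you genuinely diverge is in bounding the variance sum $V_k = \sum_{i=1}^k\stepsize_i^2 P_i$ with $P_i \defeq \prod_{j=i+1}^k(1-\stepsize_j\lambda_0)$: the paper defers this to ``an inductive argument'' in the style of Polyak--Juditsky, whose quantitative form is Lemma~\ref{lemma:recursive-sum-prod} (proved by splitting the sum at $k_0 \asymp k^\steppow\log k$ so that the tail product is at most $1/k$ on the early block), and that choice of split point is precisely what introduces the $\log k$ in the statement. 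Your split at $\floor{k/2}$, combined with the exact telescoping identity $\lambda_0\stepsize_i P_i = P_i - P_{i-1}$ giving $\sum_{i>m}\stepsize_i P_i \le P_k/\lambda_0 = 1/\lambda_0$, and with the comparability $\stepsize_{\floor{k/2}+1} \le 2\stepsize_k$ (valid since $\steppow<1$), yields $V_k \le C\stepsize_k/\lambda_0$ with no logarithm---so you prove a slightly stronger bound and correctly identify the $\log k$ as slack. Two points are worth making explicit: first, that $\stepsize_j\lambda_0 \le \stepsize_j\lambda_j \le 1/2$ (since $\wb{\Sigma}_j \preceq (2\stepsize_j)^{-1}I$), so every factor and hence every $P_i$ lies in $(0,1]$ and both the unrolling and the telescoping bound are legitimate; second, that the constant hidden in your ``negligible'' early block, $\exp(-c\lambda_0\stepsize_0 k^{1-\steppow})\sum_{i\le k/2}\stepsize_i^2$, depends on $\steppow$ and $\stepsize_0$ rather than being purely numerical---but the paper's own constant carries the same dependence, so this is not a defect relative to the claimed result.
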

\noindent
An asymptotic argument~\cite{PolyakJu92} gives convergence $\E[\ltwo{x_k -
    x\opt}^2] \lesssim \frac{\sigma^2}{\lambda_{\infty}} \stepsize_k$ for
large $k$. For stochastic proximal point methods, if the objective is
strongly convex, choosing $\stepsize_k = C / k$ for a large constant $C$
yields asymptotic convergence bounds on $\ltwo{x_k - x\opt}^2$ of the form
$\frac{\sigma^2}{\lambda_\infty} \frac{1}{k}$.
In comparison to convergence results available for stochastic gradient
methods~\cite{Polyak87, NemirovskiJuLaSh09, BachMo11},
Proposition~\ref{proposition:with-strong-convexity} holds whenever the
subgradient $f'(x\opt; \statrv)$ has finite second moment, and we require
this only at the point $x\opt$; moreover, it holds no matter the
stepsize sequence.



\newcommand{\numTests}{T}
\newcommand{\numIterations}{K}

\newcommand{\dimExp}{n} 
\newcommand{\numSamplesExp}{m} 
\newcommand{\MatExp}{A} 
\newcommand{\matExp}{a} 
\newcommand{\vecExp}{b} 
\newcommand{\noiseMagExp}{\sigma} 
\newcommand{\noiseVecExp}{v} 

\newcommand{\matCompMat}{M} 
\newcommand{\matCompVara}{X} 
\newcommand{\matCompVarb}{Y}
\newcommand{\numRows}{m} 
\newcommand{\numCols}{n} 
\newcommand{\dimVar}{r} 

\newcommand{\varLogReg}{x} 
\newcommand{\vecLogReg}{a} 
\newcommand{\labelLogReg}{b} 

\newcommand{\vecHinge}{a} 
\newcommand{\labelHinge}{\ell} 
\newcommand{\numClassesHinge}{K} 
\newcommand{\flipProp}{p}

\section{Experiments}
\label{sec:exp}

The final component of this paper is an empirical evaluation of
the \aProx methods. Our goal
in the experiments is to evaluate the relative merits of different
approximate models in the iteration~\eqref{eqn:model-iteration},
and accordingly, we consider the four approximations below.

\begin{enumerate}[label=(\roman*),leftmargin=*]
  \setlength{\itemsep}{0pt}
\item Stochastic gradient method (SGM): uses the
  linear model~\eqref{eqn:dumb-linear-model}.
\item Proximal: uses the full
  model~\eqref{eqn:prox-model}.
\item Truncated: uses the lower truncated model~\eqref{eqn:trunc-model}.
\item Bundle: uses the bundle (cutting plane) model~\eqref{eqn:bundling}
  with two lines, that
  is, with $i = 1$.
\end{enumerate}

We present several experiments, each comparing different aspects of the
\aProx models.  Throughout, we consider step size sequences of the form
$\stepsize_k = \stepsize_0 k^{-\steppow}$, where $\steppow \in (1/2, 1)$.
We wish to evaluate the robustness and stability of each of the models
\eqref{eqn:dumb-linear-model}--\eqref{eqn:bundling} for different problems,
investigating both well- and poorly-conditioned instances, as well as
problems satisfying the ``easy'' conditions of
Definition~\ref{definition:easy-problems}.  Let us provide some guidance
toward expected results.  Roughly, for problems with globally Lipschitz
gradients (such as linear regression problems with noise), we expect the
methods to have fairly similar performance---stochastic gradient, proximal
point, truncated, and bundle models are all asymptotically normal with
optimal covariance. However, as problems become either (i) easier---closer
to satisfying Definition~\ref{definition:easy-problems}---or (ii) more
poorly conditioned or harder because of large or unbounded Lipschitz
constants, we expect stochastic gradient methods to (i) converge more slowly
or (ii) be substantially more sensitive to stepsize choices.

Within each of our experiments, we run each model-based
iteration~\eqref{eqn:model-iteration} for $\numIterations$ total iterations
across multiple initial stepsizes $\stepsize_0$. For a fixed accuracy
$\epsilon > 0$, we record the number of steps $k$ required to achieve
$F(x_k) - F(x\opt) \le \epsilon$, reporting these times (where we terminate
each run at iteration $\numIterations$).  We perform $\numTests$ experiments
for each initial stepsize choice, reporting the median of the
time-to-$\epsilon$-accuracy; the shaded areas in each
plot cover the 5th to 95th percentile of convergence times
(90\% coverage sets).
Finally, while
we restrict our experiments to the single-sample batch size setting
(i.e.\ no ``mini-batches''), we note that using multiple samples to
decrease the variance of (sub)gradients does not necessarily
improve the robustness of standard stochastic subgradient methods.  Indeed,
Examples~\ref{example:non-quadratic-divergence}
and~\ref{example:quadratic-instability} show that the
deterministic gradient method can be sensitive to stepsize choice. In
experiments we do not include because of the additional space they require,
we verify this intuition---stochastic gradient methods remain similarly
sensitive
to stepsize choice even using large batch sizes. Additionally,
it is possible to consider the iteration $k$ at which the averaged
iterate $\wb{x}_k$ achieves $F(\wb{x}_k) - F(x\opt) \le \epsilon$; this
does not change the qualitative aspects of our plots in any way.


\subsection{Linear Regression}
\label{sec:LinReg_exp}

\begin{figure}[ht]
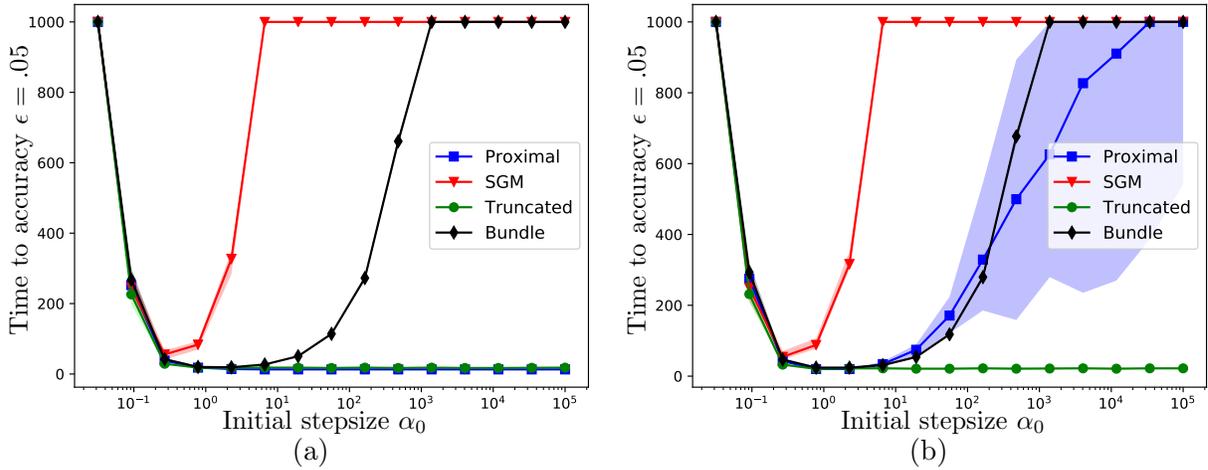

  \begin{center}
    \begin{tabular}{cc}
      \begin{overpic}[width=.5\columnwidth]{
          plots/LinReg2}
        \put(35,-1){{\small Initial stepsize $\stepsize_0$}}
        \put(-3,15){
          \rotatebox{90}{{\small Time to accuracy $\epsilon = .05$}}}
        \put(33,72){
          \tikz{\path[draw=white,fill=white] (0, 0) rectangle (3cm,.35cm);}
        }
      \end{overpic} &
      \begin{overpic}[width=.5\columnwidth]{
          plots/LinReg_noisy2}
        \put(35,-1){{\small Initial stepsize $\stepsize_0$}}
        \put(-3,15){
          \rotatebox{90}{{\small Time to accuracy $\epsilon = .05$}}}
        \put(33,72){
          \tikz{\path[draw=white,fill=white] (0, 0) rectangle (3cm,.35cm);}
        }
      \end{overpic} \\
      (a) & (b)
    \end{tabular}
    \caption{\label{fig:linear-regression-well-conditioned}
      The number of iterations to achieve
      $\epsilon$-accuracy versus initial stepsize $\stepsize_0$
      for linear regression with $m = 1000$, $n = 40$, and
      condition number $\kappa(A) = 1$.
      (a) The noiseless setting with $\sigma = 0$.
      (b) Noisy setting with $\sigma = \half$.
    }
  \end{center}
\end{figure}

\begin{figure}[ht]
  \begin{center}
    \begin{tabular}{cc}
      \begin{overpic}[width=.5\columnwidth]{
      		plots/large_condition/LinReg2}
      	\put(35,-1){{\small Initial stepsize $\stepsize_0$}}
      	\put(-3,15){
      		\rotatebox{90}{{\small Time to accuracy $\epsilon = .05$}}}
      	\put(33,72){
      		\tikz{\path[draw=white,fill=white] (0, 0) rectangle (3cm,.35cm);}
      	}
      \end{overpic} &
      \begin{overpic}[width=.5\columnwidth]{
      		plots/large_condition/LinReg_noisy2}
      	\put(35,-1){{\small Initial stepsize $\stepsize_0$}}
      	\put(-3,15){
      		\rotatebox{90}{{\small Time to accuracy $\epsilon = .05$}}}
      	\put(33,72){
      		\tikz{\path[draw=white,fill=white] (0, 0) rectangle (3cm,.35cm);}
      	}
      \end{overpic} \\
  	  (a) & (b)
    \end{tabular}
  \caption{\label{fig:linear-regression-medium-conditioned}
      The number of iterations to achieve
      $\epsilon$-accuracy versus initial stepsize $\stepsize_0$
      for linear regression with $m = 1000$, $n = 40$, and
      condition number $\kappa(A) = 15$.
      (a) The noiseless setting with $\sigma = 0$.
      (b) Noisy setting with $\sigma = \half$.
  }
  \end{center}
\end{figure}

In our linear regression experiments, we let $A \in \R^{m \times n}$, $b \in
\R^m$, and $F(x) = \frac{1}{2m} \ltwo{Ax - b}^2$, where in each individual
experiment we generate $x\opt \sim \normal(0, I_n) \in \R^n$ and set $b =
Ax\opt + \sigma v$ for $v \sim \normal(0, I_m)$. We choose $\sigma$
differently depending on the experiment, setting $\sigma = 0$ in noiseless
experiments and $\sigma = \half$ otherwise. We generate $A$ as $A = QD$,
where $Q \in \R^{m \times n}$ has uniformly random orthogonal columns, and
$D = \diag(1, 1 + (\kappa-1) / (n - 1), \ldots, \kappa)$ is a diagonal
matrix with linearly spaced entries between $1$ and a desired condition
number $\kappa \ge 1$.

In Figure~\ref{fig:linear-regression-well-conditioned}, we plot the results
of our experiments on well-conditioned problems, which use matrices $A$ with
condition number $\kappa(A) = 1$, while
Figure~\ref{fig:linear-regression-medium-conditioned} shows identical
results except that we use condition number $\kappa(A) = 15$.  Plot (a) of
each figure demonstrates the results for the noiseless setting with $\sigma
= 0$. In Fig.~\ref{fig:linear-regression-well-conditioned}(a), we see the
expected result that the stochastic gradient method has good performance for
a precise range of stepsizes in $[10^{-1}, 1]$, while the better
approximations of the proximal point~\eqref{eqn:prox-model} and the
truncated~\eqref{eqn:trunc-model} models yield better
convergence over a range of stepsizes with six orders of magnitude. The
bundle model~\eqref{eqn:bundling} shows somewhat more robustness than
SGM, but for large stepsizes also exhibits some oscillation.  In the noisy
cases, plots (b) in each figure, the results are similar, except that the
full proximal model is somewhat less robust to stepsize choice; roughly, in
the stochastic proximal point (SPPM) case, the model trusts the
instantaneous function \emph{too} much, and overfits to the noise at each
iteration.

Figure~\ref{fig:linear-regression-medium-conditioned} tells a similar story
to Figure~\ref{fig:linear-regression-well-conditioned}, except that the
stochastic gradient method is essentially not convergent: in no experiment
did it ever achieve accuracy below $\epsilon = .05$ in the noisy or
noiseless settings. This problem is not in any real sense challenging: a
condition number of $\kappa(A) = 15$ is not particularly poorly
conditioned~\cite{TrefethenBa97}, yet stochastic gradient methods exhibit
very poor behavior. These plots suggest that the reliance on stochastic
gradient methods in much of the statistical and machine learning literature
may be misplaced.

\iftoggle{SIOPT}{%

}{%
\subsection{Absolute Loss Regression}
\label{sec:AbsLoss}

\begin{figure}[ht]
  \begin{center}
    \begin{tabular}{cc}
      \includegraphics[width=.5\columnwidth]{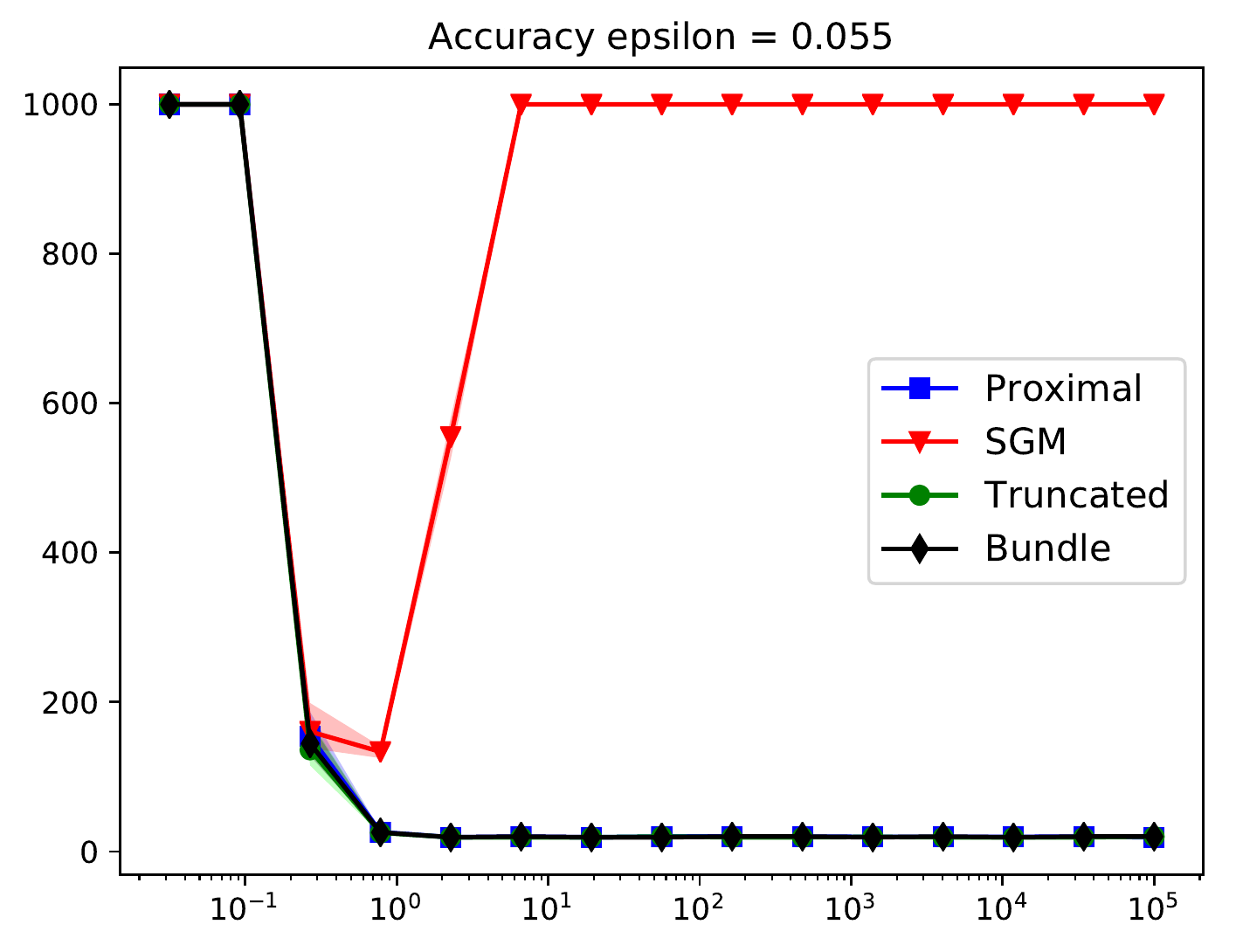}
      &
      \includegraphics[width=.5\columnwidth]{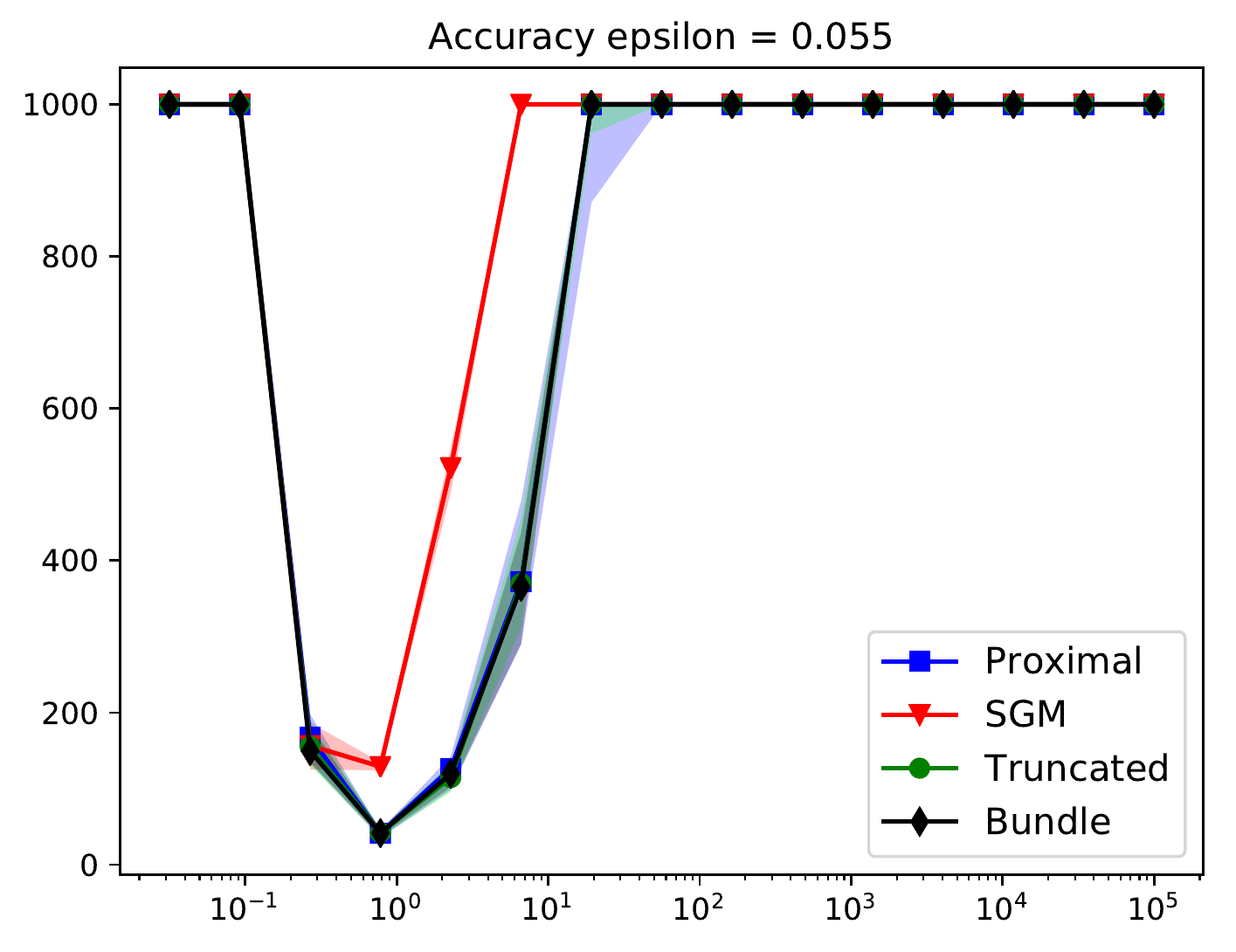}
      \\
      (a) & (b)
    \end{tabular}
    \caption{\label{fig:absolute-regression} The number of iterations to
      achieve $\epsilon$-accuracy as a function of the initial step size
      $\stepsize_0$ for absolute loss regression with $m = 1000$,
      $n = 40$, and condition number $\kappa(A) = 1$. (a) Noiseless setting
      with $\sigma = 0$. (b) Noisy setting with $\sigma = \half$.}
  \end{center}
\end{figure}

For our second set of experiments, we consider regression with an absolute
loss, using $F(x) = \frac{1}{m} \lone{Ax - b}$ and $f(x; (a, b)) = |\<a, x\>
- b|$, which is non-smooth but Lipschitz.  We generate $A$ and $b$
identically to the linear regression experiments in
Sec.~\ref{sec:LinReg_exp}.  Based on our results in
Section~\ref{sec:kaczmarz}, we expect that in the noiseless case, the
proximal-point~\eqref{eqn:prox-model}, truncated~\eqref{eqn:trunc-model},
and multi-line~\eqref{eqn:bundling} methods should have very fast
convergence, which is indeed what we see in
Figure~\ref{fig:absolute-regression}(a). (For this objective,
the multi-line model
functions identically to the stochastic
proximal point method.) When the problems are easy, even when they are
non-smooth, a more careful (even simple truncated) model yields
substantial improvements over naive linear
models~\eqref{eqn:dumb-linear-model}.  In
Figure~\ref{fig:absolute-regression}(b), we display results with noise with
standard deviation $\sigma = \half$; in this case, the better approximations
to $f$ achieve convergence for a slightly larger range of
stepsizes---statistically significant as their confidence regions
exhibit---but the difference is less substantial.
Experiments with
condition numbers $\kappa(A) = 15$ yielded
results completely parallel
to those in Figure~\ref{fig:linear-regression-medium-conditioned}.


}

\subsection{Logistic Regression}
\label{sec:LogReg}

\begin{figure}[ht]
  \begin{center}
    \begin{tabular}{cc}
    \begin{overpic}[width=.5\columnwidth]{
    		plots/LogReg2}
    	\put(35,-1){{\small Initial stepsize $\stepsize_0$}}
    	\put(-3,15){
    		\rotatebox{90}{{\small Time to accuracy $\epsilon = .05$}}}
    	\put(33,72){
    		\tikz{\path[draw=white,fill=white] (0, 0) rectangle (3cm,.35cm);}
    	}
    \end{overpic} &
    \begin{overpic}[width=.5\columnwidth]{
    		plots/LogReg_noisy2}
    	\put(35,-1){{\small Initial stepsize $\stepsize_0$}}
    	\put(-3,15){
    		\rotatebox{90}{{\small Time to accuracy $\epsilon = .05$}}}
    	\put(33,72){
    		\tikz{\path[draw=white,fill=white] (0, 0) rectangle (3cm,.35cm);}
    	}
    \end{overpic} \\
 	(a) & (b)
    \end{tabular}
    \caption{\label{fig:logreg} The number of iterations to achieve
      $\epsilon$-accuracy as a function of the initial step size
      $\stepsize_0$ for logistic regression with $\dimExp = 40$,
      $\numSamplesExp = 1000$, and condition number $\kappa(A) = 1$.
      (a) Noiseless experiment.
      (b) Labels flipped with probability $\flipProp = 0.01$.}
  \end{center}
\end{figure}
\begin{figure}
  \begin{center}
    \begin{tabular}{cc}
        \begin{overpic}[width=.5\columnwidth]{
      		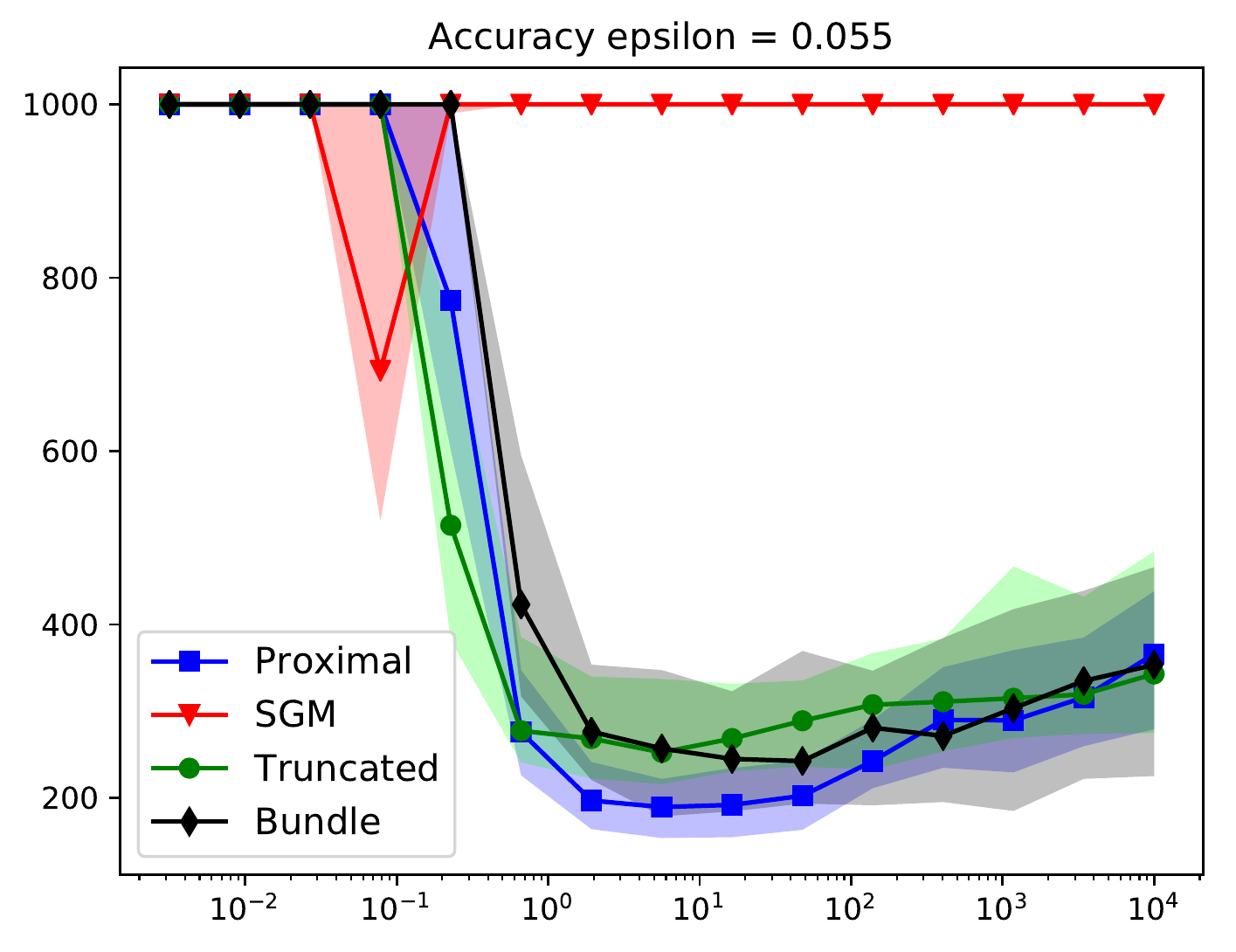}
      	\put(35,-1){{\small Initial stepsize $\stepsize_0$}}
      	\put(-3,15){
      		\rotatebox{90}{{\small Time to accuracy $\epsilon = .05$}}}
      	\put(33,72){
      		\tikz{\path[draw=white,fill=white] (0, 0) rectangle (3cm,.35cm);}
      	}
      \end{overpic} &
      \begin{overpic}[width=.5\columnwidth]{
      		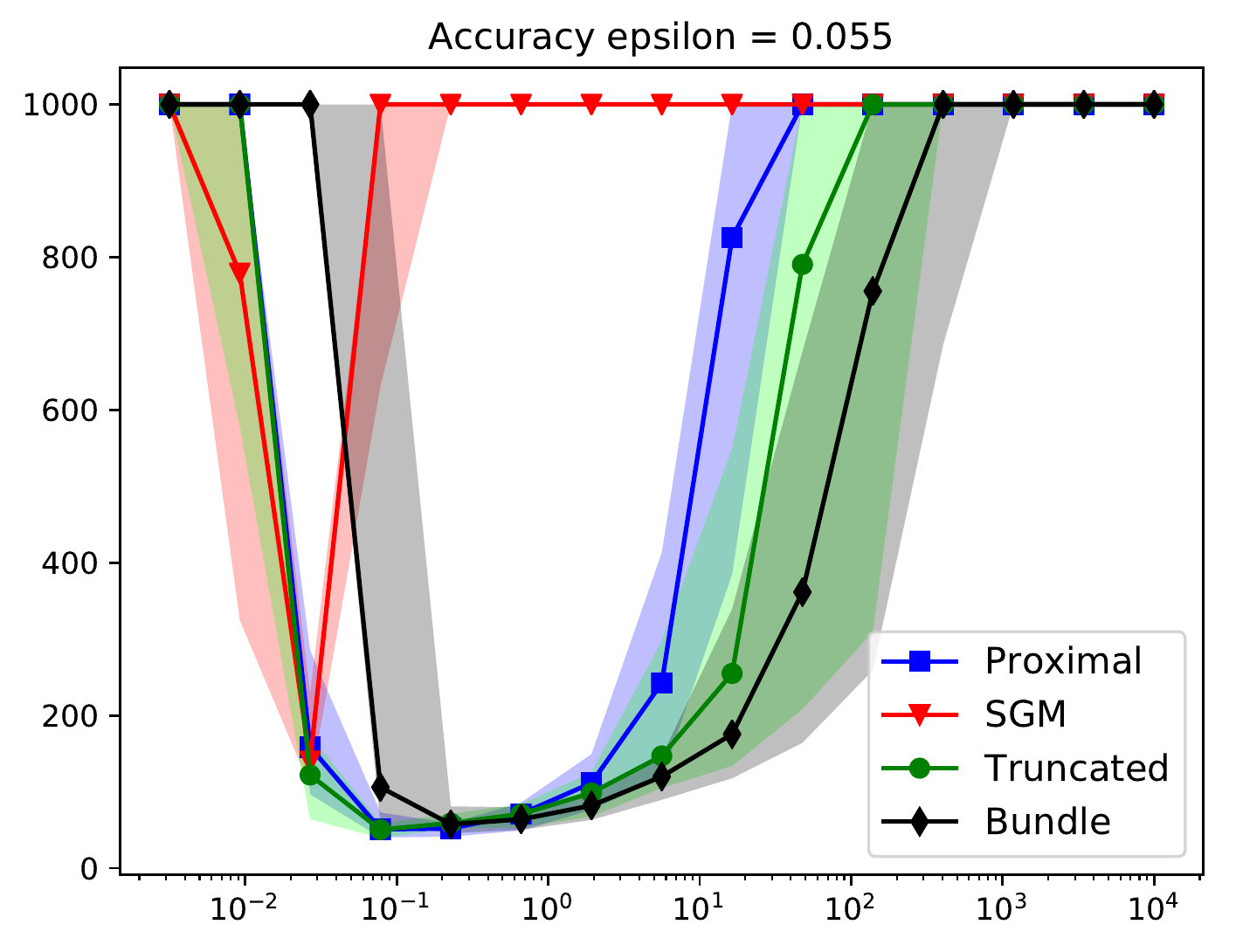}
      	\put(35,-1){{\small Initial stepsize $\stepsize_0$}}
      	\put(-3,15){
      		\rotatebox{90}{{\small Time to accuracy $\epsilon = .05$}}}
      	\put(33,72){
      		\tikz{\path[draw=white,fill=white] (0, 0) rectangle (3cm,.35cm);}
      	}
      \end{overpic} \\
      (a) & (b)
    \end{tabular}
    \caption{
      \label{fig:logreg-condition}
      The number of iterations to achieve $\epsilon$-accuracy as a function
      of the initial step size $\stepsize_0$ for logistic regression with
      parameters $\dimExp = 40$, $\numSamplesExp = 1000$, and
      condition number $\kappa(A) = 15$.
      (a) Noiseless experiment. (b) Labels flipped with probability
      $\flipProp = .01$.}
  \end{center}
\end{figure}

We now turn to classification experiments, beginning with a logistic
regression experiment. In logistic regression, widely used for
fitting models for binary classification in statistics and
machine learning~\cite{HastieTiFr09},
we have
data pairs $(\vecLogReg_i, \labelLogReg_i) \in \R^n \times \{\pm 1\}$,
and we wish to minimize
\begin{equation*}
  F(\varLogReg) \defeq \frac{1}{m} \sum_{i = 1}^m f(\varLogReg;
  (\vecLogReg_i, \labelLogReg_i))
  ~~~ \mbox{where} ~~~
  f(\varLogReg; (\vecLogReg, \labelLogReg))
  = \log\left(1 + e^{-\labelLogReg \<\vecLogReg, \varLogReg\>}\right).
\end{equation*}
We generate the data as follows: we sample $\vecLogReg_i \simiid \normal(0,
I_n)$ and $u\opt \sim \normal(0, I_n)$, labeling
$\labelLogReg_i = \sign(\<\vecLogReg_i, u\opt\>)$; in the noisy setting
we flip each label's sign independently with probability $\flipProp$.

We present the results of this experiment in Figures~\ref{fig:logreg} and
\ref{fig:logreg-condition}, including plots for both the noiseless
(perfectly separated) and noisy cases (plots (a) and (b) in each figure,
respectively), where Fig.~\ref{fig:logreg-condition} displays results when
the condition number of the data matrix $A$ is $\kappa(A) = 15$.  These
plots demonstrate similar results to those in the preceding sections, though
there are a few differences.  First, in the noiseless setting, there is no
optimizer $\varLogReg\opt$, as the optimal value is $\lim_{t \to \infty} F(t
u\opt) = 0$, yet still we see the benefits of the more accurate models in
Figures~\ref{fig:logreg}(a) and \ref{fig:logreg-condition}(a).
Moreover, the truncated and proximal models exhibit a wider range
of convergent stepsizes than the simple stochastic gradient
method does even in the noisy case.

\subsection{Multi Class Hinge Loss}
\label{sec:HingeLoss}

\begin{figure}[ht]
  \begin{center}
    \begin{tabular}{cc}
       \begin{overpic}[width=.5\columnwidth]{
       		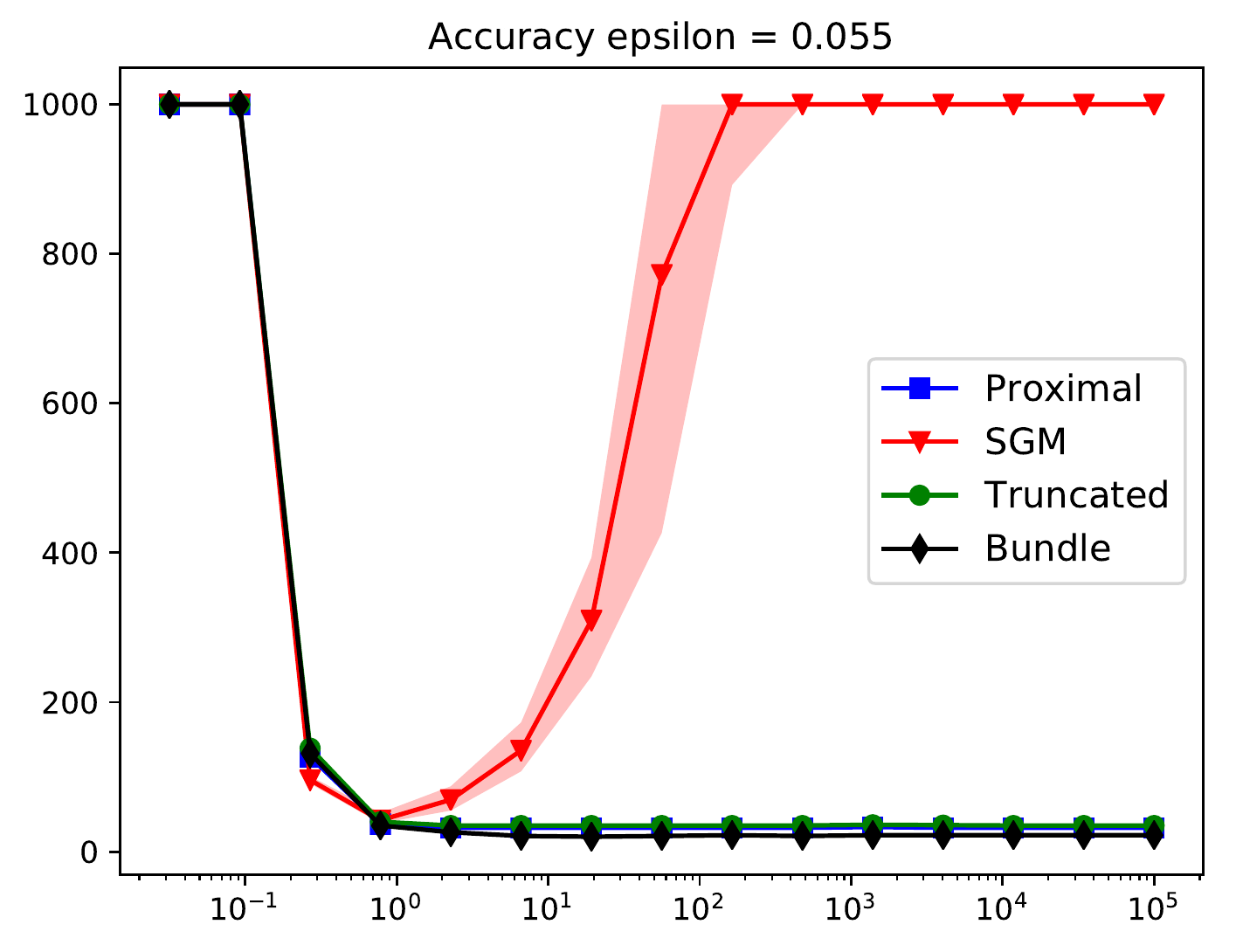}
       	\put(35,-1){{\small Initial stepsize $\stepsize_0$}}
       	\put(-3,15){
       		\rotatebox{90}{{\small Time to accuracy $\epsilon = .05$}}}
       	\put(33,72){
       		\tikz{\path[draw=white,fill=white] (0, 0) rectangle (3cm,.35cm);}
       	}
       \end{overpic} &
       \begin{overpic}[width=.5\columnwidth]{
       		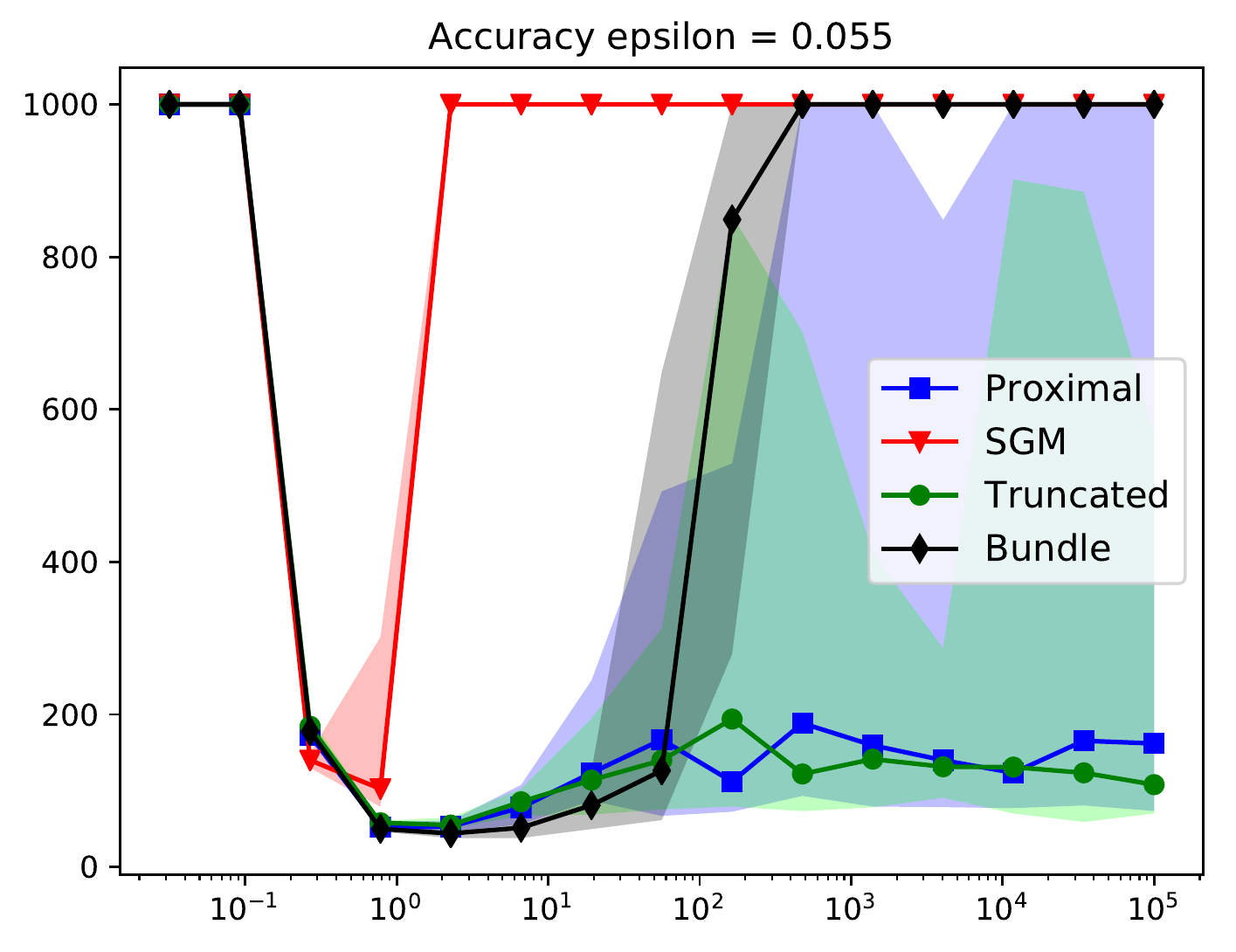}
       	\put(35,-1){{\small Initial stepsize $\stepsize_0$}}
       	\put(-3,15){
       		\rotatebox{90}{{\small Time to accuracy $\epsilon = .05$}}}
       	\put(33,72){
       		\tikz{\path[draw=white,fill=white] (0, 0) rectangle (3cm,.35cm);}
       	}
       \end{overpic} \\
      (a) & (b)
    \end{tabular}
    \caption{\label{fig:HingeLoss} The number of iterations to achieve
      $\epsilon$-accuracy as a function of the initial step size
      $\stepsize_0$ for multi class hinge loss with parameters $\dimExp =
      15$, $\numSamplesExp = 2000$, $\numClassesHinge = 10$ and (a) label
      randomization probability $\flipProp = 0$ and (b) label randomization
      probability $\flipProp = 0.01$.}
  \end{center}
\end{figure}

In our second classification experiment, we focus on a somewhat more complex
multi-class setting, using the multi-class hinge
loss~\cite{CrammerSi01a}. In this setting, we receive $\numSamplesExp$
vectors $\vecHinge_i \in \R^{\dimExp}$ and a correct label $\labelHinge_i
\in [\numClassesHinge]$ for each $i$, where $\numClassesHinge$ is the number
of classes. We wish to find a classifier, represented as a collection of $K$
vectors $X = [x_1 ~ \cdots ~ x_\numClassesHinge] \in \R^{\dimExp \times
  \numClassesHinge}$ that minimizes
\begin{equation*}
  F(X) = \frac{1}{\numSamplesExp}
  \sum_{i=1}^{\numSamplesExp} \max_{j \neq \labelHinge_i}
  \hinge{1 + \<\vecHinge_i, x_j - x_{\labelHinge_i}\>}.
\end{equation*}
In the case that the data is separable with a positive margin, meaning that
there exists $X\opt$ such that $\<\vecHinge_i, x_{\labelHinge_i}\> \ge 1 +
\<\vecHinge_i, x_j\>$ for all $j \neq \labelHinge_i$, this problem is
equivalent to finding a point in the intersection of halfspaces
(recall Section~\ref{sec:random-projections}). Accordingly, we expect to see
fast convergence for the truncated~\eqref{eqn:trunc-model} and
proximal-point~\eqref{eqn:prox-model} models for large stepsizes.

To generate the data, we draw vectors $\vecHinge_i \simiid \normal(0,
I_{\dimExp})$, then generate an ``optimal'' classifier $U\opt \in
\R^{\dimExp \times K}$ with i.i.d. $\normal(0, 1)$ entries.  In the
non-noisy setting, we set $\labelHinge_i = \argmax_j \<u\opt_j,
\vecHinge_i\>$, while in the noisy setting, for each $i \in \{1, \ldots,
\numSamplesExp\}$ we resample a value $\labelHinge_i$ uniformly at random
with probability $\flipProp$.  We present the results of this experiment in
Figure~\ref{fig:HingeLoss}. The experiment reinforces the conclusions of the
previous experiments: better models~\eqref{eqn:model-iteration} in the
\aProx family are significantly more robust to the step size values and
achieve generally faster convergence than more naive subgradient methods.

\subsection{Poisson Regression}
\label{sec:Poisson}
\begin{figure}[ht]
  \begin{center}
        \begin{overpic}[width=.5\columnwidth]{
    		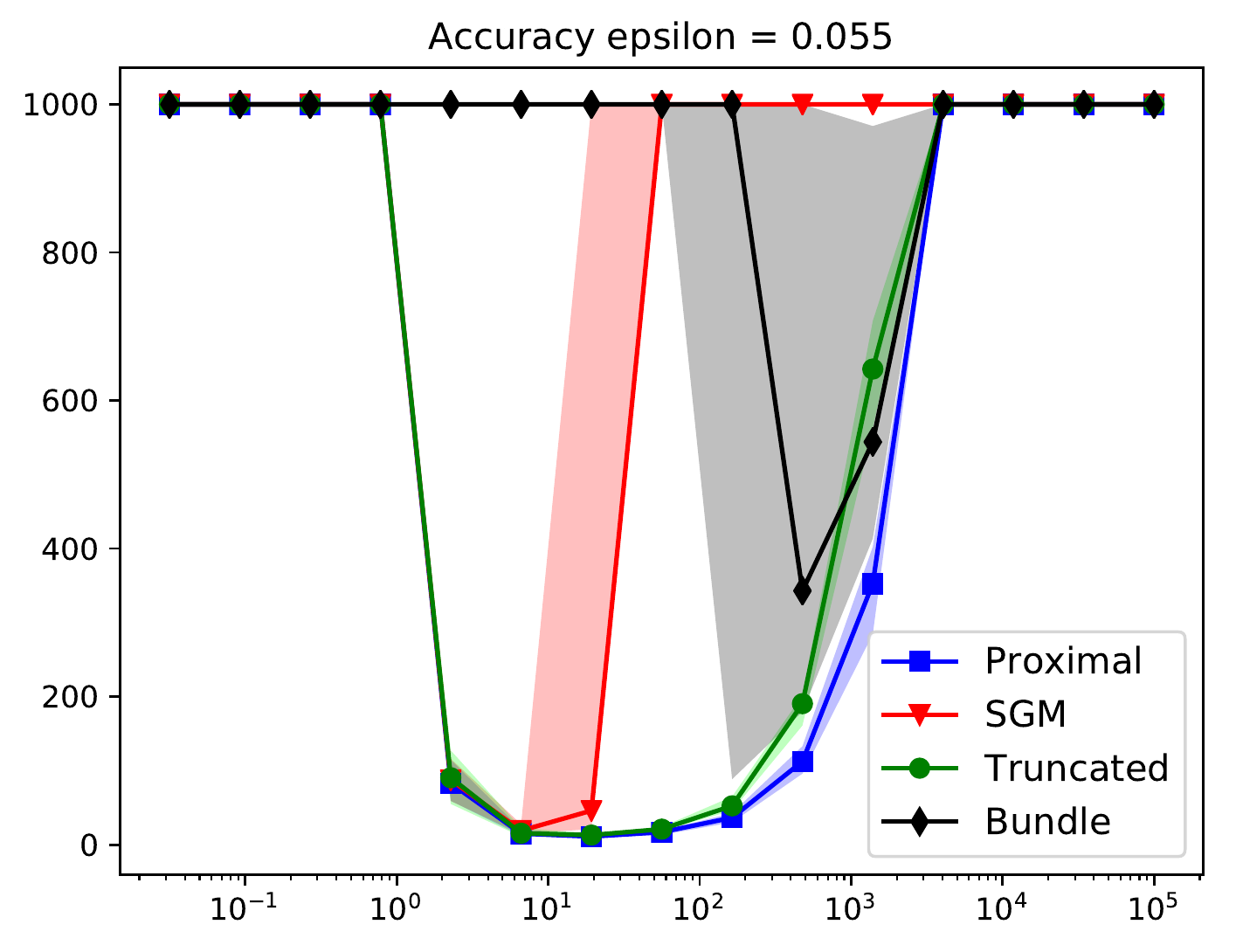}
    	\put(35,-1){{\small Initial stepsize $\stepsize_0$}}
    	\put(-3,15){
    		\rotatebox{90}{{\small Time to accuracy $\epsilon = .05$}}}
    	\put(33,72){
    		\tikz{\path[draw=white,fill=white] (0, 0) rectangle (3cm,.35cm);}
    	}
    	\end{overpic}
    \caption{\label{fig:Poisson} The number of iterations to achieve
      $\epsilon$-accuracy as a function of the initial step size
      $\stepsize_0$ for Poisson regression with parameters $\dimExp = 40$,
      and $\numSamplesExp = 1000$.}
  \end{center}	
\end{figure}

For our final experiment, we consider a poisson regression problem
(Example~\ref{example:poisson-regression}), for which classical results on
stochastic approximation do not apply because of the exponential objective.
In this case, we model counts $b_i \in \N$ as coming from a distribution
$p(b \mid a, x) = \exp(-e^{\<a, x\>}) \exp(b\<a, x\>) / b!$, giving loss
$f(x; (a, b)) = \exp(\<a, x\>) - b \<a, x\>$. We generate the data by first
drawing $u \sim \sqrt{n} \cdot \uniform(\sphere^{n-1})$, then drawing $a_i
\simiid \normal(0, (1/n) I_n)$ and $b_i \sim \poissondist(e^{\<a_i, u\>})$.
In this experiment, the proximal update has no closed-form, as it involves
minimizing a quadratic plus exponential term, but the minimizer of the
proximal update lies on $\{x_k + ta \mid t \in \R\}$, yielding a
1-dimensional convex optimization problem.  As
Example~\ref{example:poisson-regression} shows, however, it is simple to
implement the truncated model~\eqref{eqn:trunc-model} by computing $\inf_x
f(x; (a, b)) = \log(b!) + b - b \log b$.

We present the results in Figure~\ref{fig:Poisson}. It is
surprising to us that the stochastic gradient method converges at all on
this problem, but with low noise scenarios and small enough stepsizes, it
seems that SGM does not leave a region around zero and so is convergent.
Importantly, the truncated
model~\eqref{eqn:trunc-model}, in spite of its substantially easier
calculation, enjoys convergence nearly as robust as that of the stochastic
proximal point method.

\subsection*{Acknowledgments}

We thank two anonymous reviewers for careful reading and feedback on
earlier versions of this paper. In particular, both suggested
strengthenings of Proposition~\ref{proposition:convergence-from-boundedness}
we have implemented.

\appendix


\section{Proof of Theorem~\ref{theorem:always-asymptotic-normality}}
\label{sec:proof-asymptotic-normality}

The proof of the theorem proceeds in a series of lemmas, each of which
requires some work. Roughly, the outline is as
follows: first, we develop a recursion for the iterates $x_{k+1}$, which
parallels a noisy gradient recursion, except that the errors
implicitly depend on the next iterate.  This allows a decomposition (see
Eq.~\eqref{eqn:apply-polyak-juditsky-equality}) of $\frac{1}{k} \sum_{i =
  1}^k (x_i - x\opt)$ into a leading term, which is obviously asymptotically
normal, and several error terms.  Two of these errors are standard
stochastic approximation errors (similar, for example, to those in
\citet{PolyakJu92}), though we require care in showing they are negligible
(see
Lemmas~\ref{lemma:convergence-from-boundedness},
\ref{lemma:an-remainders-zero},
and
\ref{lemma:always-zeta-zero-errors}).  The last error term involves
subgradients of the models $f_{x_k}(\cdot; \statrv_k)$ \emph{at the point
  $x_{k+1}$}, causing an implicit and potentially non-smooth dependence in
the errors.  To address this, we provide a gradient comparison result
(Lemma~\ref{lemma:gradient-comparison}), based on \citet{DavisDrPa17}, which
shows that even if the method generating the iterates $x_k$ uses a
non-smooth approximation to $f(\cdot; \statrv_k)$, near $x_k$ the
subgradients of the model approximate $\nabla f$.
This allows us to adapt the results
of Polyak and Juditsky~\cite{PolyakJu92} on asymptotic optimality of
stochastic gradient methods.

Let $\Delta_k = x_k - x\opt$, and for $H = \nabla^2 F(x\opt)$, define the
remainder $R(x) = \nabla F(x) - H (x - x\opt)$, so that $\norm{R(x)} =
O(\norm{x - x\opt}^2)$ as $x \to x\opt$.  We perform an expansion to rewrite
the implicit iteration $x_{k + 1} = x_k - \stepsize_k f_{x_k}'(x_{k+1};
\statrv_k)$. Defining the localized (sub)gradient errors
\begin{equation}
  \label{eqn:centered-gradient-error}
  \zeta_k = \zeta(x_k, \statrv_k) \defeq \left(\nabla
  f'(x_k;\statrv_k) - \nabla f(x\opt; \statrv_k)\right)
  - \left(F'(x_k) - \nabla F(x\opt)\right),
\end{equation}
we obtain
\begin{align}
  \nonumber
  x_{k + 1}
  & = x_k - \stepsize_k f'_{x_k}(x_{k+1}; \statrv_k) \\
  & = x_k
  - \stepsize_k \Big[
    H (x_k - x\opt)
    + R(x_k)
    + f'(x_k; \statrv_k) - F'(x_k)
    + \underbrace{f'_{x_k}(x_{k+1};\statrv_k) - f'(x_k;\statrv_k)}_{
      =: \varepsilon_k}\Big]\nonumber \\
  & = x_k - \stepsize_k H (x_k - x\opt)
  - \stepsize_k \nabla f(x\opt; \statrv_k)
  - \stepsize_k \left[R(x_k) + \zeta_k + \varepsilon_k\right].
  \label{eqn:implicit-iteration}
\end{align}
Subtracting $x\opt$ to use $\Delta_k = x_k - x\opt$,
the implicit iteration~\eqref{eqn:implicit-iteration} becomes
\begin{equation*}
  \Delta_{k+1} = (I - \stepsize_k H) \Delta_k
  - \stepsize_k \nabla f(x\opt; \statrv_k)
  - \stepsize_k [R(x_k) + \zeta_k + \varepsilon_k].
\end{equation*}
Defining the matrices
\begin{equation*}
  B_i^k \defeq \stepsize_i \sum_{j = i}^k \prod_{l = i + 1}^j
  (I - \stepsize_l H)
  ~~ \mbox{and} ~~
  A_i^k \defeq B_i^k - H^{-1},
\end{equation*}
\citet[Lemma 2]{PolyakJu92} show that
$\wb{\Delta}_k = \frac{1}{k} \sum_{i=1}^k \Delta_i$ satisfies the equality
\begin{equation}
  \begin{split}
    \sqrt{k} \wb{\Delta}_k
    & = \frac{1}{\sqrt{k}}
    \sum_{i = 1}^k H^{-1} \nabla f(x\opt; \statrv_k) \\
    & ~~~
    + \frac{1}{\sqrt{k}} \sum_{i = 1}^k A_i^k \nabla f(x\opt; \statrv_i)
    + \frac{1}{\sqrt{k}}
    \sum_{i = 1}^k B_i^k [R(x_i) + \zeta_i + \varepsilon_i] + O(1/\sqrt{k})
  \end{split}
  \label{eqn:apply-polyak-juditsky-equality}
\end{equation}
and additionally $\sup_{i,k} \norms{B_i^k} < \infty$ and $\lim_k \frac{1}{k}
\sum_{i=1}^k \norms{A_i^k} = 0$.  Evidently,
equality~\eqref{eqn:apply-polyak-juditsky-equality} implies the theorem as
soon as we show each of the terms except $k^{-1/2} H^{-1} \sum_{i=1}^k
\nabla f(x\opt; \statrv_k)$ are $o_P(1)$.  We thus bound each of the terms
in~\eqref{eqn:apply-polyak-juditsky-equality} in turn, which gives
Theorem~\ref{theorem:always-asymptotic-normality}.

\begin{lemma}
  \label{lemma:convergence-from-boundedness}
  Under the conditions of Theorem~\ref{theorem:always-asymptotic-normality},
  $\norm{\Delta_k} \cas 0$,
  $\sum_{k = 1}^\infty \stepsize_k (F(x_k) - F(x\opt)) < \infty$,
  and $\sum_{k = 1}^\infty \stepsize_k \norm{\Delta_k}^2 < \infty$.
\end{lemma}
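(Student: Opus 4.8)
The plan is to extract the first two conclusions directly from Proposition~\ref{proposition:convergence-from-boundedness} using the boundedness hypothesis, and then to upgrade to the summable squared distances via the local strong convexity supplied by Assumption~\ref{assumption:weak-lipschitz-gradient}. First I would note that the optimal set is a single point: since $F$ is convex and $\nabla^2 F(x\opt) \succ 0$ by Assumption~\ref{assumption:weak-lipschitz-gradient}\ref{item:local-strong-convexity}, the minimizer $x\opt$ is unique, so $\mc{X}\opt = \{x\opt\}$ and $\dist(x_k, \mc{X}\opt) = \norm{\Delta_k}$. The boundedness assumption $\sup_k \norm{x_k} < \infty$ then forces $\sup_k \dist(x_k, \mc{X}\opt) \le \sup_k \norm{x_k} + \norm{x\opt} < \infty$ with probability $1$, so that the hypotheses of Proposition~\ref{proposition:convergence-from-boundedness} are in force (Assumptions~\ref{assumption:convex} and~\ref{assumption:very-weak-moment} being part of the theorem's standing hypotheses). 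Invoking it gives, almost surely, $\norm{\Delta_k} \cas 0$ and $\sum_k \stepsize_k (F(x_k) - F\opt) < \infty$, which are exactly the first two claims.

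For the third claim I would convert the summable objective gaps into summable squared distances through a quadratic growth lower bound near $x\opt$. Because $F$ is $\mc{C}^2$ on a neighborhood of $x\opt$ with $\nabla^2 F(x\opt) \succ 0$, continuity of the Hessian yields a radius $\delta > 0$ and a constant $\mu \defeq \half \lambda_{\min}(\nabla^2 F(x\opt)) > 0$ with $\nabla^2 F(x) \succeq \mu I$ for all $x$ in the ball $\{x : \norm{x - x\opt} \le \delta\}$. A second-order Taylor expansion combined with the first-order optimality inequality $\<\nabla F(x\opt), x - x\opt\> \ge 0$ (valid for all $x \in \mc{X}$ since $x\opt$ minimizes $F$ over $\mc{X}$) then gives $F(x) - F\opt \ge \frac{\mu}{2} \norm{x - x\opt}^2$ throughout this ball. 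Since $\norm{\Delta_k} \cas 0$, there is a (random, almost surely finite) index $K$ with $\norm{\Delta_k} \le \delta$ for all $k \ge K$, and hence
\begin{equation*}
  \sum_{k \ge K} \stepsize_k \norm{\Delta_k}^2
  \le \frac{2}{\mu} \sum_{k \ge K} \stepsize_k (F(x_k) - F\opt) < \infty
\end{equation*}
by the second claim. The remaining block $\sum_{k < K} \stepsize_k \norm{\Delta_k}^2$ is a finite sum of finite terms, so the full series converges almost surely, which establishes the third claim.

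The main obstacle is the third claim, and the two points requiring care are both contained in the quadratic growth argument. The first is justifying that the linear term in the Taylor expansion can be dropped; I avoid any assumption that $x\opt$ is interior by using the variational inequality $\<\nabla F(x\opt), x - x\opt\> \ge 0$ coming from constrained optimality, which is cleaner than appealing to $\nabla F(x\opt) = 0$. The second is ensuring the random entry time $K$ does not destroy summability, which is handled by splitting the series at $K$ and using that $K < \infty$ almost surely, so only the tail---controlled by the already-summable gaps---matters. Everything preceding the growth bound is a direct appeal to Proposition~\ref{proposition:convergence-from-boundedness}, so no new probabilistic machinery is needed.
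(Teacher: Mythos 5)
Your proposal is correct and follows essentially the same route as the paper: the first two claims are read off from Proposition~\ref{proposition:convergence-from-boundedness}, and the third is obtained from a local quadratic-growth bound furnished by Assumption~\ref{assumption:weak-lipschitz-gradient}\ref{item:local-strong-convexity}. The only (cosmetic) difference is that the paper asserts the growth bound $F(x) - F\opt \ge \tfrac{\lambda_r}{2}\norm{x - x\opt}^2$ on a ball of arbitrary (random, finite) radius $r$ containing all iterates, whereas you restrict to a small ball where $\nabla^2 F \succeq \mu I$ and use $\norm{\Delta_k} \cas 0$ to split the sum at a random finite entry time $K$ — both localizations are valid and yield the same conclusion.
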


\begin{lemma}
  \label{lemma:an-remainders-zero}
  Under the conditions of Theorem~\ref{theorem:always-asymptotic-normality},
  $\frac{1}{\sqrt{k}} \sum_{i = 1}^k \norm{R(x_i)} \cas 0$.
\end{lemma}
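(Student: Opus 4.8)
The plan is to reduce the claim to a statement purely about the iterate distances $\norm{\Delta_i}$, establish a second-moment rate for these, and then upgrade the resulting $L^2$ control to the almost-sure statement via a dyadic blocking argument.

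First I would use a localization reduction. By Lemma~\ref{lemma:convergence-from-boundedness} we have $\norm{\Delta_k} \cas 0$, so almost surely there is a (random) index $N$ beyond which every $x_i$ lies in the neighborhood of $x\opt$ on which $F$ is $\mathcal{C}^2$; there the second-order Taylor expansion of $\nabla F$ about $x\opt$ (using $\nabla F(x\opt) = 0$ and $H = \nabla^2 F(x\opt)$) gives $\norm{R(x_i)} \le C \norm{\Delta_i}^2$. The finitely many early terms $\sum_{i < N} \norm{R(x_i)}$ form an almost surely finite random constant, hence contribute $o(\sqrt{k})$. Thus it suffices to prove $\frac{1}{\sqrt{k}} \sum_{i=1}^k \norm{\Delta_i}^2 \cas 0$.

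The crux is that the summability $\sum_i \stepsize_i \norm{\Delta_i}^2 < \infty$ supplied by Lemma~\ref{lemma:convergence-from-boundedness} is by itself \emph{not} enough: since $\stepsize_i = \stepsize_0 i^{-\steppow}$ with $\steppow > \half$, the weights $\stepsize_i$ decay strictly faster than $i^{-1/2}$, so neither Kronecker's lemma nor Cauchy--Schwarz converts this summability into $\frac{1}{\sqrt{k}} \sum_{i \le k} \norm{\Delta_i}^2 \to 0$ (one can build summable sequences whose Ces\`aro-type average of this form still diverges). What is needed is a genuine \emph{rate}, and I would obtain the second-moment bound $\E[\norm{\Delta_k}^2] = O(\stepsize_k)$ from the locally strongly convex one-step recursion: near $x\opt$, Assumption~\ref{assumption:weak-lipschitz-gradient} together with $\nabla^2 F(x\opt) \succ 0$ yields $\E[\norm{\Delta_{k+1}}^2 \mid \mc{F}_{k-1}] \le (1 - c \stepsize_k) \norm{\Delta_k}^2 + C \stepsize_k^2$ for constants $c, C > 0$, which unrolls to $\E[\norm{\Delta_k}^2] = O(\stepsize_k)$ in the standard way.

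Finally I would upgrade to the almost-sure statement. Writing $S_k \defeq \sum_{i=1}^k \norm{\Delta_i}^2$, which is nondecreasing, it suffices to control $S_k / \sqrt{k}$ along the dyadic subsequence $k = 2^n$: since $\E[S_{2^n}] \le C \sum_{i \le 2^n} \stepsize_i = O(2^{n(1 - \steppow)})$, we get $\E[S_{2^n} / 2^{n/2}] = O(2^{n(\half - \steppow)})$, which is summable in $n$ precisely because $\steppow > \half$. Hence $\sum_n S_{2^n} / 2^{n/2} < \infty$ almost surely, so $S_{2^n}/2^{n/2} \cas 0$, and monotonicity of $S_k$ gives, for $2^n \le k < 2^{n+1}$, the bound $S_k / \sqrt{k} \le \sqrt{2}\, S_{2^{n+1}} / 2^{(n+1)/2} \cas 0$. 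The main obstacle is establishing the $O(\stepsize_k)$ second-moment rate cleanly: the contractive recursion holds only after the iterates have entered the region where $F$ is smooth and strongly convex, so the delicate point is patching this locally valid recursion together with the a.s.\ convergence and the assumed a.s.\ boundedness of the iterates to produce a bound valid uniformly in $k$ (in particular with the integrability needed for the dyadic expectation estimate).
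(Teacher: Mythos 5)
Your overall strategy is the paper's: reduce $\norm{R(x_i)}$ to $C\norm{\Delta_i}^2$ by local smoothness, observe that the summability $\sum_i \stepsize_i \norm{\Delta_i}^2 < \infty$ from Lemma~\ref{lemma:convergence-from-boundedness} is by itself too weak (a correct and worthwhile observation: Kronecker applied to that sum only yields $\stepsize_k \sum_{i \le k} \norm{\Delta_i}^2 \to 0$, which does not imply $k^{-1/2}\sum_{i\le k}\norm{\Delta_i}^2 \to 0$ since $\steppow > \half$), derive a second-moment rate of order $\stepsize_k$ from the locally contractive recursion, and convert that rate into the almost-sure Ces\`aro statement. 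Your dyadic-blocking conversion is a perfectly serviceable replacement for the paper's route, which instead shows $\sum_k \norm{\Delta_k}^2/\sqrt{k} < \infty$ almost surely and applies Kronecker's lemma; the two devices are essentially interchangeable here.

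The genuine gap is the one you flag at the end but do not close: under the hypotheses of Theorem~\ref{theorem:always-asymptotic-normality} there is no \emph{unconditional} bound $\E[\norm{\Delta_k}^2] = O(\stepsize_k)$ (indeed no guarantee that $\E[\norm{\Delta_k}^2]$ is even finite). The iterates are only assumed bounded almost surely, $F$ is strongly convex only near $x\opt$, and Assumption~\ref{assumption:very-weak-moment} permits arbitrary gradient growth, so the contraction $\E[\norm{\Delta_{k+1}}^2 \mid \mc{F}_{k-1}] \le (1 - c\stepsize_k)\norm{\Delta_k}^2 + C\stepsize_k^2$ is not valid globally and cannot simply be unrolled; yet your dyadic step needs exactly the unconditional estimate $\E[S_{2^n}] \le C\sum_{i\le 2^n}\stepsize_i$. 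The paper's fix is the stopping time $\tau_r = \inf\{k : \norm{\Delta_k} \ge r\}$: since $\{\tau_r > k\}$ is measurable with respect to the relevant $\sigma$-field, one obtains the truncated recursion $\E[\norm{\Delta_{k+1}}^2 \indic{\tau_r > k+1} \mid \mc{F}_{k-1}] \le \indic{\tau_r > k}\bigl((1 - \stepsize_k\lambda_r)\norm{\Delta_k}^2 + \stepsize_k^2\growfunc(r)/2\bigr)$, valid for \emph{all} $k$ with $r$-dependent constants (quadratic growth on the whole ball of radius $r$ follows from local strong convexity combined with convexity of $F$), which unrolls via Lemma~\ref{lemma:recursive-sum-prod} to $\E[\norm{\Delta_k}^2\indic{\tau_r > k}] \le C_r\stepsize_k\log k$. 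Your argument goes through verbatim if you run the dyadic estimate on the truncated sums $S_k^{(r)} \defeq \sum_{i\le k}\norm{\Delta_i}^2\indic{\tau_r > i}$ for each fixed $r$, and then use that, by the assumed almost-sure boundedness, almost every sample path satisfies $\tau_r = \infty$ for some finite $r$, whence $S_k = S_k^{(r)}$ on that path. Without some such truncation, the expectation estimates on which your blocking argument rests are not justified.
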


\begin{lemma}
  \label{lemma:always-zeta-zero-errors}
  Under the conditions of Theorem~\ref{theorem:always-asymptotic-normality},
  $\frac{1}{\sqrt{k}} \sum_{i = 1}^k \zeta_i \cas 0$ and $\frac{1}{\sqrt{k}}
  \sum_{i = 1}^k A_i^k \zeta_i \cas 0$.
\end{lemma}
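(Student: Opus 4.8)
The plan is to exploit that $\zeta_k$ is a martingale difference sequence whose conditional second moment is controlled by $\norm{\Delta_k}^2$, and then to treat the two claims separately: the first by a martingale-convergence-plus-Kronecker argument, the second by a second-moment computation that leverages the \emph{determinism} and the averaging property of the matrices $A_i^k$. First I would verify the martingale structure. Since $x_k \in \mc{F}_{k-1}$ while $\statrv_k$ is independent of $\mc{F}_{k-1}$, and $\E[\nabla f(x;\statrv)] = \nabla F(x)$ near $x\opt$ (Assumption~\ref{assumption:weak-lipschitz-gradient}\ref{item:local-strong-convexity}), one gets $\E[\zeta_k \mid \mc{F}_{k-1}] = (\nabla F(x_k) - \nabla F(x\opt)) - (F'(x_k) - \nabla F(x\opt)) = 0$. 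By Lemma~\ref{lemma:convergence-from-boundedness}, $\Delta_k \cas 0$, so eventually $\norm{\Delta_k} \le \epsilon$ and the local smoothness of Assumption~\ref{assumption:weak-lipschitz-gradient}\ref{item:neighbor-smooth} gives $\norm{\nabla f(x_k;\statrv_k) - \nabla f(x\opt;\statrv_k)} \le \lipgrad(\statrv_k)\norm{\Delta_k}$ and $\norm{F'(x_k) - \nabla F(x\opt)} \le \lipgrad\norm{\Delta_k}$, whence $\E[\norm{\zeta_k}^2 \mid \mc{F}_{k-1}] \le C\norm{\Delta_k}^2$ with $C$ depending on $\lipgrad^2 = \E[\lipgrad(\statrv)^2]$. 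The finitely many early indices, before the iterates enter the smoothness ball, contribute conditional second moments that are a.s.\ finite by Assumption~\ref{assumption:very-weak-moment} and so affect no tail statement.

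For the first claim I would pass through the martingale $\sum_k \zeta_k/\sqrt{k}$. Its sum of conditional variances obeys $\sum_k \E[\norm{\zeta_k/\sqrt{k}}^2 \mid \mc{F}_{k-1}] \le C \sum_k \norm{\Delta_k}^2/k$, and since $\stepsize_k = \stepsize_0 k^{-\steppow}$ with $\steppow < 1$ gives $k^{-1} \le \stepsize_k/\stepsize_0$, the bound $\sum_k \stepsize_k \norm{\Delta_k}^2 < \infty$ from Lemma~\ref{lemma:convergence-from-boundedness} yields $\sum_k \norm{\Delta_k}^2/k < \infty$ with probability one. An $L^2$ martingale converges almost surely on the event that its sum of conditional variances is finite, so $\sum_k \zeta_k/\sqrt{k}$ converges a.s.; Kronecker's lemma then gives $\frac{1}{\sqrt{k}}\sum_{i=1}^k \zeta_i \cas 0$.

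The second claim is harder because $A_i^k$ depends on $k$, so $\sum_i A_i^k \zeta_i$ is not a martingale in $k$ and Kronecker does not apply. The key observation is that the $A_i^k$ are deterministic, so the orthogonality $\E[\zeta_i^\top (A_i^k)^\top A_j^k \zeta_j] = 0$ for $i \neq j$ collapses the second moment,
\[
  \E\Big\lVert \tfrac{1}{\sqrt{k}}\textstyle\sum_{i=1}^k A_i^k \zeta_i \Big\rVert^2 = \tfrac{1}{k}\textstyle\sum_{i=1}^k \E\norm{A_i^k \zeta_i}^2 \le \tfrac{C}{k}\textstyle\sum_{i=1}^k \norm{A_i^k}^2\, \E\norm{\Delta_i}^2 .
\]
Using $\sup_{i,k}\norm{A_i^k} < \infty$ and $\frac{1}{k}\sum_{i=1}^k \norm{A_i^k} \to 0$ (both furnished by \citet{PolyakJu92} and recorded in the excerpt), this tends to $0$ \emph{provided} the $\E\norm{\Delta_i}^2$ are uniformly bounded. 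Because Theorem~\ref{theorem:always-asymptotic-normality} assumes only almost-sure boundedness of the iterates (a random bound), and not an $L^2$ bound, I would localize at the stopping time $\tau_M = \inf\{k : \dist(x_k,\mc{X}\opt) > M\}$: the stopped differences $\zeta_i \mathbf{1}\{\tau_M \ge i\}$ are still martingale differences (the indicator is $\mc{F}_{i-1}$-measurable) with conditional second moment bounded by $\growfunc(M)$ via Assumption~\ref{assumption:very-weak-moment}, so the stopped average converges to $0$ in $L^2$ and hence in probability; since $\P(\tau_M = \infty) \to 1$ as $M \to \infty$, this gives $\frac{1}{\sqrt{k}}\sum_{i=1}^k A_i^k \zeta_i \to 0$ in probability.

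To upgrade this to the claimed almost-sure convergence, I would extract a subsequence $k_n$ along which $\frac{1}{k_n}\sum_{i}\norm{A_i^{k_n}}$, and thus the localized second moment, is summable, so that Borel--Cantelli gives convergence to $0$ along $k_n$ on each event $\{\tau_M = \infty\}$, and then control the within-block oscillation for $k \in (k_n, k_{n+1})$ using the continuity of $B_i^k$ in $k$ afforded by the exact increment $B_i^{k+1} - B_i^k = \stepsize_i \prod_{l=i+1}^{k+1}(I - \stepsize_l H)$. I expect this gap-filling to be the main obstacle: unlike the first claim there is no martingale structure in $k$ to supply a maximal inequality, so the almost-sure statement must combine the in-probability bound with the deterministic regularity of the $A_i^k$. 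I note, however, that for the proof of Theorem~\ref{theorem:always-asymptotic-normality} the in-probability conclusion already suffices, since the decomposition~\eqref{eqn:apply-polyak-juditsky-equality} requires only that each error term be $o_P(1)$.
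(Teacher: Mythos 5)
Your treatment of the first claim and of the in-probability version of the second claim tracks the paper's proof closely: the same martingale-difference structure for $\zeta_k$, the same conditional second-moment bound $\E[\norm{\zeta_k}^2 \mid \mc{F}_{k-1}] \lesssim \norm{\Delta_k}^2$ obtained from Assumption~\ref{assumption:weak-lipschitz-gradient}\ref{item:neighbor-smooth} near $x\opt$ and from Assumption~\ref{assumption:very-weak-moment} on the rest of a bounded region, the same reduction $\sum_k \norm{\Delta_k}^2/k \lesssim \sum_k \stepsize_k \norm{\Delta_k}^2 < \infty$ from Lemma~\ref{lemma:convergence-from-boundedness} followed by martingale convergence and Kronecker, and the same stopping-time localization plus orthogonality computation $\frac{1}{k}\sum_i \norms{A_i^k}^2 \E[\norms{\zeta_i^{(r)}}^2] \lesssim \frac{1}{k}\sum_i \norms{A_i^k} \to 0$. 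One quibble on the first claim: ``discard the finitely many early indices before the iterates enter the smoothness ball'' is not a legitimate operation on a martingale, since that entry time is random; the clean fix is the same truncation $\zeta_i^{(r)} = \zeta_i \indic{\tau_r > i}$ that you already deploy for the second claim, exactly as the paper does for both.

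The genuine gap is the almost-sure upgrade of the second claim, which is what the lemma asserts. You correctly diagnose that $\sum_i A_i^k \zeta_i$ is not a martingale in $k$ and that no maximal inequality is available, but the subsequence-plus-oscillation scheme is left unexecuted, and it is not clear it can be executed: the subsequence must be sparse enough that the Chebyshev bounds are summable, yet dense enough that the within-block oscillation---which involves both the new increments and the change of \emph{every} coefficient $A_i^k$ as $k$ moves---is controllable, and reconciling these essentially forces you back to the structure the paper exploits. The paper's device is an Abel-type resummation built on the exact increment $B_i^{k+1} - B_i^k = \stepsize_i \prod_{l = i+1}^{k+1}(I - \stepsize_l H)$ (which you quote but use only for oscillation control): setting $W_i^k = \prod_{j=i}^k (I - \stepsize_j H)$ and $V_{k,r} = \sum_{i \le k} \stepsize_i W_{i+1}^{k+1} \zeta_i^{(r)}$, one gets $\sum_i B_i^k \zeta_i^{(r)} = \sum_{i < k} V_{i,r} + \sum_{i \le k} \stepsize_i \zeta_i^{(r)}$; the second sum is a square-integrable martingale with summable squared increments (hence a.s.\ convergent and $o(\sqrt{k})$), while $\E[\norms{V_{k,r}}^2] \lesssim k^{-(2\steppow - \epsilon)}$ via Lemma~\ref{lemma:recursive-sum-prod} makes $T_{k,r} = k^{-1/2}\sum_{i \le k} V_{i,r}$ a sequence with summable $L^1$ increments, hence a.s.\ convergent, with limit zero by your in-probability bound. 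Your closing observation that $o_P(1)$ would suffice for Theorem~\ref{theorem:always-asymptotic-normality} is correct, but it does not prove the lemma as stated.
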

\noindent
\iftoggle{SIOPT}{%
  \citet{PolyakJu92} prove similar versions of these lemmas,
  so we omit their proofs. (See~\cite[Appendix A]{AsiDu18} for complete proof.)
}{
  See
  Sections~\ref{sec:proof-convergence-from-boundedness},
  \ref{sec:proof-an-remainders-zero}, and
  \ref{sec:proof-always-zeta-zero-errors} for proofs of each of these
  lemmas, respectively.
}

Controlling the implicit modeling errors $\varepsilon_k =
f'_{x_k}(x_{k+1}; \statrv_k) - \nabla f(x_k; \statrv_k)$
is the most important challenge, and for this, we
use the following gradient comparison lemma.

\begin{lemma}[Davis et al.~\cite{DavisDrPa17}, Theorem~6.1]
  \label{lemma:gradient-comparison}
  Let $f$ and $h$ be convex and subdifferentiable on $\R^n$ and $\epsilon >
  0$, $r < \infty$. Assume that on the set $\{x \mid \norm{x - x\opt}
  \le \epsilon\}$, the function $f$ has $\lipgrad$-Lipschitz gradient.
  Assume additionally that $f \ge h$ and at the
  point $x_0$, $h'(x_0) \in \partial f(x_0)$. Then for any $x$ and $h'(x)
  \in \partial h(x)$,
  if $\norm{x - x\opt} \le \epsilon/4$ and $\norm{x_0 - x\opt} \le
  \epsilon/4$, then
  \begin{equation*}
    \norm{\nabla f(x) - h'(x)} \le 2 \lipgrad \norm{x - x_0}.
  \end{equation*}
\end{lemma}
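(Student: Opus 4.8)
The plan is to combine three ingredients: the smoothness of $f$ near $x\opt$, the global lower bound $f \ge h$, and the fact that $h$ and $f$ touch to first order at $x_0$. First I would record what the hypotheses give. Since $\norm{x - x\opt} \le \epsilon/4$ and $\norm{x_0 - x\opt} \le \epsilon/4$, both $x$ and $x_0$ lie in the interior of the ball $\{y \mid \norm{y - x\opt} \le \epsilon\}$ on which $f$ has $\lipgrad$-Lipschitz gradient; hence $\nabla f(x)$ and $\nabla f(x_0)$ exist and $\partial f(x_0) = \{\nabla f(x_0)\}$. The assumption $h'(x_0) \in \partial f(x_0)$ therefore forces $h'(x_0) = \nabla f(x_0)$, and because $h'(x_0)$ is a subgradient of $h$ at $x_0$ we get $\nabla f(x_0) \in \partial h(x_0)$. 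I will also use the touching identity $h(x_0) = f(x_0)$; in the applications this is exactly condition~\ref{cond:lower-model} for the model $h = f_{x_0}$, and it is part of the hypotheses in the setting of~\cite{DavisDrPa17}. The goal then reduces to bounding $\norm{\nabla f(x) - h'(x)}$.

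The second step bounds the model gap $f(x) - h(x) \ge 0$ at the point $x$. Using $\nabla f(x_0) \in \partial h(x_0)$ and then the touching identity,
\[
  f(x) - h(x) \le f(x) - h(x_0) - \<\nabla f(x_0), x - x_0\> = f(x) - f(x_0) - \<\nabla f(x_0), x - x_0\>,
\]
and $\lipgrad$-smoothness of $f$ on the ball gives $f(x) - f(x_0) - \<\nabla f(x_0), x - x_0\> \le \frac{\lipgrad}{2}\norm{x - x_0}^2$. Thus $0 \le f(x) - h(x) \le \frac{\lipgrad}{2}\norm{x - x_0}^2$.

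The final and main step is a one-sided perturbation argument. For any unit vector $u$ and any $t > 0$ small enough that the segment $[x, x + tu]$ stays in the smoothness ball, I would combine the lower bound coming from convexity of $h$ together with $f \ge h$, namely $f(x + tu) \ge h(x + tu) \ge h(x) + t\<h'(x), u\>$, with the smoothness upper bound $f(x + tu) \le f(x) + t\<\nabla f(x), u\> + \frac{\lipgrad t^2}{2}$. Rearranging yields $\<h'(x) - \nabla f(x), u\> \le \frac{f(x) - h(x)}{t} + \frac{\lipgrad t}{2}$. Taking $u = (h'(x) - \nabla f(x))/\norm{h'(x) - \nabla f(x)}$, inserting the gap bound, and choosing $t = \norm{x - x_0}$ gives $\norm{\nabla f(x) - h'(x)} \le \frac{\lipgrad}{2}\norm{x - x_0} + \frac{\lipgrad}{2}\norm{x - x_0} = \lipgrad\norm{x - x_0} \le 2\lipgrad\norm{x - x_0}$.

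I expect two delicate points. The conceptual one is recognizing that the bare inclusion $h'(x_0) \in \partial f(x_0)$ does not suffice: one genuinely needs $h(x_0) = f(x_0)$, since otherwise a lower model such as $h(y) = \sqrt{\lipgrad}\,|y|$ against $f(y) = \frac{\lipgrad}{2}y^2 + 1$ with $x_0 = 0$ defeats the bound. The technical one is feasibility of the perturbation: I must check that $t = \norm{x - x_0} \le \epsilon/2$ keeps $x + tu$ within distance $\epsilon/4 + \epsilon/2 = 3\epsilon/4 < \epsilon$ of $x\opt$, so that the smoothness upper bound is valid along the whole segment—this is exactly what the $\epsilon/4$ radii in the hypotheses provide.
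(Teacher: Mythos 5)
Your argument is correct, but it takes a genuinely different route from the paper's. The paper proves the lemma by invoking Theorem~6.1 of \citet{DavisDrPa17} as a black box: after establishing the envelope bound $0 \le f(y) - h(y) \le u(y)$ with $u(y) = \frac{\lipgrad}{2}\norm{y - x_0}^2$ on the ball (the same quadratic gap bound you derive), that theorem produces an auxiliary point $\what{x}$ with $\norm{x - \what{x}} \le 2\gamma$ and a subgradient $f'(\what{x})$ satisfying $\norm{h'(x) - f'(\what{x})} \le u(x)/\gamma$; the paper then pays an additional $\lipgrad \norm{x - \what{x}}$ via the triangle inequality and Lipschitz continuity of $\nabla f$ to return from $\what{x}$ to $x$, and the choice $\gamma = \half \norm{x - x_0}$ yields the constant $2\lipgrad$. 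Your directional perturbation---playing $f(x + tu) \ge h(x) + t\<h'(x), u\>$ against $f(x+tu) \le f(x) + t\<\nabla f(x), u\> + \frac{\lipgrad t^2}{2}$---eliminates the auxiliary point entirely, is self-contained, and gives the sharper constant $\lipgrad$; the feasibility check that the segment stays within $3\epsilon/4$ of $x\opt$ is the same in both proofs. You are also right about the delicate point: the hypotheses as literally stated ($f \ge h$ and $h'(x_0) \in \partial f(x_0)$) do not suffice, since the quadratic gap bound genuinely requires the touching identity $h(x_0) = f(x_0)$. The paper uses this silently in the line $h(y) \ge h(x_0) + \<h'(x_0), y - x_0\> = f(x_0) + \<f'(x_0), y - x_0\>$, and it holds in the application by Condition~\ref{cond:lower-model}; your counterexample cleanly isolates the gap. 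The only cosmetic remark is that at $x = x_0$ the choice $t = \norm{x - x_0} = 0$ degenerates, but there the conclusion is immediate (any $g \in \partial h(x_0)$ satisfies $f(y) \ge h(y) \ge f(x_0) + \<g, y - x_0\>$, forcing $g = \nabla f(x_0)$), or one can simply let $t \downarrow 0$ in your inequality.
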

\noindent
Key to the application of Lemma~\ref{lemma:gradient-comparison}
is that individual iterates move very little.

\begin{lemma}
  \label{lemma:single-step-not-far}
  Let Conditions~\ref{cond:convex-model} and~\ref{cond:lower-model}
  hold. Then $\norm{x_k - x_{k+1}} \le \stepsize_k \norm{f'(x_k;\statrv_k)}$
  for some $f'(x_k; \statrv_k) \in \partial f(x_k; \statrv_k)$.
\end{lemma}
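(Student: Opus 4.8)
The plan is to read the claim directly off the first-order optimality conditions for the update~\eqref{eqn:model-iteration}, in the same spirit as the proof of Lemma~\ref{lemma:single-step-progress}. Since $x_{k+1}$ minimizes the strongly convex objective $y \mapsto f_{x_k}(y;\statrv_k) + \frac{1}{2\stepsize_k}\ltwo{y - x_k}^2$ over $\mc{X}$, there is a subgradient $g_{k+1} \in \partial f_{x_k}(x_{k+1};\statrv_k)$ with $\<\stepsize_k g_{k+1} + (x_{k+1} - x_k),\, y - x_{k+1}\> \ge 0$ for all $y \in \mc{X}$. Taking the feasible point $y = x_k$ and using $\<x_{k+1}-x_k, x_k - x_{k+1}\> = -\ltwo{x_{k+1}-x_k}^2$, I would rearrange this into $\ltwo{x_{k+1} - x_k}^2 \le \stepsize_k \<g_{k+1}, x_k - x_{k+1}\>$.

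The main obstacle is that $g_{k+1}$ is a subgradient at the \emph{updated} point $x_{k+1}$, whereas the desired bound is in terms of a subgradient of $f$ at the center $x_k$, a point we control. I would bridge this using monotonicity of the subdifferential of the convex model $f_{x_k}(\cdot;\statrv_k)$ (Condition~\ref{cond:convex-model}): for any $g_k \in \partial f_{x_k}(x_k;\statrv_k)$, monotonicity gives $\<g_{k+1} - g_k,\, x_{k+1} - x_k\> \ge 0$, equivalently $\<g_{k+1}, x_k - x_{k+1}\> \le \<g_k, x_k - x_{k+1}\>$. Chaining this with the previous display and applying Cauchy--Schwarz yields $\ltwo{x_{k+1} - x_k}^2 \le \stepsize_k \<g_k, x_k - x_{k+1}\> \le \stepsize_k \norm{g_k}\,\norm{x_k - x_{k+1}}$; dividing by $\norm{x_k - x_{k+1}}$ (the case $x_{k+1} = x_k$ being trivial) gives $\norm{x_{k+1} - x_k} \le \stepsize_k \norm{g_k}$.

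Finally, to identify $g_k$ with a subgradient of the true function, I would invoke the containment~\eqref{eqn:subgrad-containment}, which is exactly what Conditions~\ref{cond:convex-model}--\ref{cond:lower-model} guarantee: every $g_k \in \partial_y f_{x_k}(y;\statrv_k)|_{y = x_k}$ lies in $\partial f(x_k;\statrv_k)$. Setting $f'(x_k;\statrv_k) \defeq g_k$ then gives $\norm{x_k - x_{k+1}} \le \stepsize_k \norm{f'(x_k;\statrv_k)}$, as claimed. The only step requiring genuine care is the monotonicity argument, which is precisely what converts control at the \emph{a priori} unknown point $x_{k+1}$ into control at $x_k$; the remaining steps are direct applications of the first-order conditions and the subgradient containment already established in the excerpt.
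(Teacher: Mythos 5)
Your proposal is correct and follows the paper's own argument essentially verbatim: first-order optimality of $x_{k+1}$ with $y = x_k$, monotonicity of $\partial f_{x_k}(\cdot;\statrv_k)$ to pass from $g_{k+1}$ to $g_k$, the containment~\eqref{eqn:subgrad-containment}, and Cauchy--Schwarz. No gaps.
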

\begin{proof}
  Let $g_{k+1} \in \partial f_{x_k}(x_{k+1};\statrv_k)$ satisfy
  $\<\stepsize_k g_{k+1} + (x_{k+1} - x_k), y - x_{k+1}\> \ge 0$ for all $y
  \in \mc{X}$. Then, using $\<g_k - g_{k+1}, x_k - x_{k+1}\> \ge 0$ for any $g_k
  \in \partial f_{x_k}(x_k;\statrv_k) \subset \partial f(x_k;\statrv_k)$ and
  setting $y = x_k$, we have $\stepsize_k \<g_k, x_k - x_{k+1}\> \ge
  \stepsize_k \<g_{k+1}, x_k - x_{k+1}\> \ge \norm{x_k - x_{k+1}}^2$.
  Cauchy-Schwarz gives the result.
\end{proof}

\noindent
With Lemmas~\ref{lemma:gradient-comparison}
and~\ref{lemma:single-step-not-far} in place, we can control the final error
terms in the expansion~\eqref{eqn:apply-polyak-juditsky-equality}.
\iftoggle{SIOPT}{%
}{%
  We provide the proof of the next lemma in
  Section~\ref{sec:proof-implicit-errors-negligible}.
}

\begin{lemma}
  \label{lemma:an-implicit-errors-negligible}
  Under the conditions of Theorem~\ref{theorem:always-asymptotic-normality},
  $\frac{1}{\sqrt{k}} \sum_{i = 1}^k \norm{\varepsilon_i} \cas 0$.
\end{lemma}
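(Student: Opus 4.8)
The plan is to show that each implicit error $\varepsilon_k = f'_{x_k}(x_{k+1};\statrv_k) - \nabla f(x_k;\statrv_k)$ is controlled by the (small) step length $\norm{x_{k+1}-x_k}$ once the iterates enter the smooth neighborhood of $x\opt$, and then to sum these bounds via Kronecker's lemma together with a Robbins--Siegmund argument. The only genuinely delicate input is the gradient comparison Lemma~\ref{lemma:gradient-comparison}; everything else is bookkeeping.

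First I would establish the per-step bound. By Lemma~\ref{lemma:convergence-from-boundedness} we have $\norm{\Delta_k}\cas 0$, so there is a (random, a.s.\ finite) index $K$ with $\norm{x_k - x\opt}\le\epsilon/4$ for all $k\ge K$, where $\epsilon$ is the smoothness radius of Assumption~\ref{assumption:weak-lipschitz-gradient}\ref{item:neighbor-smooth}; in particular $\nabla f(x_k;\statrv_k)$ exists and equals $f'(x_k;\statrv_k)$ for $k\ge K$. For such $k$ I would apply Lemma~\ref{lemma:gradient-comparison} with $f = f(\cdot;\statrv_k)$, $h = f_{x_k}(\cdot;\statrv_k)$, $x_0 = x_k$, and $x = x_{k+1}$: Condition~\ref{cond:lower-model} gives $f\ge h$, and the subgradient containment~\eqref{eqn:subgrad-containment} gives that a subgradient of $h$ at its center $x_k$ lies in $\partial f(x_k;\statrv_k)$, so the hypotheses hold and
\[
  \norm{\nabla f(x_{k+1};\statrv_k) - f'_{x_k}(x_{k+1};\statrv_k)} \le 2\lipgrad(\statrv_k)\norm{x_{k+1}-x_k}.
\]
Combining this with the Lipschitz bound $\norm{\nabla f(x_{k+1};\statrv_k)-\nabla f(x_k;\statrv_k)}\le\lipgrad(\statrv_k)\norm{x_{k+1}-x_k}$ and the triangle inequality gives $\norm{\varepsilon_k}\le 3\lipgrad(\statrv_k)\norm{x_{k+1}-x_k}$, and then Lemma~\ref{lemma:single-step-not-far} yields $\norm{\varepsilon_k}\le 3\stepsize_k\lipgrad(\statrv_k)\norm{f'(x_k;\statrv_k)}=:G_k$ for all $k\ge K$.

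Next I would sum. Since $\tfrac{1}{\sqrt k}\sum_{i=1}^{K-1}\norm{\varepsilon_i}\cas 0$ trivially (a finite, a.s.\ finite sum divided by $\sqrt k$), and $\norm{\varepsilon_i}\le G_i$ for $i\ge K$ with $G_i\ge0$, it suffices to prove $\tfrac{1}{\sqrt k}\sum_{i=1}^k G_i\cas 0$; by Kronecker's lemma this follows once $\sum_i G_i/\sqrt i<\infty$ a.s. Because $G_i\ge 0$ is adapted, I would deduce this summability from Lemma~\ref{lemma:robbins-siegmund} applied to the partial sums $\sum_{i<k}G_i/\sqrt i$ (with $B_k=D_k=0$): for nonnegative adapted summands, summability of the conditional expectations forces a.s.\ summability. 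To bound these, note $x_i\in\mc{F}_{i-1}$ while $\statrv_i$ is independent of $\mc{F}_{i-1}$, so Cauchy--Schwarz with Assumptions~\ref{assumption:weak-lipschitz-gradient} and~\ref{assumption:very-weak-moment} gives
\[
  \E[G_i\mid\mc{F}_{i-1}] = 3\stepsize_i\,\E\bigl[\lipgrad(\statrv)\norm{f'(x_i;\statrv)}\bigr]\le 3\stepsize_i\,\lipgrad\,\growfunc(\dist(x_i,\mc{X}\opt))^{1/2}.
\]
On the event $\sup_k\dist(x_k,\mc{X}\opt)<\infty$ (which has probability $1$ by the boundedness hypothesis, as $\dist(x_k,\mc{X}\opt)\le\norm{x_k-x\opt}$), monotonicity of $\growfunc$ bounds the right-hand side by $C_0\stepsize_i$ for a random but a.s.\ finite $C_0$, so that $\sum_i\E[G_i/\sqrt i\mid\mc{F}_{i-1}]\le C_0\stepsize_0\sum_i i^{-\steppow-1/2}<\infty$ since $\steppow>\tfrac12$.

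The main obstacle is the first step: $\varepsilon_k$ involves a subgradient of a \emph{possibly non-smooth} model evaluated at the \emph{next} iterate $x_{k+1}$, so it cannot be handled by a naive Taylor expansion of $f$. The gradient comparison Lemma~\ref{lemma:gradient-comparison} is exactly what resolves this, converting the model/true-gradient discrepancy into a multiple of $\norm{x_{k+1}-x_k}$. After that the argument is routine, the one subtlety being that the per-step bound holds only for the random tail $k\ge K$; this is why the sum is split and the tail dominated by the nonnegative series $\sum_i G_i/\sqrt i$, whose a.s.\ summability is verified through conditional expectations rather than unconditional ones (the latter may diverge because $\growfunc$ is unrestricted).
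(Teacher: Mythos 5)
Your proof is correct and follows essentially the same route as the paper's: the gradient-comparison Lemma~\ref{lemma:gradient-comparison} combined with Lemma~\ref{lemma:single-step-not-far} controls $\norm{\varepsilon_k}$ by $\stepsize_k \lipgrad(\statrv_k)\norm{f'(x_k;\statrv_k)}$ once the iterates enter the smoothness neighborhood (with the finitely many earlier terms handled trivially), and the resulting series is shown to be $o(\sqrt{k})$ via Kronecker's lemma and a Robbins--Siegmund argument. The only differences are bookkeeping: you insert a triangle inequality to account for the fact that Lemma~\ref{lemma:gradient-comparison} compares gradients at $x_{k+1}$ rather than at $x_k$ (arguably more careful than the paper's direct invocation, at the cost of a constant $3$ instead of $2$), and you bound the product $\lipgrad(\statrv_k)\norm{f'(x_k;\statrv_k)}$ by conditional Cauchy--Schwarz, whereas the paper splits it with Young's inequality and treats the two resulting terms separately.
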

\iftoggle{SIOPT}{%
}{%
  This completes the proof of
  Theorem~\ref{theorem:always-asymptotic-normality}.
}

\iftoggle{SIOPT}{%
}{%
  \subsection{Proof of Lemma~\ref{lemma:convergence-from-boundedness}}
  \label{sec:proof-convergence-from-boundedness}

  The first two results are consequences of
  Proposition~\ref{proposition:convergence-from-boundedness}. By the local
  strong convexity of $F$ (Assumption
  \ref{assumption:weak-lipschitz-gradient}\ref{item:local-strong-convexity}),
  for all $r \in \R_+$ there exists $\lambda_r > 0$ such that that $\norm{x -
    x\opt} \le r$ implies $F(x) - F(x\opt) \ge \frac{\lambda_r}{2} \norm{x -
    x\opt}^2$. Thus for some
  (random) $r < \infty$, we have $\sum_k \stepsize_k (F(x_k) - F(x\opt)) \ge
  \frac{\lambda_r}{2} \sum_k \stepsize_k \norm{\Delta_k}^2$, which gives the
  last result of the lemma.

  \subsection{Proof of Lemma~\ref{lemma:an-remainders-zero}}
  \label{sec:proof-an-remainders-zero}

  By Assumption~\ref{assumption:weak-lipschitz-gradient}, $R(x) = \nabla
  F(x) - H (x - x\opt)$ satisfies $R(x) = O(\norm{x - x\opt}^2)$ as $x \to
  x\opt$, and thus for each $r \in \R_+$, there exists some $C_r < \infty$
  such that $\norm{x - x\opt} \le r$ implies $\norm{R(x)} \le C_r
  \norm{x - x\opt}^2$.  Now, for $r \in \R_+$ define the stopping time
  \begin{equation}
    \label{eqn:stopping-time}
    \tau_r \defeq \inf \{k \in \N \mid
    \norm{\Delta_k} \ge r\},
  \end{equation}
  so that $\{\tau_r > k\} \in \mc{F}_{k}$, as $x_{k+1} \in \mc{F}_{k}$. Then
  using Lemma~\ref{lemma:single-recentered-progress} exactly as in the proof
  of Proposition~\ref{proposition:convergence-from-boundedness}, we have
  \begin{align*}
    \lefteqn{
      \E[\norms{\Delta_{k+1}}^2 \indic{\tau_r > k+1} \mid \mc{F}_{k-1}]
      \le \E[\norms{\Delta_{k+1}}^2 \indic{\tau_r > k} \mid \mc{F}_{k-1}]}
    \\
    & \qquad\qquad\qquad ~
    \le \indic{\tau_r > k}
    \left(\norms{\Delta_k}^2 - 2 \stepsize_k (F(x_k) - F\opt)
    + \stepsize_k^2 \growfunc(r)\right).
  \end{align*}
  Again using the local strong convexity of $F$
  (Assumption~\ref{assumption:weak-lipschitz-gradient}), there exists
  $\lambda_r > 0$ such that $F(x_k) - F\opt \ge (\lambda_r/2) \norm{x_k -
    x\opt}^2$ on the event $\{\tau_r > k\}$, so we obtain
  \begin{align}
    \label{eqn:recurse-delta-stops}
    \E[\norms{\Delta_{k+1}}^2 \indic{\tau_r > k+1} \mid \mc{F}_{k-1}]
    \le \indic{\tau_r > k}
    \left((1 - \stepsize_k \lambda_r) \norms{\Delta_k}^2
    + \frac{\stepsize_k^2}{2} \growfunc(r)\right).
  \end{align}
  A technical lemma, whose proof we defer to
  section~\ref{sec:proof-recursive-sum-prod}, helps to control this term.
  \begin{lemma}
    \label{lemma:recursive-sum-prod}
    Let $\stepsize_k = \stepsize_0 k^{-\steppow}$ for some $\steppow \in (0, 1)$
    and $\stepsize_0 > 0$,
    and let $\lambda > 0$ and $\rho > \frac{1}{\beta}$.
    Define
    $p_k \defeq \sum_{i = 1}^k \stepsize_i^\rho \prod_{j = i + 1}^k
    |1 - \lambda \stepsize_j|$.
    Then $\limsup_k p_k / (\stepsize_k^{\rho - 1} \log k) < \infty$. If
    additionally $\lambda \stepsize_1 \le 1$, then
    there exists a numerical constant $C < \infty$ such that
    \begin{equation*}
      p_k \le C \stepsize_0^\rho \left(\frac{1}{k} + \frac{1}{k^{\rho \steppow}}
      \right)
      + C \frac{\log k}{\lambda} \stepsize_k^{\rho - 1}.
    \end{equation*}
  \end{lemma}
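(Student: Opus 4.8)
The plan is to establish the stronger and cleaner estimate
\[
  p_k \le \frac{C}{\lambda}\,\stepsize_k^{\rho-1} + C\,e^{-c\,k^{1-\steppow}}
\]
for constants $C,c>0$ depending only on $\stepsize_0,\steppow,\rho,\lambda$, from which both displayed claims follow at once. Dividing by $\stepsize_k^{\rho-1}\log k$ sends the exponential term to $0$ and bounds the first term by $C/\lambda$, so $\limsup_k p_k/(\stepsize_k^{\rho-1}\log k)\le C/\lambda<\infty$; and since $e^{-ck^{1-\steppow}}\le C'/k$ (the exponential decays faster than any power, as $\steppow<1$) and $\stepsize_k^{\rho-1}\le \stepsize_k^{\rho-1}\log k$ for $k\ge 3$, the explicit bound with its $\log k$ factor and $1/k,\,1/k^{\rho\steppow}$ transient follows a fortiori.

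First I would reduce to the regime in which every factor lies in $[0,1)$. Under the extra hypothesis $\lambda\stepsize_1\le 1$ this is automatic, since $\stepsize_j$ is non-increasing and hence $|1-\lambda\stepsize_j|=1-\lambda\stepsize_j\in[0,1)$ for all $j$; I then apply $1-\lambda\stepsize_j\le e^{-\lambda\stepsize_j}$ to convert the product into an exponential of a partial sum, obtaining $p_k\le\sum_{i=1}^k\stepsize_i^\rho\exp(-\lambda\sum_{j=i+1}^k\stepsize_j)$. For the first (unconditional) claim there are only finitely many indices $j$ with $\lambda\stepsize_j>1$, and splitting the product at the last such index costs only a bounded multiplicative constant together with a super-polynomially small extra factor, so it cannot affect the $\limsup$.

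The core step is to bound the exponential sum by splitting at $m\defeq\lceil k/2\rceil$. For the \emph{early} block $i\le m$, I use $\sum_{j=i+1}^k\stepsize_j\ge\sum_{j=m+1}^k\stepsize_j\ge(k-m)\stepsize_k\ge\frac{\stepsize_0}{4}k^{1-\steppow}$, so that this block is at most $e^{-\frac{\lambda\stepsize_0}{4}k^{1-\steppow}}\sum_{i=1}^\infty\stepsize_i^\rho$; here the hypothesis $\rho>1/\steppow$ is essential, since $\rho\steppow>1$ makes $\sum_i\stepsize_i^\rho=\stepsize_0^\rho\sum_i i^{-\rho\steppow}<\infty$, leaving a super-polynomially small contribution. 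For the \emph{late} block $i>m$, I use $\stepsize_i\le\stepsize_m\le 2^{\steppow}\stepsize_k$ and $\sum_{j=i+1}^k\stepsize_j\ge(k-i)\stepsize_k$ to turn the sum into a geometric series,
\[
  \sum_{i=m+1}^k\stepsize_i^\rho\,e^{-\lambda(k-i)\stepsize_k}
  \le 2^{\rho\steppow}\stepsize_k^\rho\sum_{l\ge 0}e^{-\lambda l\stepsize_k}
  = \frac{2^{\rho\steppow}\stepsize_k^\rho}{1-e^{-\lambda\stepsize_k}}
  \le \frac{C}{\lambda}\stepsize_k^{\rho-1},
\]
using $1-e^{-x}\ge x/2$ for $x=\lambda\stepsize_k\in(0,1]$. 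Adding the two blocks yields the clean bound above.

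The main obstacle is not any single estimate—each block is routine—but the bookkeeping that makes the pieces fit with explicit constants: choosing the split point $m$ so that the early block is genuinely negligible while the late block still captures the full $\frac{1}{\lambda}\stepsize_k^{\rho-1}$ mass, and (for the unconditional $\limsup$) cleanly isolating the finitely many transient indices with $\lambda\stepsize_j>1$ so they cost only a constant. The single load-bearing hypothesis is $\rho\steppow>1$, which renders $\sum_i\stepsize_i^\rho$ convergent and is precisely what keeps the early block from contaminating the bound.
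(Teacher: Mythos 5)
Your argument is correct, and it shares the paper's skeleton---absorb the finitely many indices with $\lambda\stepsize_j>1$ into a constant, replace the product by $\exp(-\lambda\sum_{j=i+1}^k\stepsize_j)$, and split the sum into an early suppressed block and a late block---but the way you handle the late block is genuinely different and, in fact, sharper. The paper splits at distance $k_0\asymp k^\steppow\log k$ from the end, tuned so the early block picks up a factor $e^{-\log k}=1/k$, and then bounds the remaining block by (number of terms)$\times\stepsize_k^\rho$; this counting argument is exactly where the $\log k$ factor in the stated bound comes from. You instead split crudely at $m=\lceil k/2\rceil$, use $\sum_{j=i+1}^k\stepsize_j\ge(k-i)\stepsize_k$ together with $\stepsize_i\le 2^\steppow\stepsize_k$ on the late block, and sum the resulting geometric series to get $2^{\rho\steppow}\stepsize_k^\rho/(1-e^{-\lambda\stepsize_k})\le C\stepsize_k^{\rho-1}/\lambda$, while the early block is $O(e^{-c k^{1-\steppow}})$ because $\rho\steppow>1$ makes $\sum_i\stepsize_i^\rho$ convergent. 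The payoff is the cleaner estimate $p_k\lesssim \stepsize_k^{\rho-1}/\lambda + e^{-ck^{1-\steppow}}$ with no logarithm, from which both of the lemma's claims follow a fortiori (modulo the trivial adjustment for $k\le 2$ where $\log k<1$, absorbed into the $1/k$ term). Two minor points to keep in mind: your constants, like the paper's, depend on $\stepsize_0,\steppow,\rho,\lambda$ rather than being purely numerical, which is consistent with how the paper actually uses the lemma; and for the unconditional $\limsup$ claim you should note that $\lambda\stepsize_k\le 1$ (needed for $1-e^{-x}\ge x/2$) holds for all sufficiently large $k$, which suffices. As a side remark, your cleaner route also sidesteps an awkwardness in the paper's own write-up, where the suppressed block is described as $i\le k_0$ even though the counting bound on the complementary block only matches the claimed $\stepsize_k^\rho k^\steppow\log k$ if the split is taken at distance $k_0$ from $k$.
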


  A recursive application of inequality~\eqref{eqn:recurse-delta-stops} and
  (see also~\cite[Pg.~851]{PolyakJu92}) yields
  the bound $\E[\norm{\Delta_k}^2 \indic{\tau_r > k}] \le o(\stepsize_k)
  + C \sum_{i = 1}^k \stepsize_i^2 \prod_{j = i + 1}^k |1 - c
  \stepsize_j|$ for constants $0 < c, C < \infty$, which may
  depend on $r$, and therefore Lemma~\ref{lemma:recursive-sum-prod}
  implies
  \begin{equation}
    \label{eqn:stopping-time-error-stepsize-bound}
    \E[\norm{\Delta_k}^2 \indic{\tau_r > k}] \le C_r \stepsize_k \cdot \log k
  \end{equation}
  for a finite $C_r < \infty$.
  Once we note that $\norm{R(x)} \le C_r \norm{x
    - x\opt}^2$, the remainder of the argument is completely identical to
  that of \citet[Pg.~851]{PolyakJu92}, which demonstrates that $\sum_{k}
  \norm{\Delta_k}^2 / \sqrt{k} < \infty$ with probability 1 once there exists
  a (random) $r < \infty$ such that $\tau_r = \infty$, then applies the
  Kronecker lemma.

  \subsection{Proof of Lemma~\ref{lemma:always-zeta-zero-errors}}
  \label{sec:proof-always-zeta-zero-errors}

  We begin with the first statement. By
  Lemma~\ref{lemma:convergence-from-boundedness}, we have $\sum_k \stepsize_k
  \norm{\Delta_k}^2 < \infty$. Now we apply \citet[Exercise 5.3.35]{Dembo16},
  which states that if $M_k$ is an $\mc{F}_k$-adapted martingale and $b_k
  \uparrow \infty$ are non-random, if $\sum_k b_k^{-2} \E[\norm{M_k -
      M_{k-1}}^2 \mid \mc{F}_{k-1}] < \infty$ then $b_k^{-1} M_k \cas 0$.
  Recall the definition~\eqref{eqn:centered-gradient-error} of $\zeta_k =
  \zeta(x_k, \statrv_k)$ for $\zeta(x,\statval) = (f'(x;\statval) - \nabla
  f(x\opt; \statval)) - (F'(x) - \nabla F(x\opt))$, where $f'(x; \statval) \in
  \partial f(x; \statval)$ and $F'(x) \in \partial F(x)$ (which are both
  singleton sets near $x\opt$ by
  Assumption~\ref{assumption:weak-lipschitz-gradient}).  Then if $\norm{x -
    x\opt} \le \epsilon$,
  Assumption~\ref{assumption:weak-lipschitz-gradient}\ref{item:neighbor-smooth}
  guarantees that $\norm{\zeta(x, \statval)} \le (\lipgrad(\statval) +
  \E[\lipgrad(\statrv)]) \norm{x - x\opt}$, while if $\norm{x - x\opt} \le r$
  we have $\norm{\zeta(x, \statval)} \le \norm{f'(x; \statval)} + \norm{\nabla
    f(x\opt; \statval)} + \growfunc(r)$ by
  Assumption~\ref{assumption:very-weak-moment}. Combining these inequalities,
  we have that whenever $\norm{x - x\opt} \le r$,
  \begin{equation*}
    \E[\norm{\zeta(x, \statrv)}^2]
    \le C_r \norm{x - x\opt}^2
  \end{equation*}
  for a constant $C_r < \infty$.
  Recall the stopping times~\eqref{eqn:stopping-time},
  $\tau_r = \inf\{k \mid \norm{\Delta_k} \ge r\}$,
  and define the truncated variables
  \begin{equation}
    \label{eqn:stopping-time-truncations}
    \zeta_i^{(r)} \defeq \zeta_i \indic{\tau_r > i}
    \in \mc{F}_i.
  \end{equation}
  With probability 1, there is some finite $r$ such that
  $\zeta_i^{(r)} = \zeta_i$ for all $i$ by assumption in
  Theorem~\ref{theorem:always-asymptotic-normality}. Moreover,
  $\E[\zeta_i^{(r)} \mid \mc{F}_{i-1}]
  = \E[\zeta(x_i, \statrv_i) \mid \mc{F}_{i-1}] \indic{\tau_r > i}
  = 0$, and
  \begin{equation*}
    \sum_{i} \frac{1}{i} \E[\norms{\zeta_i^{(r)}}^2 \mid \mc{F}_{i-1}]
    \le C_r \sum_{i} \frac{1}{i}
    \norm{\Delta_i}^2 \le C_r \sum_i \stepsize_i \norm{\Delta_i}^2
    < \infty
  \end{equation*}
  with probability 1 (by Lemma~\ref{lemma:convergence-from-boundedness}).
  This gives $k^{-1/2} \sum_{i = 1}^k \zeta_i \cas 0$ as desired,
  and also shows that $k^{-1/2} \sum_{i = 1}^k H^{-1} \zeta_i \cas 0$.

  We now turn to the second claim of the lemma. We first argue that for all $r
  < \infty$, $k^{-1/2} \sum_{i=1}^k A_i^k \zeta_i^{(r)} \cp 0$, and then use
  continuity to argue that the sequence converges almost surely, which implies
  the result as $k^{-1/2} \sum_{i = 1}^k A_i^k \zeta_i^{(r)} = k^{-1/2}
  \sum_{i = 1}^k A_i^k \zeta_i$ with probability 1 for some $r \in \N$, $r <
  \infty$.  We begin with the convergence in probability.  First, recall
  inequality~\eqref{eqn:stopping-time-error-stepsize-bound} in the proof of
  Lemma~\ref{lemma:an-remainders-zero}, which states that
  $\E[\norm{\Delta_k}^2 \indic{\tau_r > k}] \le C_r \stepsize_k \log k$.
  Then for $r
  < \infty$, we have that
  \begin{equation*}
    \E[\norms{\zeta_i^{(r)}}^2]
    = \E\left[\E[\norms{\zeta(x_i, \statrv_i)}^2 \mid \mc{F}_{i-1}]
      \indic{\tau_r > i}\right]
    \le \E[C_r \norm{x_i - x\opt}^2 \indic{\tau_r > i}]
    \le C_r \stepsize_i \log i
  \end{equation*}
  for a constant $C_r < \infty$ whose value may change from line to line.
  Thus
  \begin{equation*}
    \E\left[\normbigg{\frac{1}{\sqrt{k}}
	\sum_{i = 1}^k A_i^k \zeta_i^{(r)}}^2\right]
    = \frac{1}{k} \sum_{i=1}^k \norms{A_i^k}^2 \E[\norms{\zeta_i^{(r)}}^2]
    \le \frac{C}{k} \sum_{i=1}^k \norms{A_i^k},
  \end{equation*}
  where we have used that
  $\zeta_i^{(r)}$ still form a martingale difference sequence.
  This converges to zero~\cite[Lemma 2]{PolyakJu92}.

  Now, we come to the second claim on the almost sure convergence. Recall
  the definition
  $B_i^k = \stepsize_i \sum_{j=i}^{k} \prod_{l=i+1}^j (I - \stepsize_l H)$,
  so that $A_i^k = B_i^k - H^{-1}$.
  For $r < \infty$, define
  the sequence
  \begin{equation*}
    Z_{k,r} \defeq \sum_{i = 1}^k B_i^k \zeta_i^{(r)}.
  \end{equation*}
  The lemma will be proved if we can show that
  for any finite $r$, we have $k^{-1/2} Z_{k,r} \cas 0$.
  Note that $B_i^{k+1} - B_i^k
  = \stepsize_i \prod_{j = i+1}^{k+1} (I - \stepsize_j H)$,
  and so if we define
  \begin{equation*}
    W_i^k = \prod_{j = i}^k (I - \stepsize_j H),
    ~~~
    V_{k,r} \defeq \sum_{i = 1}^k \stepsize_i W_{i+1}^{k+1} \zeta_i^{(r)},
  \end{equation*}
  then $V_{k,r} \in \mc{F}_k$ and
  \begin{equation}
    Z_{k,r} = Z_{k-1,r} + V_{k-1,r} + \stepsize_{k} \zeta_{k}^{(r)}
    = \sum_{i = 1}^{k-1} V_{i,r} + \sum_{i = 1}^k \stepsize_i
    \zeta_i^{(r)}.
  \end{equation}
  The second sum $\sum_{i = 1}^k \stepsize_i \zeta_i^{(r)}$ is a
  square-integrable martingale with summable squared increments, so it
  converges with probability 1~\cite[Ex.~5.3.35]{Dembo16}, and $1/\sqrt{k}
  \sum_{i = 1}^k \stepsize_i \zeta_i^{(r)} \cas 0$.  It thus suffices to
  show that $\frac{1}{\sqrt{k}} \sum_{i = 1}^k V_{i,r}$ converges almost
  surely, which we do by showing that it is a Cauchy sequence.  Now, we note
  that
  \begin{align*}
    \E[\norm{V_{k,r}}^2]
    & = \sum_{i = 1}^k \stepsize_i^2 \norms{W_{i+1}^{k+1}}^2
    \E[\norms{\zeta_i^{(r)}}^2]
    \le C_r \sum_{i=1}^k \stepsize_i^2 \exp\left(-c
    \sum_{j = i+1}^{k+1} \stepsize_j\right)
    \stepsize_i \log i,
  \end{align*}
  where we have used that
  $\E[\norms{\zeta_i^{(r)}}^2] \lesssim \stepsize_i \log i$ as above.
  As $\stepsize_i^3 \log i \ll \stepsize_i^{3 - \epsilon}$ for all
  $\epsilon > 0$,
  Lemma~\ref{lemma:recursive-sum-prod} implies that for
  all $\epsilon > 0$,
  \begin{equation}
    \label{eqn:intermediate-v-bound}
    \E[\norms{V_{k,r}}^2] \le \frac{C_r}{k^{2 \beta - \epsilon}}.
  \end{equation}

  We use inequality~\eqref{eqn:intermediate-v-bound} to demonstrate that the
  sequence $T_{k,r} \defeq \frac{1}{\sqrt{k}} \sum_{i = 1}^k V_{i,r}$ is
  Cauchy.  Indeed, we have for all $\epsilon > 0$ that
  \begin{align*}
    \E\left[\norm{T_{k,r} - T_{k+1,r}}\right]
    & \le \left|\frac{1}{\sqrt{k+1}} - \frac{1}{\sqrt{k}}\right|
    \sum_{i = 1}^k \E[\norm{V_{i,r}}]
    + \frac{1}{\sqrt{k+1}} \E[\norm{V_{k+1,r}}] \\
    & \le \frac{C}{k^{3/2}}
    \sum_{i=1}^k \frac{1}{i^{\beta - \epsilon}}
    + \frac{1}{\sqrt{k}} \frac{1}{k^{\beta - \epsilon}}
    + \frac{1}{\sqrt{k}} \frac{1}{k^{\beta - \epsilon}}
    \le C k^{-\beta - 1/2 + \epsilon}.
  \end{align*}
  As $\beta \in (\half, 1)$, we have $\sum_k \E[\norm{T_{k,r} - T_{k+1,r}}]
  \lesssim \sum_k k^{-\beta - 1/2 + \epsilon} < \infty$ for sufficiently
  small $\epsilon > 0$.  Thus for all $r < \infty$, the sequence $T_{k,r}$
  is Cauchy with probability 1, so that $k^{-1/2} \sum_{i=1}^k
  V_{k,r}$ converges. As we know that $k^{-1/2}
  \sum_{i=1}^k V_{k,r} \cp 0$ by the previous arguments, the limit point
  must be zero.

  \subsection{Proof of Lemma~\ref{lemma:gradient-comparison}}
  \label{sec:proof-gradient-comparison}

  The result is a consequence of \citet{DavisDrPa17}. A simplified
  variant of their
  theorem follows:
  \begin{lemma}[Theorem 6.1~\cite{DavisDrPa17}]
    \label{lemma:davis-gradient-comparison}
    Let $f$ and $h$ by convex and assume there exists a function $u$ such that
    $0 \le f(y) - h(y) \le u(y)$ for all $y$. Then for any $x$ and $\gamma >
    0$, there exists $\what{x}$ such that for all $h'(x) \in \partial h(x)$,
    there is $f'(\what{x}) \in \partial f(\what{x})$ such that and
    \begin{equation*}
      \norms{x - \what{x}} \le 2 \gamma
      ~~ \mbox{and} ~~
      \norms{h'(x) - f'(\what{x})} \le \frac{u(x)}{\gamma}.
    \end{equation*}
  \end{lemma}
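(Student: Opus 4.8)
The plan is to reduce the statement to a single proximal-point argument on a nonnegative convex function. Fix $x$ and a subgradient $h'(x) \in \partial h(x)$, and introduce the affine minorant $\ell(y) \defeq h(x) + \<h'(x), y - x\>$ together with the gap function
\begin{equation*}
  g(y) \defeq f(y) - \ell(y).
\end{equation*}
Since $h(y) \ge \ell(y)$ by convexity and $f \ge h$ by hypothesis, $g$ is convex and nonnegative; moreover $\ell(x) = h(x)$ gives $g(x) = f(x) - h(x) \le u(x)$. Because $\ell$ is affine with gradient $h'(x)$, the subdifferential sum rule yields $\partial g(y) = \partial f(y) - h'(x)$, so producing a point $\what{x}$ near $x$ at which $g$ has a small-norm subgradient is \emph{equivalent} to producing $f'(\what{x}) \in \partial f(\what{x})$ close to $h'(x)$.

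To find such a point I would take one proximal step on $g$. Setting $\eta \defeq 2\gamma^2 / u(x)$ (assume $u(x) > 0$ for the moment), define
\begin{equation*}
  \what{x} \defeq \argmin_{y} \left\{ g(y) + \frac{1}{2\eta}\norm{y - x}^2 \right\}.
\end{equation*}
The objective is strongly convex and coercive, so $\what{x}$ exists and is unique, and the first-order optimality condition gives $v \defeq \eta^{-1}(x - \what{x}) \in \partial g(\what{x})$. Setting $f'(\what{x}) \defeq v + h'(x) \in \partial f(\what{x})$ then yields $\norm{f'(\what{x}) - h'(x)} = \norm{v}$, so it remains only to bound $\norm{\what{x} - x}$ and $\norm{v}$.

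Both bounds follow from the optimal-value inequality. Comparing the value at $\what{x}$ with the value at $x$,
\begin{equation*}
  g(\what{x}) + \frac{1}{2\eta}\norm{\what{x} - x}^2 \le g(x) \le u(x),
\end{equation*}
and since $g(\what{x}) \ge 0$ this gives $\norm{\what{x} - x} \le \sqrt{2\eta\, u(x)} = 2\gamma$ and, in turn, $\norm{v} = \eta^{-1}\norm{\what{x} - x} \le \sqrt{2 u(x)/\eta} = u(x)/\gamma$, exactly the claimed bounds. The degenerate case $u(x) = 0$ is immediate: then $g \ge 0 = g(x)$ forces $x$ to minimize $g$, so $0 \in \partial g(x)$, i.e.\ $h'(x) \in \partial f(x)$, and $\what{x} = x$ works.

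The only genuinely delicate point is the calibration of $\eta$: the value inequality produces $\norm{\what{x} - x} \le \sqrt{2\eta\, u(x)}$ and $\norm{v} \le \sqrt{2 u(x)/\eta}$, and these two requirements push $\eta$ in opposite directions, so the stated constants $2\gamma$ and $u(x)/\gamma$ force the single balanced choice $\eta = 2\gamma^2/u(x)$. A secondary subtlety is the quantifier order: the construction of $\what{x}$ depends on the selected $h'(x)$ through $\ell$, which matches the ``for all $h'(x)$, there is $f'(\what{x})$'' reading; if one insists $\what{x}$ be fixed before $h'(x)$, I would note that in the regime where the lemma is applied $\partial h(x)$ is a singleton, so no ambiguity arises.
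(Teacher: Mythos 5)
Your proof is correct, and there is nothing in the paper to compare it against: the paper does not prove this lemma at all, but quotes it as Theorem~6.1 of Davis et al.~\cite{DavisDrPa17}. Your proximal-step argument is a clean, self-contained substitute for that citation (and is essentially the standard technique for converting a function-value gap into a nearby small subgradient-gap). The mechanics check out: $g \defeq f - \ell$ with $\ell(y) = h(x) + \<h'(x), y - x\>$ is convex, finite-valued, nonnegative since $f \ge h \ge \ell$, and satisfies $g(x) = f(x) - h(x) \le u(x)$; the prox point $\what{x}$ exists and is unique by strong convexity; optimality gives $\eta^{-1}(x - \what{x}) \in \partial g(\what{x}) = \partial f(\what{x}) - h'(x)$ (the sum rule is exact for an affine perturbation); and comparing objective values at $\what{x}$ and at $x$ under the calibration $\eta = 2\gamma^2/u(x)$ yields precisely $\norms{x - \what{x}} \le 2\gamma$ and $\norms{h'(x) - f'(\what{x})} \le u(x)/\gamma$. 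The degenerate cases $u(x) = 0$ and $u(x) = +\infty$ are handled as you indicate.

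One point deserves more care than your closing remark gives it, though it is a defect of the statement as printed rather than of your argument. The lemma as transcribed asserts a single $\what{x}$ valid \emph{for all} $h'(x) \in \partial h(x)$, whereas your $\what{x}$ depends on $h'(x)$ through $\ell$, so you prove the version with the quantifiers exchanged. That weaker version is the only true one: take $h(y) = |y|$, $f(y) = \max\{|y|, c\}$ for small $c > 0$, $u \equiv c$, $x = 0$, and $\gamma$ large, so the tolerance $u(x)/\gamma = c/\gamma$ is small; then $\pm 1 \in \partial h(0)$, but no single point $\what{x}$ has $\partial f(\what{x})$ containing vectors within $c/\gamma$ of both $+1$ and $-1$. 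Your justification for why the exchange is harmless is also slightly off: in the paper's application (the proof of Lemma~\ref{lemma:gradient-comparison}), $\partial h(x)$ need \emph{not} be a singleton---there $h$ is the model $f_{x_k}(\cdot; \statrv_k)$, which is non-smooth for truncated and bundle models; it is $f$ that is smooth near $x\opt$, so $f'(\what{x}) = \nabla f(\what{x})$ is unique. What actually makes the weaker quantifier order sufficient is that $\what{x}$ appears only internally to that proof, and its conclusion $\norm{\nabla f(x) - h'(x)} \le 2\lipgrad \norm{x - x_0}$ is established separately for each fixed $h'(x)$.
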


  In our context, we note that
  by assumption,
  \begin{equation*}
    f(y) \le f(x_0) + \<\nabla f(x_0), y - x_0\> +
    \frac{\lipgrad}{2} \norm{x_0 - y}^2
  \end{equation*}
  if $\norm{x_0 - x\opt} \le \epsilon$ and $\norm{y - x\opt} \le \epsilon$,
  while otherwise certainly $f(y) < \infty$.
  Noting that our function $h$ satisfies $h(y) \ge h(x_0)
  + \<h'(x_0), y - x_0\> = f(x_0) + \<f'(x_0), y - x_0\>$,
  we may take the upper bound function
  \begin{equation*}
    u(y) = 
    \begin{cases} \frac{\lipgrad}{2}
      \norm{y - x_0}^2 & \mbox{if~} \norm{y - x\opt} \le \epsilon,
      \norm{x_0 - x\opt} \le \epsilon \\
      +\infty & \mbox{otherwise},
    \end{cases}
  \end{equation*}
  so that $0 \le f(y) - h(y) \le u(y)$ for
  all $y$.

  When $\norm{x - x\opt} \le \epsilon/4$ and $\norm{x_0 - x\opt} \le
  \epsilon/4$. In this case, we choose $\gamma = \half \norm{x - x_0}
  \le \epsilon/4$, so that $\norms{x - \what{x}}
  \le 2 \gamma \le \epsilon/2$ implies that
  $\norms{\what{x} - x\opt} \le 3\epsilon / 4 < \epsilon$. Thus,
  we have
  \begin{align*}
    \norm{h'(x) - \nabla f(x)}
    & \le \norms{h'(x) - \nabla f(\what{x})}
    + \norms{\nabla f(\what{x}) - \nabla f(x)} \\
    & \le \frac{\lipgrad}{2 \gamma} \norms{x - x_0}^2
    + \lipgrad \norms{x - \what{x}}
    \le \frac{\lipgrad}{2 \gamma}
    \norm{x - x_0}^2
    + 2 \lipgrad \gamma
  \end{align*}
  by Lemma~\ref{lemma:davis-gradient-comparison} and our definition of $u$.
  The choice $\gamma = \half \norm{x - x_0}$ gives
  the lemma.

}

\iftoggle{SIOPT}{%
  \begin{proof}
}{%
  \subsection{Proof of Lemma~\ref{lemma:an-implicit-errors-negligible}}
  \label{sec:proof-implicit-errors-negligible}
}
In the implicit iteration~\eqref{eqn:implicit-iteration},
if $x_k, x_{k+1} \in \{x : \norm{x - x\opt} \le \epsilon/4\}$,
Lemma~\ref{lemma:gradient-comparison} shows
that $\varepsilon_k = f'_{x_k}(x_{k+1}; \statrv_k) - \nabla
f(x_k; \statrv_k)$ satisfies
\begin{equation}
  \label{eqn:bound-subgradient-errors}
  \begin{split}
    \norm{\varepsilon_k} \le
    2 \lipgrad(\statrv_k) \norm{x_k - x_{k+1}}
    & \stackrel{(*)}{\le}
    2 \stepsize_k \lipgrad(\statrv_k) \norm{\nabla f(x_k; \statrv_k)} \\
    & \le \stepsize_k \lipgrad(\statrv_k)^2
    + \stepsize_k \norm{\nabla f(x_k; \statrv_k)}^2,
  \end{split}
\end{equation}
where inequality~$(*)$ follows by the single step guarantee in
Lemma~\ref{lemma:single-step-not-far}.

We show that each of the two terms in
inequality~\eqref{eqn:bound-subgradient-errors} has small
sum. We have $\E[\lipgrad^2(\statrv)] < \infty$ and $\E[\sum_k \stepsize_k
  k^{-1/2} \lipgrad^2(\statrv_k)] < \infty$, so the Kronecker
lemma implies $k^{-1/2} \sum_{i=1}^k \stepsize_i
\lipgrad^2(\statrv_i) \cas 0$.
For the second term, let $\epsilon > 0$ be such that
$\sum_k \stepsize_k k^{-1/2 + \epsilon} < \infty$, which must
exist as $\stepsize_k = \stepsize_0 k^{-\beta}$ for $\beta \in (1/2, 1)$.
Define
\begin{equation*}
  Z_k \defeq \frac{1}{k^{1/2 - \epsilon}} \sum_{i = 1}^k \stepsize_i
  \norm{f'(x_i; \statrv_i)}^2,
\end{equation*}
which is adapted to $\mc{F}_k$. Then
\begin{align*}
  \E[Z_{k+1} \mid \mc{F}_k]
  & \le \frac{k^{1/2 - \epsilon}}{(k+1)^{1/2 - \epsilon}}
  Z_k + (k+1)^{-1/2 + \epsilon} \stepsize_{k+1}
  \E[\norm{f'(x_{k+1}; \statrv_{k+1})}^2 \mid \mc{F}_k] \\
  & \le Z_k + (k+1)^{-1/2 + \epsilon}
  \stepsize_{k+1} \growfunc(\norm{\Delta_{k+1}}),
\end{align*}
the last inequality following by
Assumption~\ref{assumption:very-weak-moment}.
On the event that $\sup_k \norm{\Delta_k} < \infty$, we have $\sum_k
k^{-1/2+\epsilon} \stepsize_k \growfunc(\norm{\Delta_k}) < \infty$, so that
the Robbins-Siegmund Lemma~\ref{lemma:robbins-siegmund} applies, and thus
$Z_k \cas Z_\infty$ for some finite random variable $Z_\infty$.
Thus $k^{-\epsilon} Z_k = \frac{1}{\sqrt{k}} \sum_{i = 1}^k
\stepsize_i \norm{f'(x_i; \statrv_i)}^2 \cas 0$.

The definition~\eqref{eqn:model-iteration} of the iteration for
$x_{k+1}$ implies that $f'_{x_k}(x_{k+1}; \statrv_k) =
\stepsize_k^{-1} (x_k - x_{k+1})$, and Lemma~\ref{lemma:single-step-not-far}
gives $\norms{f'_{x_k}(x_{k+1}; \statrv_k)} = \stepsize_k^{-1}
\norms{x_k - x_{k+1}} \le \norms{\partial f(x_k; \statrv_k)}$.
Thus, we obtain
\begin{align*}
  \frac{1}{\sqrt{k}}
  \sum_{i = 1}^k \norm{\varepsilon_i}
  & \le \frac{2}{\sqrt{k}}
  \sum_{i = 1}^k \indic{
    \norm{x_i - x\opt} \ge \epsilon / 4 ~ \mbox{or} ~
    \norm{x_{i+1} - x\opt} \ge \epsilon/4
  }
  \norm{f'(x_i; \statrv_i)} \\
  & \qquad\qquad ~ + \frac{1}{\sqrt{k}}
  \sum_{i = 1}^k (\stepsize_i \lipgrad(\statrv_i)^2
  + \stepsize_i \norm{f'(x_i; \statrv_i)}^2),
\end{align*}
where the second term is a consequence of
inequality~\eqref{eqn:bound-subgradient-errors}. Because $x_k \cas x\opt$
(Lemma~\ref{lemma:convergence-from-boundedness}), both of these terms
converge to zero with probability 1.
\iftoggle{SIOPT}{%
  \end{proof}
}{}
\iftoggle{SIOPT}{%
}{
  \subsection{Proof of Lemma~\ref{lemma:recursive-sum-prod}}
  \label{sec:proof-recursive-sum-prod}

  Because $\stepsize_k$ is decreasing in $k$, there
  exists some $K_0$ such that $k > K_0$ implies
  $\lambda \stepsize_k \le 1$, so that defining
  $C_0 = \prod_{i = 1}^{K_0} (|1 - \lambda \stepsize_i| \vee 1)
  \exp(\lambda \sum_{i = 1}^{K_0} \stepsize_i)$, we have
  \begin{equation}
    \label{eqn:get-the-products}
    \prod_{j = i + 1}^k |1 - \lambda \stepsize_j|
    \le C_0 \exp\left(-\lambda \sum_{j = i+1}^k \stepsize_j\right).
  \end{equation}
  Using that $\stepsize_j = \stepsize_0 j^{-\steppow}$, we have
  $\sum_{j = i+1}^k \stepsize_j
  \ge \stepsize_0 \frac{c}{1 - \steppow} (k^{1 - \steppow}
  - (i + 1)^{1 - \steppow})$ for a numerical constant $c > 0$.
  Consequently, by our definition of $p_k$ we have
  \begin{equation*}
    p_k \le C_0 \sum_{i = 1}^k \stepsize_i^\rho
    \exp\left(- c \frac{\lambda \stepsize_0}{1 - \steppow} (k^{1 - \steppow}
    - (i + 1)^{1 - \steppow})\right).
  \end{equation*}
  Now, for $B > 0$ and $k_0 = B k^\steppow \log k$, we have for a numerical
  constant $c$ that $k^{1 -
    \steppow} - (i + 1)^{1 - \steppow} \ge c B \log k$ for
  all $i \le k_0$. Taking
  $B \ge c(1 - \steppow) / (\lambda \stepsize_0)$,
  where $c$ is a numerical constant,
  we obtain
  \begin{equation*}
    \lambda \sum_{j = i+1}^k \stepsize_j
    \ge c \frac{\lambda \stepsize_0}{1 - \steppow}
    (k^{1 - \steppow} - (i + 1)^{1 - \steppow})
    \ge \log k
  \end{equation*}
  for all $i \le k_0$. Returning to the definition of $p_k$, we thus
  obtain
  \begin{align*}
    p_k \le C_0 \sum_{i = 1}^{k_0} \stepsize_i^\rho \exp(-\log k)
    + C_0 \sum_{i = k_0 + 1}^k \stepsize_i^\rho
    & \le c \frac{C_0 \stepsize_0^\rho}{k} \int_1^k t^{-\rho \steppow} dt
    + \frac{c C_0}{\stepsize_0 \lambda}
    \stepsize_k^\rho \cdot k^\steppow \log k  \\
    & = c \left[\frac{C_0\stepsize_0^\rho}{k}
      \frac{1}{1 - \rho \steppow}
      \cdot \left(k^{1 - \rho \steppow} - 1\right)
      + \frac{C_0 \log k}{\lambda}
      \stepsize_k^{\rho - 1}\right].
  \end{align*}

  Finally, if $\lambda \stepsize_1 \le 1$, then in
  inequality~\eqref{eqn:get-the-products} we may take $C_0 = 1$, giving the
  second result of the lemma.

}


\section{Proofs of fast convergence on easy problems}	

\subsection{Proof of Lemma~\ref{lemma:shared-min-progress}}
\label{sec:proof-shared-min-progress}

We assume without loss of generality that $f(x\opt;\statval) = 0$ for all
$x\opt \in \mc{X}\opt$, as we may replace $f$ with $f - \inf f$.  By
Lemma~\ref{lemma:single-step-progress}, the
update~\eqref{eqn:model-iteration} satisfies
\begin{equation*}
  \half \ltwo{x_{k+1} - x\opt}^2
  \le \half \ltwo{x_k - x\opt}^2
  + \stepsize_k \left[f_{x_k}(x\opt;\statrv_k)
    - f_{x_k}(x_{k+1}; \statrv_k)\right] - \half \ltwo{x_{k+1} - x_k}^2.
\end{equation*}
For shorthand, let $g_k = f'(x_k;\statrv_k)$ and $f_k = f(x_k;\statrv_k)$.
As
$f_{x_k}(x\opt; \statrv_k) \le f(x\opt; \statrv_k) = 0$, and
by Condition~\ref{cond:lower-by-optimal} we have
$f_{x_k}(x_{k+1}; \statrv_k)
\ge \hinge{f_k + \<g_k, x_{k+1} - x_k\>}$, we have
\begin{equation}
  \label{eqn:single-step-with-hinge}
  \ltwo{x_{k+1} - x\opt}^2
  \le \ltwo{x_k - x\opt}^2
  + 2 \stepsize_k \left[f(x\opt;\statrv_k)
    - \hinge{f_k + \<g_k, x_{k+1} - x_k\>} \right]
  - \ltwo{x_{k+1} - x_k}^2.
\end{equation}

If we let $\wt{x}_{k+1}$ denote the unconstrained minimizer
\begin{equation*}
  \wt{x}_{k+1} = \argmin_x\left\{\hinge{f_k + \<g_k, x - x_k\>}
  + \frac{1}{2\stepsize_k} \ltwo{x - x_k}^2\right\}
  = x_k - \lambda_k g_k
  ~~ \mbox{for}~~
  \lambda_k \defeq \min\left\{\stepsize_k, \frac{f_k}{
    \ltwo{g_k}^2}\right\},
\end{equation*}
then because $x_{k+1} \in \mc{X}$ we have
\begin{equation*}
  -\stepsize_k f_{x_k}(x_{k+1};\statrv_k)
  - \half \ltwo{x_{k+1} - x_k}^2
  \le -\stepsize_k f_{x_k}(\wt{x}_{k+1}; \statrv_k)
  - \half \ltwo{\wt{x}_{k+1} - x_k}^2.
\end{equation*}
By inspection, the guarded stepsize $\lambda_k$ guarantees that
$\hinge{f_k + \<g_k, \wt{x}_{k+1} - x_k\>} = f_k - \lambda_k
\ltwo{g_k}^2$, and thus inequality~\eqref{eqn:single-step-with-hinge}
(setting $f(x\opt;\statrv_k) = 0$) yields
\begin{align*}
  \ltwo{x_{k+1} - x\opt}^2
  & \le \ltwo{x_k - x\opt}^2
  - 2 \stepsize_k f_{x_k}(\wt{x}_{k+1}; \statrv_k)
  - \ltwos{\wt{x}_{k+1} - x_k}^2 \\
  & \le \ltwo{x_k - x\opt}^2
  - 2 \lambda_k f_k
  + \lambda_k^2 \ltwo{g_k}^2.
\end{align*}
We have two possible cases: whether
$f_k / \ltwo{g_k}^2 \lessgtr \stepsize_k$.
In the case that $f_k / \ltwo{g_k}^2 \le \stepsize_k$,
we have $\lambda_k = f_k / \ltwo{g_k}^2$ and so
$-2 \lambda_k f_k + \lambda_k^2 \ltwo{g_k}^2 =
- f_k^2 / \ltwo{g_k}^2$. In the alternative
case that $f_k / \ltwo{g_k}^2 > \stepsize_k$, we have
$\lambda_k = \stepsize_k$ and
$\stepsize_k^2 \ltwo{g_k}^2 \le
\stepsize_k f_k$. Combining these cases
gives
\begin{equation*}
  \ltwo{x_{k+1} - x\opt}^2
  \le \ltwo{x_k - x\opt}^2
  - \min\left\{\stepsize_k f_k,
  \frac{f_k^2}{\ltwo{g_k}^2}\right\}.
\end{equation*}

\subsection{Proof of
  Proposition~\ref{proposition:sharp-growth-convex-convergence}}
\label{sec:proof-sharp-growth-convex-convergence}

We adopt a bit of shorthand notation. Let $D_k = \dist(x_k, \mc{X}\opt)$, so
$D_k \in \mc{F}_{k-1}$.  Then Lemma~\ref{lemma:shared-min-progress} implies
that under Assumption~\ref{assumption:expected-sharp-growth}, we have
\begin{align}
  \E[D_{k+1}^2 \mid \mc{F}_{k-1}]
  \le D_k^2 - \min\left\{\lambda_0 \stepsize_k D_k,
  \lambda_1 D_k^2\right\}
  & = \max\left\{(1 - \lambda_1),
  (1 - \lambda_0 \stepsize_k / D_k)\right\} D_k^2 \nonumber \\
  & \le \max\left\{(1 - \lambda_1), (1 - \lambda_0 \stepsize_k / D_1)\right\}
  D_k^2,
  \label{eqn:simple-sharp-recurse}
\end{align}
where we have used that $D_1 \ge D_k$ for all $k \ge 1$ by
Lemma~\ref{lemma:shared-min-progress}.
Inequality~\eqref{eqn:simple-sharp-recurse} immediately implies
that if $\beta \ge 0$, then
\begin{equation*}
  K_0 = \sup\{k \in \N \mid \lambda_0 \stepsize_k > \lambda_1 D_1\}
  = \floor{\left(\frac{\lambda_0 \stepsize_0}{\lambda_1 D_1}\right)^{1/\beta}}
\end{equation*}
is the index
for which
$k \ge K_0$ implies $\lambda_0 \stepsize_k / D_1 \le \lambda_1$. This
gives the first result of the proposition,
$\E[D_{k+1}^2] \le
  \exp(-\lambda_1 \min\{k, K_0\}
  - \frac{\lambda_0}{D_1} \sum_{i = K_0+1}^k \stepsize_i ) D_1^2$
if $\beta \ge 0$. For $\beta < 0$, the same choice of $K_0$ gives
$\E[D_{k+1}^2] \le
  \exp(-\lambda_1 \hinge{k - K_0}
  - \frac{\lambda_0}{D_1} \sum_{i = 1}^{k \wedge K_0} \stepsize_i
  ) D_1^2$.

For the second result, we prove only the case that $\beta > 0$,
as the other case is similar. Note that
$\sum_{i=1}^k \stepsize_i \gtrsim k^{1 - \beta}$, so for any $\epsilon >
0$ we have for constants $0 < c, C < \infty$ depending on $\stepsize_0$,
$\beta$, $\lambda_0$ and $\lambda_1$ that
\begin{equation*}
  \sum_{k = 1}^\infty \P(D_k > \epsilon \stepsize_k)
  \le \frac{1}{\epsilon} \sum_{k = 1}^\infty
  \exp\left(C - c k^{1 - \beta}
  + \log \frac{1}{\stepsize_k} \right)
  < \infty.
\end{equation*}
The Borel Cantelli lemma
implies that $D_k / \stepsize_k \cas 0$. The
first inequality~\eqref{eqn:simple-sharp-recurse} implies that
if $V_k = D_{k+1}^2 / (1 - \lambda_1)^{k+1}$, then
\begin{equation*}
  \E[V_k \mid \mc{F}_{k-1}] \le
  \frac{D_k^2}{(1 - \lambda)^k}
  \max\left\{1, \frac{1 - \stepsize_k / D_k}{1 - \lambda_1}\right\}
  \le \left(1 + \hinge{\frac{1 - \stepsize_k / D_k}{1 - \lambda_1}}\right)
  V_{k-1}.
\end{equation*}
As $\hinge{(1 - \stepsize_k / D_k)/(1 - \lambda_1)} = 0$ eventually
with probability 1 and is $\mc{F}_{k-1}$-measurable,
the Robbins-Siegmund Lemma~\ref{lemma:robbins-siegmund} implies that
$V_k \cas V_\infty$ for some $V_\infty \in \R_+$.

\iftoggle{SIOPT}{%
}{
\subsection{Proof of Lemma~\ref{lemma:expected-kaczmarz-growth}}
\label{sec:proof-expected-kaczmarz-growth}

Let $\Delta = x - x\opt$ for shorthand, noting
that $f_i(x\opt) = 0$, and note that for
any $c > 0$ we have
\begin{align*}
\frac{1}{m} \sum_{i = 1}^m
\min\left\{\stepsize f_i(x),
\frac{f_i(x)^2}{\ltwo{f_i'(x)}^2}\right\}
& \ge
\frac{1}{m} \sum_{i=1}^m
\min\left\{\stepsize |\<a_i, \Delta\>|,
\frac{|\<a_i, \Delta\>|^2}{\ltwo{a_i}^2}\right\} \\
& \ge \frac{1}{m} \sum_{i = 1}^m c \ltwo{\Delta}
\indic{|\<a_i, \Delta\>| \ge c \ltwo{\Delta}}
\min\left\{\stepsize, \frac{c \ltwo{\Delta}}{\lipobj^2}\right\},
\end{align*}
where the final inequality uses that $\ltwo{a_i} \le \lipobj$ by assumption.
Now, the $a_i \in \R^n$ are independent from some distribution on $\R^n$
satisfying $\P(|\<a_i, \Delta\>| \ge c \ltwo{\Delta}) \ge p_c > 0$ by
assumption.  Using that the VC-dimension of linear functions $v \mapsto
\<a_i, v\>$ is $O(n)$ a standard VC argument~\cite[Ch.~2.6]{VanDerVaartWe96}
implies that for a numerical constant $C < \infty$, for any $t \ge 0$ with
probability at least $1 - e^{-t}$ over the random draw of the $a_i$, we have
\begin{equation*}
\frac{1}{m} \sum_{i = 1}^m \indic{|\<a_i, \Delta\>| \ge c \ltwo{\Delta}}
\ge p_c - C \sqrt{\frac{n + t}{m}}
\end{equation*}
simultaneously for all $\Delta \in \R^n$.
This implies the lemma.
}

\iftoggle{SIOPT}{
  \section{Proof of Lemma~\ref{lemma:prox-point-strong-convexity}}
  \label{sec:proof-prox-point-strong-convexity}
}{
  \section{Proofs of results from Section~\ref{sec:everything-is-fine}}
}

\iftoggle{SIOPT}{%
}{%
  \subsection{Proof of Proposition~\ref{proposition:convex-lipschitz}}
  \label{sec:proof-convex-lipschitz}
  
  The proposition follows nearly directly from
  Lemma~\ref{lemma:single-recentered-progress}.  Indeed, applying that lemma,
  for any $x\opt \in \mc{X}\opt$ we have
  \begin{equation*}
    \half \E[\ltwo{x_{k + 1} - x\opt}^2 \mid \mc{F}_{k-1}]
    \le \half \ltwo{x_k - x\opt}^2
    - \stepsize_k [F(x_k) - F(x\opt)]
    + \frac{\stepsize_k^2}{2} \lipobj^2,
  \end{equation*}
  where we have used Assumption~\ref{assumption:lipschitz} and
  that $x_{k+1} \in \mc{F}_{k-1}$. Rearranging,
  we have
  \begin{equation*}
    \stepsize_k \E[F(x_k) - F(x\opt)]
    \le \half \E\left[\ltwo{x_k - x\opt}^2 - \ltwo{x_{k+1} - x\opt}^2\right]
    + \frac{\stepsize_k^2}{2} \lipobj^2.
  \end{equation*}
  Summing and telescoping yields $\sum_{i = 1}^k \stepsize_i \E[F(x_i) -
    F(x\opt)] \le \half \ltwo{x_1 - x\opt}^2 + \half \sum_{i = 1}^k
  \stepsize_i^2 \lipobj^2$, and dividing by $\sum_{i = 1}^k \stepsize_i$ and
  using convexity gives the first result of the proposition.
  
  For the second result, we rearrange the first display above
  to see that
  \begin{equation*}
    \E[F(x_k) - F(x\opt)]
    \le \frac{1}{2 \stepsize_k}
    \E[\ltwo{x_k - x\opt}^2 - \ltwo{x_{k + 1} - x\opt}^2]
    + \frac{\stepsize_k}{2} \lipobj^2.
  \end{equation*}
  Summing this quantity, we obtain
  \begin{equation*}
    \sum_{i = 1}^k \E[F(x_i) - F(x\opt)]
    \le \sum_{i = 2}^k \left(\frac{1}{2 \stepsize_i}
    - \frac{1}{2 \stepsize_{i - 1}}\right) \E[\ltwo{x_i - x\opt}^2]
    + \frac{1}{2 \stepsize_1} \ltwo{x_1 - x\opt}^2
    + \frac{\lipobj^2}{2} \sum_{i = 1}^k \stepsize_i.
  \end{equation*}
  Noting that $\E[\ltwo{x_i - x\opt}^2] \le R^2$ by assumption,
  dividing by $k$ and applying Jensen's inequality
  to $F(\wb{x}_k) \le \frac{1}{k} \sum_{i = 1}^k F(x_i)$ gives the result.
}

\iftoggle{SIOPT}{}{
  \subsection{Proof of Lemma~\ref{lemma:prox-point-strong-convexity}}
  \label{sec:proof-prox-point-strong-convexity}
}

Let $\Sigma_k = \Sigma(\statrv_k)$, so that
for all $g_k \in \partial f(x_{k+1};\statrv_k)$ and $y \in \mc{X}$
we have
\begin{equation*}
  f(y; \statrv_k) \ge
  f(x_{k+1}; \statrv_k) + \<g_k, y - x_{k+1}\>
  + \half (y - x_{k+1})^T \Sigma_k (y - x_{k+1}).
\end{equation*}
Using this inequality in place of the last step of the proof of
Lemma~\ref{lemma:single-step-progress} yields
\begin{align}
  \label{eqn:single-step-strongly-convex}
  \lefteqn{\half\ltwo{x_{k + 1} - x\opt}^2
    + \frac{\stepsize_k}{2}
    (x_{k + 1} - x\opt)^T \Sigma_k (x_{k + 1} - x\opt)} \\
  & \le \half \ltwo{x_k - x\opt}^2
  - \stepsize_k \left[f(x_{k + 1}; \statrv_k)
    - f(x\opt; \statrv_k)\right]
  - \half \ltwo{x_k - x_{k+1}}^2 \nonumber.
\end{align}
Using that
\begin{align*}
  \lefteqn{\half (x_{k + 1} - x\opt)^T \Sigma_k (x_{k + 1} - x\opt)} \\
  & = \half (x_k - x\opt)^T \Sigma_k (x_k - x\opt)
  + \half (x_{k + 1} - x_k)^T \Sigma_k (x_{k+1} - x_k)
  + (x_{k + 1} - x_k)^T \Sigma_k (x_k - x\opt) \\
  & \ge \frac{1 - \eta}{2} (x_k - x\opt)^T \Sigma_k (x_k - x\opt)
  + \frac{\eta - 1}{2 \eta} (x_{k + 1} - x_k)^T \Sigma_k (x_{k+1} - x_k)
\end{align*}
for all $\eta > 0$, where the inequality follows from Young's inequality.
Inequality~\eqref{eqn:single-step-strongly-convex} then implies
\begin{align*}
  \half \ltwo{x_{k + 1} - x\opt}^2
  & \le \half (x_k - x\opt)^T \left(I - \stepsize_k (1 - \eta) \Sigma_k\right)
  (x_k - x\opt)
  - \stepsize_k \left[f(x_{k + 1}; \statrv_k)
    - f(x\opt; \statrv_k)\right] \\
  & \qquad ~ - \frac{1}{2} (x_k - x_{k+1})^T \left(I +
  \frac{\stepsize_k (\eta - 1)}{\eta} \Sigma_k\right)
  (x_k - x_{k + 1})
\end{align*}
for all $\eta > 0$.
The choice
$\eta_k = \frac{2 \stepsize_k \lambdamax(\Sigma_k)}{
  1 + 2 \stepsize_k\lambdamax(\Sigma_k)}
\in (0, 1)$ is sufficient to guarantee
$I + \frac{\stepsize_k (\eta_k - 1)}{\eta_k} \Sigma_k \succeq \half I$.
%
%
Substituting this choice of $\eta_k$ above, we obtain that
\begin{align*}
  \half \ltwo{x_{k + 1} - x\opt}^2
  & \le \half (x_k - x\opt)^T \left(I - \stepsize_k
  (1 - \eta_k) \Sigma_k\right)
  (x_k - x\opt) \\
  & \qquad ~ - \stepsize_k \left[f(x_{k + 1}; \statrv_k)
    - f(x\opt; \statrv_k)\right]
  - \frac{1}{4} \ltwo{x_k - x_{k+1}}^2.
\end{align*}
Applying Lemma~\ref{lemma:conv-bound} with 
$y = x\opt$, $x_1 = x_{k + 1}$, $x_0 = x_k$, $\beta = 2 \stepsize_k$,
and $g(\cdot) = f(\cdot; \statrv_k)$
implies
\begin{align*}
  \lefteqn{\half \ltwo{x_{k+1} - x\opt}^2} \\
  & \le (x_k - x\opt)^T \left(I - \stepsize_k (1 - \eta_k) \Sigma_k\right)
  (x_k - x\opt)
  + 2 \stepsize_k\<f'(x\opt; \statrv_k), x\opt - x_k\>
  + \stepsize_k^2 \ltwo{f'(x\opt; \statrv_k)}^2.
\end{align*}
Taking expectations and using
that $\E[\<f'(x\opt; \statrv), x\opt - x\>] \le 0$ for all $x$
(as in the proof of Theorem~\ref{theorem:prox-point-bounded})
gives the desired result.

\iftoggle{SIOPT}{
  {\small
    \setlength{\bibsep}{3pt}
    \bibliography{bib}
    \bibliographystyle{abbrvnat}
  }
}{
  \setlength{\bibsep}{3pt}
  \bibliography{bib}
  \bibliographystyle{abbrvnat}
}

\end{document}